\documentclass[onefignum,onetabnum]{siamart220329}


\usepackage{lipsum}
\usepackage{amsfonts}
\usepackage{amssymb}
\usepackage{graphicx}
\usepackage{epstopdf}
\usepackage{algpseudocode}
\usepackage{tensor}
\usepackage{verbatim}
\usepackage{mathtools}
\usepackage{cleveref}
\usepackage{framed}
\usepackage{tikz}
\usetikzlibrary{calc, shapes, angles, quotes, patterns, decorations.pathreplacing, cd}
\usepackage{subcaption}
\usepackage{bm}
\usepackage{esint}
\usepackage{dsfont}

\ifpdf
\DeclareGraphicsExtensions{.eps,.pdf,.png,.jpg}
\else
\DeclareGraphicsExtensions{.eps}
\fi

\newlength{\leftstackrelawd}
\newlength{\leftstackrelbwd}
\def\leftstackrel#1#2{\settowidth{\leftstackrelawd}%
	{${{}^{#1}}$}\settowidth{\leftstackrelbwd}{$#2$}%
	\addtolength{\leftstackrelawd}{-\leftstackrelbwd}%
	\leavevmode\ifthenelse{\lengthtest{\leftstackrelawd>0pt}}%
	{\kern-.5\leftstackrelawd}{}\mathrel{\mathop{#2}\limits^{#1}}}


\newcommand{\unitint}[0]{I}
\newcommand{\bdd}[1]{ \boldsymbol{#1} }
\newcommand{\unitvec}[1]{\bdd{#1}}

\newcommand{\vertiii}[1]{{\left\vert\kern-0.25ex\left\vert\kern-0.25ex\left\vert #1 
		\right\vert\kern-0.25ex\right\vert\kern-0.25ex\right\vert}}
\newcommand{\leftnorm}[2]{\tensor*[_L]{ \|#1\| }{_{#2}} }

\newcommand{\leftnormsup}[3]{\tensor*[_L]{ \|#1\| }{_{#2}^{#3} } }

\newcommand{\zznorm}[2]{\tensor*[_{00}]{ \|#1\| }{_{#2}} }

\newcommand{\zznormsup}[3]{\tensor*[_{00}]{ \|#1\| }{_{#2}^{#3} } }


\newsiamremark{remark}{Remark}
\newsiamremark{hypothesis}{Hypothesis}
\crefname{hypothesis}{Hypothesis}{Hypotheses}
\newsiamthm{claim}{Claim}

\headers{Stable Lifting of Polynomial Traces on Triangles}{C. Parker and E. S\"{u}li}

\title{Stable Lifting of Polynomial Traces on Triangles \thanks{Submitted to the editors DATE. \funding{The first author acknowledges that this material is based upon work supported by the National Science Foundation under Award No. DMS-2201487. }}}

\author{Charles Parker\thanks{ Mathematical Institute, University of Oxford, Andrew Wiles Building, Woodstock Road, Oxford OX2 6GG, UK  (\email{charles.parker@maths.ox.ac.uk}, \email{suli@maths.ox.ac.uk})} \and Endre S\"{u}li\footnotemark[2] }

\usepackage{amsopn}


\ifpdf
\hypersetup{
  pdftitle={Stable Lifting of Polynomial Traces on Triangles},
  pdfauthor={C. Parker and E. S\"{u}li}
}
\fi




\begin{document}

\maketitle

\begin{abstract}		
	We construct a right inverse of the trace operator $u \mapsto (u|_{\partial T}, \partial_n u|_{\partial T})$ on the reference triangle $T$ that maps suitable piecewise polynomial data on $\partial T$ into polynomials of the same degree and is bounded in all $W^{s, q}(T)$ norms with $1 < q <\infty$ and $ {s \geq 2}$. The analysis relies on new stability estimates for three classes of single edge operators. We then generalize the construction for $m$th-order normal derivatives, $m \in \mathbb{N}_0$.
\end{abstract}

\begin{keywords}
trace lifting, polynomial extension, polynomial lifting
\end{keywords}

\begin{AMS}
	46E35, 65N30
\end{AMS}

\section{Introduction}
\label{sec:intro}

The lifting of polynomial traces defined on the boundary of a triangle $T$ to a function defined over the entire triangle $T$ plays an essential role in the numerical analysis of high order finite element and spectral element discretizations of partial differential equations (PDEs). One of the earliest and perhaps most widely used lifting operators was constructed by Babu\v{s}ka \& Suri \cite{BabSuri87} and later improved upon by Babu\v{s}ka et al. \cite{BCMP91}. The operator maps $H^{\frac{1}{2}}(\partial T)$ boundedly into $H^1(T)$ \textit{and} if the boundary datum is a continuous piecewise polynomial, then the lifting is also a polynomial of the same degree. In the context of second-order elliptic problems, this operator is used in the convergence analysis of the $hp$-finite element methods (FEM) to obtain optimal convergence rates e.g. \cite{BabSuri87,GuoBab10} and in the analysis of substructuring preconditioners e.g. \cite{AinCP21LEP,AinCP19StokesIII,BCMP91,SchMelPechZag08}. 3D analogues by Belgacem \cite{Belgacem} on the cube and Mu\`{n}oz-Sola \cite{Munoz97} on the tetrahedron have similarly been used in a priori error analysis. Some generalizations of the operator in \cite{BCMP91} with stability in $L^q(T)$ based Sobolev spaces were constructed in \cite{Melenk05} with applications to $hp$ quasi-interpolation operators. 

A plethora of other lifting operators have since been constructed. In the analysis of spectral element methods and polynomial inverse inequalities, extension operators bounded in weighted Sobolev spaces on squares and cubes play a key role; see e.g. \cite{Bern95,Bern07,Bern10,Bern89} and references therein. The lifting operators in \cite{Dem08,Dem09,Dem12} satisfy a commuting diagram property with the de Rham complex and arise in the analysis of high-order mixed methods for electromagnetic problems. More recently, $H^2(T)$-stable lifting operators were constructed in \cite{AinCP19Extension,Lederer18} and used to prove uniform $hp$ inf-sup stability for $H(\mathrm{div})$ elements \cite{Lederer18} and $H^1$ elements \cite{AinCP19StokesI} for Stokes flow, as well as optimal $H^2$ convergence rates for $C^1$ finite elements \cite{AinCP19StokesII}. The above list is by no means exhaustive, but demonstrates the ubiquity of polynomial lifting operators.

Currently available lifting operators are not sufficient for all applications. For example, the $p$-biharmonic equation, which appears in image denoising \cite{Houichet21}, and the stream function formulation of 2D incompressible flow of a power-law fluid \cite{DAlessio96} lead to nonlinear fourth-order PDEs posed in $W^{2, q}$. Consequently, a $W^{2, q}(T)$-stable polynomial lifting operator for the trace and normal derivative would be crucial for optimal a priori error estimates of conforming $C^1$ finite element discretizations. Additionally, the need for a polynomial lifting of the trace and normal derivative boundedly into $H^{3}(T)$ is encountered in the analysis of a high-order mixed finite element method for linear elasticity \cite{AznaranHuParker23}. Finally, the modeling of phase field crystal models \cite{Backofen07} and the evolution of a thin film \cite{Wise04}, among other applications, give rise to sixth-order PDEs. The analysis of $C^2$-conforming finite element methods would require an $H^3(T)$-stable lifting of a polynomial trace, normal derivative, and second-order normal derivative. 

The main contribution of this paper is the construction of lifting operators that are simultaneously stable in all appropriate $W^{s, q}(T)$ norms, lift compatible polynomial traces to polynomials, and apply to each of the applications above. 

We first consider the problem of lifting a trace $f$ and normal derivative $g$ into general $W^{s, q}(T)$ spaces, where $1 < q < \infty$ and the regularity ${s \geq 2}$ can be arbitrarily large. In particular, we construct a single operator $\tilde{\mathcal{L}}$ independent of $s$ and $q$ satisfying $\tilde{\mathcal{L}}(f, g)|_{\partial T} = f$, $\partial_n \tilde{\mathcal{L}}(f, g)|_{\partial T} = g$, and $\tilde{\mathcal{L}}$ is bounded from an appropriate boundary norm into $W^{s, q}(T)$. Additionally, if $f$ and $g$ are piecewise polynomials and satisfy certain compatibility conditions, then $\tilde{\mathcal{L}}(f, g)$ is a polynomial. We then construct lifting operators for the generalization of the above problem to $m$th-order normal derivative traces, $m \in \mathbb{N}_0$. The existence of a lifting operator satisfying the conditions in \cite{AinCP19Extension}, \cite{BCMP91}, and \cite{Melenk05}, respectively, follows from our results by taking $(m, s, q) = (1, 2, 2)$, $(m, s, q) = (0, 1, 2)$, and $(m, s, q) = (0, 1, q)$.

The remainder of the paper is organized as follows. In \cref{sec:trace-review}, we review the regularity of the trace $u|_{\partial T}$ and normal derivative $\partial_n u|_{\partial T}$ for a general $W^{s, q}(T)$ function, $1 < q <\infty$ and ${s \geq 2}$. We state in \cref{sec:main-result} the first main result concerning the existence of $\tilde{\mathcal{L}}$ satisfying the properties above. The construction of the operator $\tilde{\mathcal{L}}$, which consists of three families of single edge lifting operators detailed in \cref{sec:single-edge-defs}, is explicitly given in \cref{sec:constructing-lifting}. In \cref{sec:single-edge-stability}, we prove the continuity properties of the single edge operators. Finally, we generalize our construction to arbitrary order normal derivatives in \cref{sec:generalization}. 

\section{The first two traces of $W^{s, q}(T)$ functions}
\label{sec:trace-review}

	Let $T$ denote the reference triangle as depicted in \cref{fig:reference triangle}, and let $u \in W^{s, q}(T)$, where $1 < q < \infty$ and ${s \geq 1}$ are real numbers. In this section, we review the regularity properties of the trace $u|_{\partial T}$ and, when well-defined, the normal derivative $\partial_n u|_{\partial T}$, collecting results from \cite{Arn88,Grisvard85,Jerison95,Jonsson84}.

	We first define some notation. Given an open set $\mathcal{O} \subseteq \mathbb{R}^d$ with Lipschitz boundary, let $W^{k, q}(\mathcal{O})$, $k \in \mathbb{N}_0$, $q \in [1, \infty)$ denote the usual Sobolev spaces \cite{Adams03} equipped with the norm
	\begin{align*}
		\|v\|_{k, q, \mathcal{O}}^q := \sum_{ |\alpha| \leq k } \int_{\mathcal{O}} |D^{\alpha} v(x)|^q  \ dx,
	\end{align*}
	with the usual modification for $q = \infty$. We collect the $j$th-order derivatives into one $j$th-order tensor given by
	\begin{align*}
		(D^j u)_{i_1 i_2 \ldots i_j} = \partial_{i_1} \partial_{i_2} \cdots \partial_{i_j} u.
	\end{align*}
	For $k = 0$, $W^{0, q}(\mathcal{O}) = L^q(\mathcal{O})$, and we use the notation $\|\cdot\|_{q, \mathcal{O}}$ to denote the norm. For $k \in \mathbb{N}_0$ and real $\beta \in (0, 1)$ let $W^{k+\beta, q}(\mathcal{O})$ denote the standard fractional Sobolev-Slobodeckij space \cite{Adams03} with norm
	\begin{align*}
		\|v\|_{k+\beta, q, \mathcal{O}}^q := \|v\|_{k,q, \mathcal{O}}^q + \sum_{|\alpha| = k} \iint_{\mathcal{O} \times \mathcal{O}} \frac{ | D^{\alpha} v(x) -  D^{\alpha} v(y)|^q }{ |x-y|^{\beta q + d}  } \ dx \ dy.
	\end{align*}
	The space $W^{\beta, q}(\Gamma)$ for a $d-1$-dimensional subset $\Gamma \subseteq \partial \mathcal{O}$ is defined analogously (see e.g. \cite[\S 1.3.3]{Grisvard85}) with the norm
	\begin{align*}
		\|v\|_{\beta, q, \Gamma}^q := \|v\|_{q, \Gamma}^q +  \iint_{\Gamma \times \Gamma} \frac{ | v(x) -  v(y)|^q }{ |x-y|^{\beta q + d-1}  } \ dx \ dy.
	\end{align*}
	When $\Gamma$ is an edge of a polygon, we additionally define $W^{k+\beta, q}(\Gamma)$, $k \in \mathbb{N}$, as
	\begin{align*}
		W^{k+\beta, q}(\Gamma) := \{ w \in L^q(\Gamma) : \partial_t^j w \in L^q(\Gamma), \ j \in \{1, 2, \ldots, k\}, \text{ and }  \partial_t^k w \in W^{\beta, q}(\Gamma) \},
	\end{align*}
	where $\partial_t$ is the tangential derivative operator on $\Gamma$. The corresponding norm is then
	\begin{align*}
		\|u\|_{k+\beta, q, \Gamma}^q := \| u\|_{k-1, q, \Gamma}^q + \| \partial_t^k u\|_{\beta, q, \Gamma}^q. 
	\end{align*}
	
		\begin{figure}[htb]
		\centering	
		\begin{tikzpicture}[scale=0.7]
			\filldraw (0,0) circle (2pt) 
			node[align=center,below]{}
			-- (4,0) circle (2pt)
			node[align=center,below]{}	
			-- (0,4) circle (2pt) 
			node[align=center,above]{}
			-- (0,0);
			
			\draw (-0.75, 0) node[align=center,below]{$(0,0) = {\bdd{a}}_2$};
			\draw (5, 0) node[align=center,below]{$\bdd{a}_3 = (1, 0)$};
			\draw (0.875, 4) node[align=center,above]{$\bdd{a}_1 = (0, 1)$};
			
			\coordinate (a1) at (0,0);
			\coordinate (a2) at (4,0);
			\coordinate (a3) at (0,4);
			
			\coordinate (e113) at ($(a2)!1/3!(a3)$);
			\coordinate (e123) at ($(a2)!2/3!(a3)$);
			\coordinate (e1231) at ($(a2)!2/3+1/sqrt(32)!(a3)$);
			
			\coordinate (e213) at ($(a3)!1/3!(a1)$);
			\coordinate (e223) at ($(a3)!2/3!(a1)$);
			\coordinate (e2231) at ($(a3)!2/3+1/4!(a1)$);
			
			\coordinate (e313) at ($(a1)!1/3!(a2)$);
			\coordinate (e323) at ($(a1)!2/3!(a2)$);
			\coordinate (e3231) at ($(a1)!2/3+1/4!(a2)$);
			
			\draw ($(e113)+(0.2,0.2)$) node[align=center]{${\gamma}_2$};
			\draw ($(e213)+(0,0)$) node[align=center,left]{${\gamma}_3$};
			\draw ($(e313)+(0,0)$) node[align=center,below]{${\gamma}_1$};
			
			\draw[line width=2, -stealth] (e123) -- (e1231);
			\draw ($($(e123)!0.5!(e1231)$)+(-0.25,-0.25)$)
			node[align=center]{$\unitvec{t}_2$};
			\draw[line width=2, -stealth] (e123) -- ($(e123)!1!-90:(e1231)$);
			\draw ($($(e123)!0.5!-90:(e1231)$)+(0.25,-0.25)$)
			node[align=center]{$\unitvec{n}_2$};
			\draw[line width=2, -stealth] (e223) -- (e2231);
			\draw ($($(e223)!0.5!(e2231)$)+(0,0)$) 
			node[align=center, right]{$\unitvec{t}_3$};
			\draw[line width=2, -stealth] (e223) -- ($(e223)!1!-90:(e2231)$);
			\draw ($($(e223)!0.5!-90:(e2231)$)+(0,0)$) 
			node[align=center, above]{$\unitvec{n}_3$};
			\draw[line width=2, -stealth] (e323) -- (e3231);
			\draw ($($(e323)!0.5!(e3231)$)+(0,0)$) 
			node[align=center, above]{$\unitvec{t}_1$};
			\draw[line width=2, -stealth] (e323) -- ($(e323)!1!-90:(e3231)$);
			\draw ($($(e323)!0.5!-90:(e3231)$)+(0,0)$) 
			node[align=center, left]{$\unitvec{n}_1$};

			\draw (4/3, 4/3) node(T){${T}$};
		\end{tikzpicture}		
		\caption{Reference triangle $T$}
		\label{fig:reference triangle}
	\end{figure}
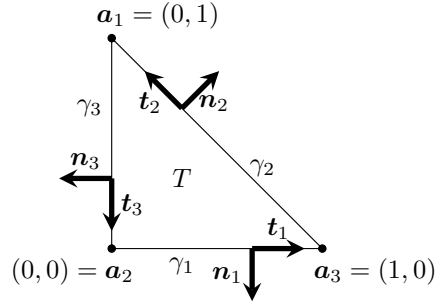
	
	We now return to $u \in W^{s, q}(T)$ with $1 < q < \infty$ and ${s \geq 1}$ and $T$ as in \cref{fig:reference triangle}. Since the boundary of $T$ is not smooth owing to the presence of the corners, the regularity of the trace of $u$ is limited. The primary tool to study its regularity is the standard $W^{\beta, q}(T)$ trace theorem (e.g. \cite[Theorem 3.1]{Jerison95} or \cite[p. 208 Theorem 1]{Jonsson84}): $W^{\beta, q}(T)$ embeds continuously into $W^{\beta-\frac{1}{q}, q}(\partial T)$ for $1/q < \beta < 1 + 1/q$. It will be useful to equip $W^{\beta-\frac{1}{q}, q}(\partial T)$ with the following equivalent norm (cf. \cite[Lemma 1.5.1.8]{Grisvard85} and \cite[p. 171-2]{Arn88}):
	\begin{align*}
		\| f \|_{\beta-\frac{1}{q}, q, \partial T}^q \approx_{\beta, q} \sum_{i=1}^{3}  \|f_i\|_{\beta-\frac{1}{q}, q, \gamma_i}^q + \begin{cases}
			\sum_{i=1}^{3} \mathcal{I}_i^q(f_{i+1}, f_{i+2}) & \text{if } \beta q = 2, \\
			0 & \text{otherwise},
		\end{cases}
	\end{align*}
	where $f_i$ denotes the restriction of $f$ to $\gamma_i$,  $\mathcal{I}_i^q(f, g)$ is defined by the rule
	\begin{align}
		\label{eq:vertex-integral-def}
		\mathcal{I}_i^q(f, g) := \int_{0}^{1} h^{-1}  |f(\bdd{a}_i - h \unitvec{t}_{i+1}) - g(\bdd{a}_i + h \unitvec{t}_{i+2})|^q  \ dh,
	\end{align}
	and indices are understood modulo 3. We use the standard notation $a \lesssim_c b$ to mean $a \leq C b$ where $C$ is a generic constant depending only on $c$, while $a \approx_c b$ means $a \lesssim_c b$ and $b \lesssim_c a$.
	
	Let $s = k + \beta$, where ${k \in \mathbb{N}}$ and $\beta \in [0, 1)$. The $j$th-order derivative tensor satisfies $D^{j} u \in W^{s-j, q}(T) \subset W^{1+\beta, q}(T)$, $0 \leq j \leq k-1$, and $D^k u \in W^{\beta, q}(T)$.  The trace theorem then gives
	\begin{align*}
	 \begin{cases}
			D^j u|_{\partial T} \in L^q(\partial T) & \text{for }  0 \leq j < s-\frac{1}{q}, \\
			D^{k-1} u|_{\partial T} \in W^{\beta+1-\frac{1}{q}, q}(\partial T) & {\text{if }  \beta q < 1}, \\
			D^{k} u|_{\partial T} \in W^{\beta-\frac{1}{q}, q}(\partial T) & \text{if } \beta q > 1.
		\end{cases}
	\end{align*}
	Note that in the final two cases above, the case $\beta q = 1$ is missing. In general, the trace of a $W^{1+\frac{1}{q}, q}(T)$ function does not have a globally defined tangential derivative in $L^q(\partial T)$ (see e.g. Proposition 3.2 \cite{Jerison95} and the subsequent discussion). Moreover, the trace of a $W^{k+\frac{1}{q}, q}(\mathbb{R}^2)$, $k \geq 1$, function on the real line $(-\infty, \infty) \times \{0\}$ belongs to a Besov space which cannot be identified with an integer-order Sobolev space unless $q=2$, in which case the trace belongs to $W^{k, 2}(\mathbb{R})$ (see e.g. \cite[Chapter 7]{Adams03} or \cite[p. 20 Theorem 4]{Jonsson84})\footnote{The case $\beta = 1/q$ and $q \neq 2$ is beyond the scope of this paper.}. Using standard arguments (cf. \cite[Theorem 6.1]{Arn88}), one can show that
	\begin{align}
		\label{eq:review:usual-trace-conditions}
		\begin{cases}
			\|D^j u \|_{q, \gamma_i} < \infty & \text{for } 0 \leq j < s-\frac{1}{q}, \\
			\|D^{k-1} u \|_{\beta + 1 - \frac{1}{q}, q, \gamma_i} < \infty & {\text{if } \beta q < 1}, \\
			\|D^{k} u \|_{\beta-\frac{1}{q}, q, \gamma_i} < \infty & {\text{if } \beta q > 1 \text{ or } (\beta, q) = (\frac{1}{2}, 2)}, \\
			\mathcal{I}_i^q(D^{k} u, D^{k} u) < \infty & \text{if } \beta q= 2. 
		\end{cases}
	\end{align} 
	Thanks to the Sobolev embedding theorem, we augment the above conditions with the following continuity condition: if $u \in W^{s, q}(T)$, then
	\begin{align}
		\label{eq:review:sobolev-embed-cont}
		D^j u \in C(\bar{T}) \quad \text{if } (s-j)q > 2, \ j \in \{0, 1, \ldots, k\}.
	\end{align}
	
	We first focus on the consequences of \cref{eq:review:usual-trace-conditions,eq:review:sobolev-embed-cont} for the trace $u|_{\partial T}$. We may express the $j$th-order tangential derivative of $u$ on $\gamma_l$ in terms of $D^j u$ as follows:
	\begin{align*}
		\partial_t^j u = (D^j u)_{i_1 i_2 \ldots i_j} (\unitvec{t}_l)_{i_1} (\unitvec{t}_l)_{i_2} \cdots (\unitvec{t}_l)_{i_j} \quad \text{on } \gamma_l.
	\end{align*}
	Thanks to \cref{eq:review:usual-trace-conditions}, $u|_{\gamma_i} \in W^{s-\frac{1}{q}, q}(\gamma_i)$, $i \in \{1,2,3\}$, whenever $(s, q) \in \mathcal{A}_0$, where
	\begin{align}
		\label{eq:admissible-set-def}
		\mathcal{A}_m := \left\{ (s, q) \in \mathbb{R}^2 : 1 < q < \infty, \ {s \geq m+1}, \text{ and } s - \frac{1}{q} \notin \mathbb{Z} \text{ if } q \neq 2 \right\}, \quad m \in \mathbb{N}_0.
	\end{align}
	Moreover, if $sq = 2$, then $\mathcal{I}_i^q(u, u) < \infty$, while if $sq > 2$, then \cref{eq:review:sobolev-embed-cont} shows that $u|_{\partial T}$ is continuous. In summary, the $0$th-order trace operator $\sigma^0$ defined by the rule
	\begin{align}
		\label{eq:sigma0-def}
		\sigma^0(f) := f \quad \text{on } \partial T
	\end{align}
	satisfies the following conditions for $f = u|_{\partial T}$ and $(s, q) \in \mathcal{A}_0$:
	\begin{enumerate}
		\item $W^{s-\frac{1}{q}, q}$ regularity on each edge:
		\begin{align}
			\label{eq:review:sigma0-edge-reg}
			\sigma_i^0(f) \in W^{s-\frac{1}{q}, q}(\gamma_i), \qquad i \in \{1,2,3\}.
		\end{align}

		\item Continuity at vertices: For $i \in \{1,2,3\}$, there holds
		\begin{subequations}
			\label{eq:review:sigma0-cont-combo}
				\begin{alignat}{2}
				\label{eq:review:sigma0-cont-1}
				\sigma_{i+1}^0(f)(\bdd{a}_{i}) =  \sigma_{i+2}^0(f)(\bdd{a}_{i}) & \qquad & &\text{if } sq > 2, \\
				\label{eq:review:sigma0-cont-2}
				\mathcal{I}_{i}^{q}(\sigma_{i+1}^0(f), \sigma_{i+2}^0(f)) < \infty& \qquad & & \text{if } sq = 2.
			\end{alignat}
		\end{subequations}
	\end{enumerate}
	
	We now turn the normal derivative $\partial_n u|_{\partial T}$ for $(s, q) \in \mathcal{A}_1$. Following the same arguments as above, we have
	\begin{align*}
		\partial_t^{j} \partial_n  u = (D^{j+1} u)_{i_1 i_2 \ldots, i_{j+1}}  (\unitvec{t}_l)_{i_1} (\unitvec{t}_l)_{i_2} \cdots (\unitvec{t}_l)_{i_j} (\unitvec{n}_l)_{i_{j+1}} \quad \text{on } \gamma_l,
	\end{align*}
	and so $\partial_n u|_{\gamma_l} \in W^{s-1-\frac{1}{q}, q}(\gamma_i)$, $i \in \{1,2,3\}$. However, $\partial_n u$ does not in general have any additional regularity owing to the jumps in the normal vector along $\partial T$. Instead, we turn to the operator $\sigma^1$ defined by the rule
	\begin{align}
		\label{eq:sigma1-def}
		\sigma^1(f, g) := \partial_t \sigma^0(f) \unitvec{t} + g \unitvec{n} = (\partial_t f) \unitvec{t} + g \unitvec{n}  \qquad \text{on } \partial T.
	\end{align} 
	Then, $\sigma^1(u, \partial_n u) = (\partial_t u) \unitvec{t} + (\partial_n u) \unitvec{n} = D u$ on $\partial T$, and so applying the edge regularity \cref{eq:review:usual-trace-conditions} to $\sigma^1$ gives \cref{eq:review:sigma1-edge-reg}. In particular, we recover $\partial_n u|_{\gamma_i} \in W^{s-1-\frac{1}{q}, q}(\gamma_i)$ via the relation $\partial_n u|_{\gamma_i} = \sigma_i^1(u, \partial_n u) \cdot \unitvec{n}_i$. However, we obtain additional conditions: The continuity condition \cref{eq:review:sobolev-embed-cont} gives \cref{eq:review:sigma1-cont}, while the integral condition \cref{eq:review:sigma1-cont-int} follows from \cref{eq:review:usual-trace-conditions}. Furthermore, if $(s-2)q > 2$, then $u \in C^2(\bar{\Omega})$. In particular, the mixed derivative $\partial_{t_{i+1} t_{i+2}} u$ is continuous at each vertex $\bdd{a}_i$, $i \in \{1,2,3\}$, which may be expressed in terms of $\sigma^1(u, \partial_n u)$ as follows:
	\begin{align*}
		\partial_t \sigma_{i+1}^1(u, \partial_n u)(\bdd{a}_i) \cdot \unitvec{t}_{i+2} &= \partial_{t_{i+1}} D u(\bdd{a}_i) \cdot \unitvec{t}_{i+2} = \partial_{t_{i+1} t_{i+2}} u(\bdd{a}_i) \\
		&= \partial_{t_{i+2}} D u(\bdd{a}_i) \cdot \unitvec{t}_{i+1} 
		= \partial_t \sigma_{i+2}^1(u, \partial_n u)(\bdd{a}_i) \cdot \unitvec{t}_{i+1}.
	\end{align*}
	Consequently, we obtain that the additional condition \cref{eq:review:sigma1-deriv} follows from \cref{eq:review:usual-trace-conditions,eq:review:sobolev-embed-cont}. In summary, the traces $f = u|_{\partial T}$ and $g = \partial_n u|_{\partial T}$ satisfy the following for all $(s, q) \in \mathcal{A}_1$:
	\begin{enumerate}
				
		\item $W^{s-1-\frac{1}{q}, q}$ regularity on each edge:
		\begin{align}
			\label{eq:review:sigma1-edge-reg}
			\sigma_i^1(f, g) \in W^{s-1-\frac{1}{q}, q}(\gamma_i) \qquad i \in \{1,2,3\}.
		\end{align}
		
		\item Continuity at vertices: For $i \in \{1,2,3\}$, there holds
		\begin{subequations}
			\label{eq:review:sigma1-cont-combo}
			\begin{alignat}{2}
				\label{eq:review:sigma1-cont}
				\sigma_{i+1}^1(f, g)(\bdd{a}_{i}) =  \sigma_{i+2}^1(f, g)(\bdd{a}_{i}) & \qquad & &\text{if } (s-1)q > 2, \\
				\label{eq:review:sigma1-cont-int}
				\mathcal{I}_{i}^{q}(\sigma_{i+1}^1(f, g), \sigma_{i+2}^1(f, g)) < \infty& \qquad & & \text{if } (s-1)q = 2.
			\end{alignat}
		\end{subequations}
		
		\item Higher derivative continuity at vertices: For $i \in \{1,2,3\}$, there holds
		\begin{subequations}
			\label{eq:review:sigma1-deriv}
			\begin{alignat}{2}
				\label{eq:review:sigma1-deriv-cont}
				\unitvec{t}_{i+2} \cdot \partial_t \sigma_{i+1}^1(f, g)(\bdd{a}_{i})  = \unitvec{t}_{i+1} \cdot \partial_t \sigma_{i+2}^1(f, g)(\bdd{a}_{i}) & \qquad & &\text{if } (s-2)q > 2, \\
				\label{eq:review:sigma1-deriv-cont-int}
				\mathcal{I}_{i}^{q}(\unitvec{t}_{i+2} \cdot \partial_t \sigma_{i+1}^1(f, g) , \unitvec{t}_{i+1} \cdot \partial_t \sigma_{i+2}^1(f, g)) < \infty& \qquad & & \text{if } (s-2)q = 2.
			\end{alignat}
		\end{subequations}
	\end{enumerate}
	 
	 Motivated by the above conditions, we define the space $X^{s,q}(\partial T)$, for $(s, q) \in \mathcal{A}_1$ as follows:
	 \begin{align}
	 	\label{eq:trace-1-space-def}
	 	X^{s,q}(\partial T) &:= \{ (f, g) \in L^q(T)^2 : \text{$(f, g)$ satisfy \cref{eq:review:sigma0-edge-reg}, \cref{eq:review:sigma0-cont-1}, and \cref{eq:review:sigma1-edge-reg,eq:review:sigma1-cont-combo,eq:review:sigma1-deriv}} \},
	 \end{align}
 	equipped with the norm
 	\begin{multline*}
	 	\| (f, g) \|_{X^{s,q}, \partial T}^q := \sum_{i=1}^{3} \left\{  \| f_i \|_{s-\frac{1}{q}, q, \gamma_i}^q + \| g_i \|_{s-1-\frac{1}{q}, q, \gamma_i}^q \right\} \\
	 	 + \sum_{i=1}^{3}  \begin{cases}
	 		 \mathcal{I}_i^q( \sigma_{i+1}(f, g), \sigma_{i+2}(f, g) ) & \text{if } (s-1)q = 2, \\
	 		\mathcal{I}_i^q( \unitvec{t}_{i+2} \cdot \partial_t \sigma_{i+1}(f, g), \unitvec{t}_{i+1} \cdot \partial_t \sigma_{i+2}(f, g)   ) & \text{if } (s-2)q = 2, \\
	 		0 & \text{otherwise}.
	 	\end{cases}
 	\end{multline*}
 	The preceding discussion then shows that for $u \in W^{s, q}(T)$, $(u|_{\partial T}, \partial_n u|_{\partial T}) \in X^{s, q}(\partial T)$. The following result shows that the converse is also true \cite[Theorem 6.1]{Arn88}.
 	\begin{theorem}
 		For every $(s, q) \in \mathcal{A}_1$, and $u \in W^{s, q}(T)$, there holds
 		\begin{align}
 			\label{eq:trace-thm}
 			(u|_{\partial T}, \partial_n u|_{\partial T}) \in X^{s,q}(\partial T)  \quad \text{with} \quad \|(u, \partial_n u)\|_{X^{s,q}, \partial T} \lesssim_{s, q} \|u\|_{s, q, T}.
 		\end{align}
 		Moreover, there exists a single linear operator $\mathcal{L} : \bigcup_{ (s, q) \in \mathcal{A}_1 } X^{s,q}(\partial T) \to W^{1, 1}(T)$ satisfying the following properties: For all $(s, q) \in \mathcal{A}_1$ and $(f, g) \in X^{s,q}(\partial T)$, $\mathcal{L}(f, g) \in W^{s,q}(T)$ and there holds
 		 \begin{align}
 		 	\label{eq:trace-inverse-prop}
 		 	\mathcal{L} (f, g)|_{\partial T} = f, \quad \partial_n \mathcal{L} (f, g)|_{\partial T} = g, \quad \text{and} \quad \|\mathcal{L} (f, g)\|_{s, q, T} \lesssim_{s, q} \|(f, g)\|_{X^{s,q}, \partial T}.
 		 \end{align}
	\end{theorem}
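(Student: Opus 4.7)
The plan splits into two parts: (i) verifying that $u \mapsto (u|_{\partial T}, \partial_n u|_{\partial T})$ maps $W^{s,q}(T)$ boundedly into $X^{s,q}(\partial T)$, and (ii) constructing a single linear right inverse $\mathcal{L}$ that is bounded on the whole scale simultaneously.

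For (i), essentially all ingredients have been assembled in the preceding discussion. The edge regularity \cref{eq:review:sigma0-edge-reg} and \cref{eq:review:sigma1-edge-reg} follow from applying the fractional trace theorem on each edge separately to $u$ and to $Du \in W^{s-1,q}(T)$; the critical-exponent integral terms in the $X^{s,q}$ norm come from the equivalent boundary norm for $W^{\beta-1/q,q}(\partial T)$ with $\beta q = 2$ recorded after the trace theorem. The pointwise vertex identities \cref{eq:review:sigma0-cont-1}, \cref{eq:review:sigma1-cont}, and \cref{eq:review:sigma1-deriv-cont} come from the Sobolev embedding \cref{eq:review:sobolev-embed-cont} applied at the regularity levels $s$, $s-1$, and $s-2$ respectively. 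Collecting the resulting bounds in $q$-th powers yields the estimate \cref{eq:trace-thm}, with the constant depending only on $(s,q)$ through the constants appearing in the trace and embedding theorems.

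For (ii), I would build $\mathcal{L}$ from three single-edge lifting operators $\mathcal{L}_i$, $i \in \{1,2,3\}$, each of which lifts a trace/normal-derivative pair supported on $\gamma_i$ to a $W^{s,q}(T)$ function whose own trace and normal derivative vanish on the other two edges. Before applying the $\mathcal{L}_i$ I would subtract a \emph{vertex correction}: a low-degree polynomial $V$ on $T$ chosen so that, at each $\bdd{a}_i$, the pair $(V, \partial_n V)$ reproduces the prescribed values of $f$, $\partial_t f$, $g$, and $\partial_t g$ coming from \cref{eq:review:sigma0-cont-combo,eq:review:sigma1-cont-combo,eq:review:sigma1-deriv} (using the tangential/normal change of basis). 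The residual data $(f,g) - (V|_{\partial T}, \partial_n V|_{\partial T})$ then has vanishing vertex values and vanishing mixed derivatives at the vertices, which is precisely the compatibility needed to decompose it as a sum of three edge-localized pairs on which each $\mathcal{L}_i$ acts without coupling. One then sets $\mathcal{L}(f,g) = V + \sum_{i=1}^{3} \mathcal{L}_i(\text{residual on } \gamma_i)$.

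The main obstacle is that a single operator must be bounded $X^{s,q}(\partial T) \to W^{s,q}(T)$ for \emph{every} $(s,q) \in \mathcal{A}_1$, so neither the vertex correction $V$ nor the single-edge operators $\mathcal{L}_i$ may be tuned to a particular $(s,q)$. This forces the $\mathcal{L}_i$ to be constructed by a universal averaging/extension formula (such as an integral against a scaling kernel) whose $W^{s,q}(T)$ stability must be verified across the whole admissible range, and forces one to control the borderline integral terms $\mathcal{I}_i^q$ when $(s-1)q = 2$ or $(s-2)q = 2$ without the luxury of pointwise vertex values. The verification of these uniform bounds is precisely the content of the single-edge stability estimates that the paper defers to \cref{sec:single-edge-stability}; the present theorem then follows by combining those estimates with the compatibility reduction above.
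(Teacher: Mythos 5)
Your part (i) is correct and is essentially what the paper does: the displayed result is quoted from \cite[Theorem~6.1]{Arn88}, and the membership $(u|_{\partial T},\partial_n u|_{\partial T})\in X^{s,q}(\partial T)$ together with the norm bound is exactly the content of the discussion preceding the theorem (edge regularity from the trace theorem applied to $u$ and $Du$, the $\mathcal{I}_i^q$ terms from the equivalent $W^{\beta-1/q,q}(\partial T)$ norm at $\beta q=2$, and the vertex identities from Sobolev embedding). For part (ii) the paper offers no proof at all---it is a citation---so any construction you give is necessarily a different route from the paper's.

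That said, your part (ii) has a concrete gap. The vertex correction $V$ is required to reproduce the values of $g$, $\partial_t f$, and $\partial_t g$ at each vertex $\bdd{a}_i$, but for $(s,q)\in\mathcal{A}_1$ with $(s-1)q=2$ (e.g.\ the basic case $s=q=2$) the pair $\sigma^1(f,g)$ has no well-defined vertex values: membership in $X^{s,q}(\partial T)$ only provides the integral condition \cref{eq:review:sigma1-cont-int}, and similarly \cref{eq:review:sigma1-deriv-cont-int} when $(s-2)q=2$. Since $\mathcal{L}$ must be a \emph{single} operator working for all $(s,q)\in\mathcal{A}_1$ simultaneously, you cannot define $V$ by point evaluation at the vertices; the scheme breaks down precisely at these borderline exponents, which you name as ``the main obstacle'' but do not resolve. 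Your appeal to \cref{sec:single-edge-stability} does not close this, because the estimates there concern the operators $\mathcal{E}_m$, $\mathcal{M}_{m,r}$, $\mathcal{S}_{m,r}$ used in a \emph{sequential} edge-by-edge correction (each step subtracting the trace of the previous lifting, with the residual landing in the weighted spaces $W_L$ and $W_{00}$ whose definitions encode vertex compatibility through weighted $L^q$ conditions rather than pointwise values); they are not stability estimates for a ``vertex correction plus three decoupled edge liftings'' decomposition. The cleanest self-contained proof of the second assertion available from the paper is simply to invoke \cref{thm:tilde-l1-existence}: the operator $\tilde{\mathcal{L}}$ constructed there satisfies every property demanded of $\mathcal{L}$, and its construction uses only the forward trace estimate \cref{eq:trace-thm} (your part (i)), not the lifting, so there is no circularity.
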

	In other words, there exists a single lifting operator of the trace and normal derivative that is stable from $X^{s,q}(\partial T)$ to $W^{s, q}(T)$ for all $(s, q) \in \mathcal{A}_1$.

	\section{Statement of the first main result}
	\label{sec:main-result}
	
	The present work constructs another lifting operator $\tilde{\mathcal{L}}$ satisfying the same interpolation and continuity properties as $\mathcal{L}$ of  \cref{eq:trace-inverse-prop}, with the additional property that if $(f, g) \in X^{s, q}(\partial T)$ are suitable piecewise polynomials of degree $p$ and $p-1$, then $\tilde{\mathcal{L}}(f, g)$ is a degree $p$ polynomial. The operators in \cite{Arn88,Grisvard85} do not satisfy this property as, among other reasons, they are constructed by using partition of unity on the boundary $\partial T$. Instead, we seek an alternative construction.
	
	The first issue at hand is to identify the appropriate conditions on $f$ and $g$ that ensure that a polynomial lifting exists. Let $\mathcal{P}_{r}(\mathcal{O})$ denote the set of polynomials of total degree at most $r \geq 0$ on an open set $\mathcal{O}$; for $r < 0$, set $\mathcal{P}_{r}(\mathcal{O}) = \{ 0 \}$. If the lifting of $f$ and $g$ is polynomial $\tilde{\mathcal{L}}(f, g) \in \mathcal{P}_{p}(T)$, then $\tilde{\mathcal{L}}(f, g) \in W^{s, q}(T)$ for all $(s, q) \in \mathcal{A}_1$, and so a necessary condition is that $(f, g)$ satisfy \cref{eq:review:sigma0-cont-1,eq:review:sigma1-cont,eq:review:sigma1-deriv-cont}. The following lemma shows that these conditions are also sufficient for $(f, g) \in X^{s, q}(\partial T)$.
	\begin{lemma}
		Let $f, g : \partial T \to \mathbb{R}$ with $f_i \in \mathcal{P}_p(\gamma_i)$ and $g_i \in \mathcal{P}_{p-1}(\gamma_i)$, $i \in \{1,2,3\}$, for some $p \in \mathbb{N}_0$. Then,  $(f, g) \in X^{s,q}(\partial T)$ for all $(s, q) \in \mathcal{A}_1$ if and only if $(f, g)$ satisfy \cref{eq:review:sigma0-cont-1,eq:review:sigma1-cont,eq:review:sigma1-deriv-cont}.
	\end{lemma}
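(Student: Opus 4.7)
The plan is to treat the two implications separately: the forward direction (necessity) is essentially immediate from specializing $(s,q)$, while the reverse direction (sufficiency) uses the polynomial structure in a mild but essential way to handle the borderline integral conditions. For \emph{necessity}, suppose $(f,g) \in X^{s,q}(\partial T)$ for every $(s,q) \in \mathcal{A}_1$. Then each pointwise condition is recovered by instantiating an admissible pair with the relevant exponent above the critical threshold: taking $(s,q) = (2,2)$ gives $sq = 4 > 2$, yielding \cref{eq:review:sigma0-cont-1}; $(s,q) = (3,2)$ gives $(s-1)q = 4 > 2$, yielding \cref{eq:review:sigma1-cont}; and $(s,q) = (4,2)$ gives $(s-2)q = 4 > 2$, yielding \cref{eq:review:sigma1-deriv-cont}.

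For \emph{sufficiency}, assume the three pointwise compatibility conditions hold and fix an arbitrary $(s,q) \in \mathcal{A}_1$. I verify each ingredient of the definition \cref{eq:trace-1-space-def}. The edge regularity conditions \cref{eq:review:sigma0-edge-reg} and \cref{eq:review:sigma1-edge-reg} are automatic, because $\sigma_i^0(f) = f_i$ and $\sigma_i^1(f,g)|_{\gamma_i} = (\partial_t f_i)\unitvec{t}_i + g_i\unitvec{n}_i$ are polynomial-valued on the bounded edge $\gamma_i$ and hence lie in every fractional Sobolev space there. Condition \cref{eq:review:sigma0-cont-1} is the hypothesis, which is the only case that arises since $s \geq 2$ and $q > 1$ force $sq > 2$ throughout $\mathcal{A}_1$ (so the borderline case $sq = 2$ in \cref{eq:review:sigma0-cont-combo} never occurs). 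When $(s-1)q > 2$ (resp.\ $(s-2)q > 2$), \cref{eq:review:sigma1-cont} (resp.\ \cref{eq:review:sigma1-deriv-cont}) is itself the hypothesis; when $(s-1)q < 2$ (resp.\ $(s-2)q < 2$), no condition is imposed.

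The main point is the borderline integral conditions \cref{eq:review:sigma1-cont-int} and \cref{eq:review:sigma1-deriv-cont-int}, which must be extracted from the pointwise hypotheses alone, and here the polynomial structure is decisive. For \cref{eq:review:sigma1-cont-int} at vertex $\bdd{a}_i$, the restrictions $\sigma_{i+1}^1(f,g)|_{\gamma_{i+1}}$ and $\sigma_{i+2}^1(f,g)|_{\gamma_{i+2}}$ are smooth polynomial-valued expressions that agree at $\bdd{a}_i$ by \cref{eq:review:sigma1-cont}. Lipschitz control on each bounded edge then yields
\[ \bigl| \sigma_{i+1}^1(f,g)(\bdd{a}_i - h\unitvec{t}_{i+1}) - \sigma_{i+2}^1(f,g)(\bdd{a}_i + h\unitvec{t}_{i+2}) \bigr| \lesssim h, \qquad h \in (0,1), \]
so that $\mathcal{I}_i^q(\sigma_{i+1}^1(f,g), \sigma_{i+2}^1(f,g)) \lesssim \int_0^1 h^{q-1}\,dh < \infty$ because $q > 1$. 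The identical argument, applied to the polynomials $\unitvec{t}_{i+2}\cdot\partial_t \sigma_{i+1}^1(f,g)$ and $\unitvec{t}_{i+1}\cdot\partial_t \sigma_{i+2}^1(f,g)$ using \cref{eq:review:sigma1-deriv-cont}, dispenses with \cref{eq:review:sigma1-deriv-cont-int}.

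The only conceptual subtlety to flag is that a generic (non-polynomial) function satisfying only the pointwise vertex compatibility need not satisfy the borderline weighted integrability conditions; it is precisely the Lipschitz regularity inherited from polynomial smoothness that upgrades pointwise matching at $\bdd{a}_i$ to finiteness of $\mathcal{I}_i^q$. I do not foresee any further obstacle beyond verifying the above routine bookkeeping.
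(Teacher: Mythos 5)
Your proposal is correct and follows essentially the same route as the paper's (much terser) proof: sufficiency via smoothness of polynomials for the edge regularity plus the observation that the pointwise vertex conditions imply the borderline integral conditions (your Lipschitz argument is exactly what the paper leaves implicit), and necessity by specializing $(s,q)$. No issues.
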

	\begin{proof}
		Let $(f, g)$ be as in the statement of the lemma and assume that  $(f, g)$ satisfy \cref{eq:review:sigma0-cont-1,eq:review:sigma1-cont,eq:review:sigma1-deriv-cont}. Since polynomials are smooth, $(f, g)$ satisfy \cref{eq:review:sigma0-edge-reg,eq:review:sigma1-edge-reg}, while \cref{eq:review:sigma0-cont-2,eq:review:sigma1-cont-int,eq:review:sigma1-deriv-cont-int} follow from \cref{eq:review:sigma0-cont-1,eq:review:sigma1-cont,eq:review:sigma1-deriv-cont}. Thus, $(f, g) \in X^{s,q}(\partial T)$ for all $(s, q) \in \mathcal{A}_1$. The reverse implication follows by definition.
	\end{proof}
	
	We now state our first main result.
	\begin{theorem}
		\label{thm:tilde-l1-existence}
		There exists a single linear operator
		\begin{align*}
			\tilde{\mathcal{L}} : \bigcup_{ (s, q) \in \mathcal{A}_1 } X^{s,q}(\partial T) \to W^{1, 1}(T)
		\end{align*}
		satisfying the following properties: For all $(s, q) \in \mathcal{A}_1$ and $(f, g) \in X^{s,q}(\partial T)$, $\tilde{\mathcal{L}}(f, g) \in W^{s,q}(T)$ and there holds
		\begin{align}
			\label{eq:tilde-l1-prop}
			\tilde{\mathcal{L}} (f, g)|_{\partial T} = f, \quad \partial_n \tilde{\mathcal{L}} (f, g)|_{\partial T} = g, \quad \text{and} \quad \|\tilde{\mathcal{L}} (f, g)\|_{s, q, T} \lesssim_{s, q} \|(f, g)\|_{X^{s,q}, \partial T}.
		\end{align}
		Moreover, if $f_i \in \mathcal{P}_p(\gamma_i)$, $g_i \in \mathcal{P}_{p-1}(\gamma_i)$, $i \in \{1,2,3\}$, for some $p \in \mathbb{N}_0$, and satisfy \cref{eq:review:sigma0-cont-1,eq:review:sigma1-cont,eq:review:sigma1-deriv-cont}, then $\tilde{\mathcal{L}}(f, g) \in \mathcal{P}_{p}(T)$ and \cref{eq:tilde-l1-prop} holds for all $(s, q) \in \mathcal{A}_1$.
	\end{theorem}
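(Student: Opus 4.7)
The plan is to define $\tilde{\mathcal{L}}$ by an explicit closed form built from the three families of single edge lifting operators of \cref{sec:single-edge-defs}, following the blueprint of \cref{sec:constructing-lifting}. Roughly, one writes
\begin{align*}
\tilde{\mathcal{L}}(f,g) = \sum_{i=1}^3 \mathcal{E}_i^{(0)}(f_i) + \sum_{i=1}^3 \mathcal{E}_i^{(1)}(g_i) - (\text{vertex correction}),
\end{align*}
where $\mathcal{E}_i^{(0)}$ and $\mathcal{E}_i^{(1)}$ lift the trace and normal derivative on a single edge to polynomials of the same degree on $T$, whose spurious traces on the opposite two edges are controlled through a product of barycentric coordinates raised to a sufficient power (so they contribute zero trace and zero normal derivative on those edges). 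The third family of single edge operators handles tangential derivatives at vertices and is used for the vertex correction.

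\textbf{Key steps.} First I would carry out the decomposition of the data at each vertex $\bdd{a}_i$: using the compatibility conditions \cref{eq:review:sigma0-cont-1,eq:review:sigma1-cont,eq:review:sigma1-deriv-cont}, the common pointwise values of $f$, $\partial_t f$, and the mixed tangential derivative supplied by $\sigma^1$ define a polynomial germ at $\bdd{a}_i$; one subtracts from $(f,g)$ explicit polynomial pieces (themselves polynomial liftings of these vertex germs, built from products of barycentric coordinates) so that the residual data vanishes to the appropriate order at each vertex. This is the step that allows the single edge operators to be applied without introducing inconsistency at the corners. Next I would apply each of the three single edge operators to the residual edge data, so that the sum has the correct trace $f$ and normal derivative $g$ on each $\gamma_i$. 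Finally, adding back the vertex polynomial liftings yields $\tilde{\mathcal{L}}(f,g)$. The right-inverse property \cref{eq:tilde-l1-prop} follows from the vanishing properties of the barycentric multipliers on the opposite edges, and polynomial preservation is immediate since every building block is a polynomial of degree at most $p$.

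\textbf{Stability.} To establish the bound $\|\tilde{\mathcal{L}}(f,g)\|_{s,q,T} \lesssim_{s,q} \|(f,g)\|_{X^{s,q},\partial T}$ uniformly in $(s,q) \in \mathcal{A}_1$, I would invoke the stability results for the three families of single edge operators (the content of \cref{sec:single-edge-stability}) together with stable estimates for the vertex polynomial correction. The vertex correction is a finite-dimensional object determined by a bounded number of functionals applied to $(f,g)$, each of which is controlled by $\|(f,g)\|_{X^{s,q},\partial T}$ via trace embeddings on an edge; after subtracting it, the residual edge data lies in the subspaces (with vanishing traces at vertices) on which the single edge operators are stable.

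\textbf{Main obstacle.} The principal difficulty is handling the borderline exponents $(s-1)q = 2$ and $(s-2)q = 2$, where the relevant vertex values of $f$, $\partial_t f$, or $\unitvec{t}\cdot\partial_t \sigma^1$ are \emph{not} individually defined pointwise and the compatibility is only encoded through the integral quantities $\mathcal{I}_i^q$. In these cases the vertex correction cannot be performed by direct pointwise sampling; instead one has to express the correction through a construction that uses only the difference of the two adjacent edge data (the quantity appearing in $\mathcal{I}_i^q$) and show the resulting single-operator construction is still stable. The delicate part is that the \emph{same} operator $\tilde{\mathcal{L}}$ must work for all $(s,q) \in \mathcal{A}_1$ simultaneously, which is why the vertex correction cannot depend on $(s,q)$; reconciling this with the vanishing pointwise values in the borderline cases is where the three classes of single edge operators, and in particular the stability estimates proved in \cref{sec:single-edge-stability}, do the real work.
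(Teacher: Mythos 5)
There is a genuine gap, and it sits exactly where you placed your ``main obstacle.'' Your architecture --- sum the three single-edge liftings and subtract a vertex correction built from polynomial germs obtained by \emph{pointwise sampling} $f$, $\partial_t f$, $g$, and the mixed tangential derivatives at the vertices $\bdd{a}_i$ --- is not well defined on all of $\bigcup_{(s,q)\in\mathcal{A}_1} X^{s,q}(\partial T)$. For $(s,q)$ with $(s-1)q=2$ or $(s-2)q=2$ the relevant vertex values of $\sigma^1(f,g)$ and of $\unitvec{t}\cdot\partial_t\sigma^1(f,g)$ do not exist (only the integral compatibilities \cref{eq:review:sigma1-cont-int,eq:review:sigma1-deriv-cont-int} hold), and since the theorem demands a \emph{single} operator independent of $(s,q)$, you cannot use those sampling functionals at all. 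You acknowledge this and say the correction ``has to be expressed through the difference of the two adjacent edge data,'' but that sentence is the entire content of the hard step; no construction is given, so the proof is incomplete precisely at its core.

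For comparison, the paper avoids vertex germs altogether by a sequential (telescoping) construction: first lift $(f_1,g_1)$ from $\gamma_1$ with $\mathcal{E}_0^{[1]}$ and $\mathcal{E}_1^{[1]}$; then correct the trace and normal derivative on $\gamma_2$ by applying $\mathcal{M}_{0,2}^{[2]}$ and $\mathcal{M}_{1,2}^{[2]}$ to the \emph{differences} $f_2-\tilde{\mathcal{L}}^{[1]}(f,g)|_{\gamma_2}$ and $g_2-\partial_n\mathcal{K}^{[2]}(f,g)|_{\gamma_2}$, which vanish to second order on $\gamma_1$ and hence do not disturb the data already achieved there; finally correct on $\gamma_3$ with $\mathcal{S}_{0,2}^{[3]}$ and $\mathcal{S}_{1,2}^{[3]}$, which vanish on both other edges. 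The compatibility conditions enter only through \cref{lem:trace-diff-left-space} and \cref{lem:trace-diff-zz-space}, which show that these successive differences lie in the weighted spaces $W_L^{\cdot}$ and $W_{00}^{\cdot}$ with norms controlled by $\|(f,g)\|_{X^{s,q},\partial T}$; in the borderline cases the required weighted $L^q$ bounds are extracted from the integrals $\mathcal{I}_i^q$ rather than from pointwise values. That is the mechanism that reconciles ``one operator for all $(s,q)$'' with the degenerate vertex regularity, and it is the piece your proposal still needs to supply.
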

	
	\section{Fundamental single edge operators}
	\label{sec:single-edge-defs}
	
	The construction of the operator $\tilde{\mathcal{L}}$ relies on three families of fundamental operators that lift a function defined on the unit interval $\unitint := (0, 1)$ to the reference triangle $T$. The first family is based on a convolution operator (see e.g. \cite[eq. (4.2)]{Arn88}, \cite{BCMP91}, \cite{Bern95}, \cite[p. 56, eq. (2.1)]{Bern07}, \cite[\S 2.5.5]{Necas11}): Given a nonnegative integer $m \in \mathbb{N}_0$, a smooth compactly supported function $b \in C_c^{\infty}(\unitint)$, and function $f : \unitint \to \mathbb{R}$, we define the operator $\mathcal{E}_m^{[1]}$ formally by the rule
	\begin{align*}
		\mathcal{E}_m^{[1]}(f)(x, y) := \frac{(-y)^m}{m!} \int_\unitint b(t) f(x + ty) \ dt, \qquad (x, y) \in T.
	\end{align*}
	We will use the notation $\mathcal{E}_m^{[1]}[b]$ when we want to make the dependence on $b$ explicit. Identifying $\gamma_1$  with $\unitint$ via the mapping 
	\begin{align}
		\label{eq:edge1-mapping}
		\varphi_1(h) := (1-h) \bdd{a}_2 + h \bdd{a}_3, \qquad h \in \unitint,
	\end{align}
	we use the notation $\mathcal{E}_{m}^{[1]}(f) := \mathcal{E}_m^{[1]}(f \circ \varphi_1)$ for $f : \gamma_1 \to \mathbb{R}$. 
	
	Analogous operators for edges $\gamma_2$ and $\gamma_3$ may be defined by mapping the triangle $T$ onto itself. More specifically, the map $R(x, y) = (1-x-y, x)^T$, takes $T \to T$ by rotating the labels of the vertices and edges in \cref{fig:reference triangle} counter-clockwise, while its inverse $R^{-1}(x, y) = (y, 1-x-y)^T$ corresponds to a clockwise rotation of the labels. For $f : \gamma_2 \to \mathbb{R}$ and $g : \gamma_3 \to \mathbb{R}$, we then define $\mathcal{E}_{m}^{[2]}(f)$ and $\mathcal{E}_{m}^{[3]}(g)$ as follows: 
	\begin{align}
		\label{eq:em1-em2-def}
		\mathcal{E}_m^{[2]}(f) := 2^{\frac{m}{2}} \mathcal{E}_{m}^{[1]}(f \circ R) \circ R^{-1} \quad \text{and} \quad \mathcal{E}_m^{[3]}(g) := \mathcal{E}_{m}^{[1]}(g \circ R^{-1}) \circ R.
	\end{align}	
	The properties of these operators are summarized in the following lemma.
	\begin{lemma}
		\label{lem:em3-derivative-interp-and-stability}
		Let $m \in \mathbb{N}_0$, $b \in C^{\infty}_c(\unitint)$ with $\int_{\unitint} b(t) \ dt = 1$, and $i \in \{1,2,3\}$. For all $(s, q) \in \mathcal{A}_m$ and $f \in W^{s-m-\frac{1}{q}, q}(\gamma_i)$, the lifting $\mathcal{E}_m^{[i]}(f) \in W^{s, q}(T)$, and there holds
		\begin{alignat}{2}
			\label{eq:em3-derivative-interp}
			\partial_{n}^j \mathcal{E}_m^{[i]}(f)|_{\gamma_i} &= f \delta_{jm}, \qquad & & j \in \{ 0, 1, \ldots, m \},
		\end{alignat}	
		and for real $0 \leq \beta \leq s$,
		\begin{align}
			\label{eq:em3-stability}
			\| \mathcal{E}_{m}^{[i]}(f) \|_{\beta, q, T} \lesssim_{b, m, \beta, q} \begin{cases}
				\| d_{i+1}^{m-\beta+\frac{1}{q}} f\|_{\gamma_i} & \text{if } 0 \leq \beta \leq m, \\
				\|f\|_{\beta-m-\frac{1}{q}, q, \gamma_i} & \text{if } {m+1 \leq \beta \leq s}, \ (\beta, q) \in \mathcal{A}_m,
			\end{cases}
		\end{align}
		where $d_{j}$ is the distance to $\bdd{a}_{j}$. If, in addition, $f \in \mathcal{P}_p(\gamma_i)$, $p \in \mathbb{N}_0$, then $\mathcal{E}_m^{[i]}(f) \in \mathcal{P}_{p+m}(T)$.	
	\end{lemma}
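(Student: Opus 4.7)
The plan is to reduce via the symmetries in \cref{eq:em1-em2-def} to the case $i=1$ and then verify each of the three assertions for $\mathcal{E}_m^{[1]}$ by direct manipulation of the defining convolution formula. Since $R$ and $R^{-1}$ are affine bijections of $T$ with unit Jacobian determinant that cyclically permute edges and vertices, composition with them preserves $W^{s,q}(T)$, the edge Sobolev norms $W^{\sigma,q}(\gamma_i)$, and the weighted $L^q$ edge norms up to geometric constants; the factor $2^{m/2}$ in \cref{eq:em1-em2-def} is chosen precisely to compensate for the fact that $R$ is not a Euclidean isometry and so does not transport the normal derivative on $\gamma_1$ directly to the normal derivative on $\gamma_2$ or $\gamma_3$. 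Parametrizing $\gamma_1$ by $\varphi_1(h) = (h,0)$, one has $\partial_n = -\partial_y$ along $\gamma_1$ and $d_{2}(\varphi_1(h)) = h$, so everything can be worked out in the variables $(x,y)$ on $T$.

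The trace identities \cref{eq:em3-derivative-interp} for $i=1$ follow from Leibniz's rule applied to $\mathcal{E}_m^{[1]}(f)(x,y) = \frac{(-y)^m}{m!}\int_\unitint b(t) f(x+ty)\,dt$: every term in $\partial_y^j \mathcal{E}_m^{[1]}(f)$ with $j < m$ retains a factor $y^{m-j}$ and vanishes on $\{y=0\}$, while for $j = m$ only the term in which all derivatives fall on $(-y)^m$ survives at $y=0$, contributing $(-1)^m \int b(t) f(x)\,dt = (-1)^m f(x)$ by $\int b = 1$; combined with $\partial_n = -\partial_y$, this yields $\partial_n^j \mathcal{E}_m^{[1]}(f)|_{\gamma_1} = f\,\delta_{jm}$. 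Polynomial preservation is immediate since $f(x+ty)$ has total degree at most $p$ in $(x,y)$ for each fixed $t$ and multiplication by $y^m$ raises the total degree by exactly $m$.

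The stability bound \cref{eq:em3-stability} is the core of the lemma. For integer $\beta$ with $0 \leq \beta \leq m$ and a multi-index $|\alpha| = \beta$, repeated differentiation combined with integration by parts in $t$ (using $\partial_t f(x+ty) = y\,f'(x+ty)$ to trade factors of $y$ for derivatives of $b$) yields a representation
\begin{align*}
	\partial^\alpha \mathcal{E}_m^{[1]}(f)(x, y) = y^{m-\beta} \sum_j \int_\unitint B_{\alpha,j}(t)\, f(x+ty)\,dt
\end{align*}
with $B_{\alpha,j} \in C_c^\infty(\unitint)$ depending only on $b, m, \alpha$. Applying Jensen (using $\supp b \subset [\epsilon_0, 1-\epsilon_0]$), Fubini, and the substitution $s = x + ty$ at fixed $(y,t)$ converts $\|\partial^\alpha \mathcal{E}_m^{[1]}(f)\|_{L^q(T)}^q$ into $C \int_\unitint |f(s)|^q \int_0^{\min(s/t,\,(1-s)/(1-t))} y^{q(m-\beta)}\,dy\,ds$; estimating $\min(s/t, (1-s)/(1-t)) \leq s/t$ and absorbing the factor $t^{-q(m-\beta)-1}$ against $B_{\alpha,j}$ gives $\lesssim \int_\unitint |f(s)|^q s^{q(m-\beta)+1}\,ds = \|d_2^{m-\beta+1/q} f\|_{L^q(\gamma_1)}^q$. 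The regime $\beta \geq m+1$ is analogous but the integrations by parts now leave residual derivatives of $f$ of order $\beta - m$; combined with a careful estimate of the Slobodeckij seminorm exploiting the half-plane convolution structure of $\mathcal{E}_m^{[1]}$, this produces the trace-space bound $\|f\|_{\beta - m - 1/q, q, \gamma_1}$.

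Fractional $\beta$ is recovered by real interpolation between the integer endpoints: the weighted $L^q$ spaces interpolate (by Stein--Weiss) as $L^q(d_2^{q(m-\beta)+1})$ in the first regime, and the fractional edge Sobolev spaces $W^{\beta-m-1/q,q}(\gamma_1)$ arise as interpolates of consecutive integer-order spaces in the second regime, matching the corresponding interpolation of $W^{\beta,q}(T)$; the restriction $s - 1/q \notin \mathbb{Z}$ in $\mathcal{A}_m$ (for $q \neq 2$) excludes precisely the Besov endpoints at which the edge trace is not identifiable with an integer-order Sobolev space. The principal obstacle is obtaining the sharp weight $d_2^{m-\beta+1/q}$ in the first regime: naively extending the inner $y$-integral over $(0,1)$ yields only $\|f\|_{L^q(\gamma_1)}$ on the right, which is insufficient because $\|d_2^\alpha f\|_{L^q} \leq \|f\|_{L^q}$ for $\alpha > 0$. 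The geometric factor $\min(s/t,(1-s)/(1-t))$ coming from the triangle must instead be retained and estimated asymmetrically in favour of the endpoint $\bdd{a}_{i+1}$; this asymmetry is intrinsic to $\mathcal{E}_m^{[1]}$, whose integrand samples $f$ only on the segment from $(x,0)$ towards $\bdd{a}_3$, and explains why only $d_{i+1}$ (and not $d_{i-1}$) appears in the weighted bound.
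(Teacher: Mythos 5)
Your proposal is correct and follows the same overall architecture as the paper (reduction to $i=1$ via $R$, Leibniz for \cref{eq:em3-derivative-interp}, direct verification of polynomial preservation, integration by parts in $t$ to trade derivatives of the kernel for derivatives of $b$, and real interpolation to fill in fractional orders), but the two stability regimes are handled by genuinely different means. For $0 \leq \beta \leq m$ the paper routes the estimate through the weighted space $L^q(\unitint; t^{(m-\beta)q+1}\,dt)$ and Hardy's inequality (\cref{lem:weighted-lq-e0,lem:emq1-hm-stability}); your Jensen--Fubini--substitution computation with the exact triangle geometry $y < \min\{s/t,(1-s)/(1-t)\}$ reaches the same weight $d_{i+1}^{m-\beta+\frac1q}$ more directly and is, if anything, more elementary --- it also makes transparent the asymmetry remark at the end of your proposal, which the paper leaves implicit. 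For $m+1 \leq \beta \leq s$ the paper does not prove the half-plane estimate itself: it extends $f$ from $\unitint$ to $\mathbb{R}$, inserts a cutoff $\chi(y)$, and cites \cite[Lemma 4.2]{Arn88} for the whole-space bound (\cref{lem:em-wsp-bound}), whereas you propose to reprove that bound by estimating the Slobodeckij seminorm directly and then interpolating between integer orders on $T$. That route can be made to work, but it is where your sketch is thinnest, and one phrase would fail if taken literally: after integrating by parts you cannot leave ``residual derivatives of $f$ of order $\beta-m$'' in an $L^q$ term, since $f \in W^{\beta-m-\frac1q,q}(\gamma_i)$ only guarantees $\partial_t^{j} f \in L^q$ for $j \leq \beta-m-1$; the top-order contribution must be organized as a difference quotient controlled by the $W^{1-\frac1q,q}$ seminorm of $\partial_t^{\beta-m-1}f$ (this is precisely the content of the cited lemma of \cite{Arn88}), and to run that argument one should also extend $f$ to $\mathbb{R}$ with a bounded extension operator, as the paper does, rather than work on $\unitint$ alone. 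With that repair the proposal is sound.
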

	\noindent \Cref{eq:em3-derivative-interp} shows that the function $\mathcal{E}_m^{[i]}(f)$ is a lifting of $f$ from $\gamma_i$ to $T$.
	The proof of \cref{lem:em3-derivative-interp-and-stability}, along with the rest of the results in this section, are postponed until \cref{sec:single-edge-stability}.
	
	\subsection[The first Mu\~{n}oz-Sola operator]{The Mu\~{n}oz-Sola operator $\mathcal{M}_{m, r}$}
	
	We now define a lifting operator motivated by Mu\~{n}oz-Sola \cite[Lemma 6]{Munoz97}. Given $m, r \in \mathbb{N}_0$, $b \in C_c^{\infty}(\unitint)$, and function $f : \unitint \to \mathbb{R}$, we define $\mathcal{M}_{m, r}^{[1]}(f)$ formally by the rule
	\begin{align*}
		\mathcal{M}_{m, r}^{[1]}(f)(x, y) := x^r \mathcal{E}_m^{[1]}(\tau^{-r} f)(x, y) = x^r \frac{(-y)^m}{m!} \int_{\unitint} b(t) \frac{f(x + tx)}{(x + ty)^r} \ dt,  \quad (x, y) \in T.
	\end{align*}
	Here, and in what follows, $\tau$ denotes the function $\tau(t) = t$ for $t \in \unitint$. We again use the notation $\mathcal{M}_{m, r}^{[1]}[b](f)$ and $\mathcal{M}_{m, r}^{[1]}(f) := \mathcal{M}_{m, r}^{[1]}(f \circ \varphi_1)$ for $f : \gamma_1 \to \mathbb{R}$ analogously as above. Loosely speaking, the presence of the term $(x + ty)^{-r}$ in the above expression means that, for $r > 0$, $f(t)$ needs to decay to 0 sufficiently fast at $t = 0$ for $\mathcal{M}_{m, r}^{[1]}(f)$ to have sufficient regularity. To characterize this decay more precisely, we introduce some additional spaces.
	
	Let $i \in \{1,2,3\}$ and let $W_L^{k+\beta, q}(\gamma_i)$, $k \in \mathbb{N}_0$, $0 \leq \beta < 1$, $1 < q < \infty$, denote the subspace of $W^{k+\beta, q}(\gamma_i)$ functions satisfying
	\begin{align}
		\label{eq:left-space-conditions}
		\begin{dcases}
			\partial_t^{j} f(\bdd{a}_{i+1}) = 0 & \text{for } 0 \leq j < k + \beta - \frac{1}{q}, \\
			\| d_{i+1}^{-\frac{1}{q}} \partial_t^k f\|_{q, \gamma_i} < \infty
			& \text{if } \beta q = 1,
		\end{dcases}
	\end{align}
	equipped with the norm
	\begin{align}
		\label{eq:left-norm-def-gamma}
		\leftnormsup{u}{k+\beta, q, \gamma_i}{q} := \|u\|_{k+\beta, q, \gamma_i}^q + \begin{dcases}
			\| d_{i+1}^{-\frac{1}{q}} \partial_t^k f\|_{q, \gamma_i}^q
			& \text{if } \beta q =1, \\
			0 & \text{otherwise},
		\end{dcases}
	\end{align}		
	where $d_j$ is defined in \cref{lem:em3-derivative-interp-and-stability}.
	The weighted spaces $W_L^{k+\beta, q}(\gamma_i)$ are crucial for characterizing the continuity of the operators $\mathcal{M}_{m, r}^{[i]}(f)$, $i \in \{1,2,3\}$, where $\mathcal{M}_{m, r}^{[2]}(f)$ and $\mathcal{M}_{m, r}^{[3]}(f)$ are defined analogously as in \cref{eq:em1-em2-def}, as the following result shows.
	\begin{lemma}
		\label{lem:mmr3-derivative-interp}
		Let $m \in \mathbb{N}_0$, $r \in \mathbb{N}$, $b \in C^{\infty}_c(\unitint)$ with $\int_{\unitint} b(t) \ dt = 1$, and $i \in \{1,2,3\}$. For all $(s, q) \in \mathcal{A}_m$ and $f \in W^{s-m-\frac{1}{q}, q}(\gamma_i) \cap W^{\min\{s-m, r\}-\frac{1}{q}, q}_L(\gamma_i)$, the lifting $\mathcal{M}_{m, r}^{[i]}(f) \in W^{s, q}(T)$, and there holds
		\begin{subequations}
			\begin{alignat}{2}
				\label{eq:mmr3-derivative-interp}
				\partial_{n}^j \mathcal{M}_{m, r}^{[i]}(f)|_{\gamma_i} &= f \delta_{jm}, \qquad & &j \in \{0, 1, \ldots, m\}, \\
				\partial_{n}^l \mathcal{M}_{m, r}^{[i]}(f)|_{\gamma_{i+2}} &= 0, \qquad & &l \in \{0, 1, \ldots, r-1\} \text{ and } (s-l)q > 1,
			\end{alignat}
		\end{subequations}		
		and for real $0 \leq \beta \leq s$, 
		\begin{align}
			\label{eq:mmr3-stability}
			\|\mathcal{M}_{m, r}^{[i]}(f)\|_{\beta, q, T} \lesssim_{b, m, r, \beta, q} \begin{cases}
				\|d_{i+1}^{m-\beta+\frac{1}{q}} f \|_{q, \gamma_i} & \text{if } 0 \leq \beta \leq m, \\
				\leftnorm{f}{\beta-m-\frac{1}{q}, q, \gamma_i} & \text{if } {m+1 \leq \beta \leq m+r}, \ (\beta, q) \in \mathcal{A}_m, \\
				\|f\|_{\beta-m-\frac{1}{q}, q, \gamma_i} & \text{if } m + r < \beta \leq s, \ (\beta, q) \in \mathcal{A}_m.
			\end{cases}
		\end{align}		
		If, additionally, $f \in \mathcal{P}_p(\gamma_i)$, $p \in \mathbb{N}_0$, with $\partial_t^l f(\bdd{a}_1) = 0$ for $l \in \{0, 1, \ldots, r-1\}$, then $\mathcal{M}_{m, r}^{[i]}(f) \in \mathcal{P}_{p+m}(T)$.
	\end{lemma}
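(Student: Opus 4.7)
The plan is to reduce to the case $i=1$, since $\mathcal{M}_{m,r}^{[2]}$ and $\mathcal{M}_{m,r}^{[3]}$ are defined via composition with the affine rotation $R$ (and $R^{-1}$), which maps $T$ to itself, permutes edges and vertices, and preserves $W^{s,q}(T)$ norms up to equivalence constants depending only on $s$ and $q$. Working on $\gamma_1$ via the parametrization $\varphi_1$, the vertex $\bdd{a}_2$ corresponds to $t=0$, the tangential derivative $\partial_t$ to the $t$-derivative, and the distance $d_2$ to the function $t\mapsto t$.

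With the explicit formula $\mathcal{M}_{m,r}^{[1]}(f)(x,y) = x^r\,\frac{(-y)^m}{m!}\int_I b(t)\,f(x+ty)/(x+ty)^r\,dt$, the trace relations are obtained by direct differentiation. On $\gamma_1$, expand $\partial_y^j$ at $y=0$: the $y$-dependence is isolated in $(-y)^m$ together with the smooth function $y\mapsto f(x+ty)/(x+ty)^r$, so Leibniz combined with $\int_I b = 1$ yields $\partial_n^j\mathcal{M}_{m,r}^{[1]}(f)|_{\gamma_1} = f\,\delta_{jm}$ whenever $f/\tau^r$ is continuous at $0$, which is ensured by the left-space hypothesis \cref{eq:left-space-conditions} on $f$. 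On $\gamma_3$, the explicit prefactor $x^r$ kills all $x$-derivatives of order strictly less than $r$. Polynomial preservation is then immediate: if $f\in\mathcal{P}_p(\gamma_1)$ with $\partial_t^l f(\bdd{a}_2) = 0$ for $l<r$, Taylor's theorem gives $f = \tau^r g$ with $g\in\mathcal{P}_{p-r}$, so $\mathcal{E}_m^{[1]}(g)\in\mathcal{P}_{p-r+m}(T)$ by \cref{lem:em3-derivative-interp-and-stability}, and multiplication by $x^r$ produces an element of $\mathcal{P}_{p+m}(T)$.

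The substance is the stability estimate \cref{eq:mmr3-stability}, which I would prove by breaking into three ranges of $\beta$. For $0\leq \beta\leq m$, the uniform bound $|x^r|\leq 1$ on $T$ reduces the task to the weighted $L^q$-type estimate of \cref{lem:em3-derivative-interp-and-stability} applied to $\mathcal{E}_m^{[1]}(\tau^{-r}f)$, where the weight $d_2^{m-\beta+1/q}$ (equal to $t^{m-\beta+1/q}$ on $\gamma_1$) absorbs the factor $\tau^{-r}$. For $m+r<\beta\leq s$, the prefactor $x^r$ is smooth and contributes only bounded multipliers in $W^{\beta,q}(T)$, so \cref{eq:em3-stability} applied to $\mathcal{E}_m^{[1]}(\tau^{-r}f)$ combined with a Hardy inequality (with no boundary vanishing required because $\beta-m>r$) gives the bound in terms of the unweighted norm $\|f\|_{\beta-m-1/q,q,\gamma_1}$.

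The main obstacle is the intermediate regime $m+1\leq \beta\leq m+r$, where the smooth factor $x^r$ and the singular factor $\tau^{-r}$ must exactly cancel in a way tuned to the fractional smoothness $\beta-m-1/q$. The plan is to Leibniz-expand $D^\beta(x^r\mathcal{E}_m^{[1]}(\tau^{-r}f))$ as a sum of terms of the form $c_j\,x^{r-j}\,D^{\beta-j}\mathcal{E}_m^{[1]}(\tau^{-r}f)$ for $0\leq j\leq \lfloor\beta\rfloor$; trade each factor $x^{r-j}$ against the singular behavior of $\tau^{-r}f$ near $t=0$ using weighted Hardy inequalities calibrated so that the vanishing conditions \cref{eq:left-space-conditions} on $f$ convert $\leftnorm{f}{\beta-m-1/q,q,\gamma_1}$ into control of the appropriate weighted norm of $\tau^{-r}f$; and then apply \cref{eq:em3-stability}. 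The borderline case $(\beta-m-1/q)q=1$ requires the extra term in \cref{eq:left-norm-def-gamma}, which is exactly what a Hardy inequality with critical weight demands. Verifying that the Hardy constants and the implicit constants in \cref{eq:em3-stability} assemble cleanly at each fractional $\beta$, and that the fractional Sobolev-Slobodeckij seminorm is also controlled (not just the integer-order derivatives), is the most delicate bookkeeping in the argument.
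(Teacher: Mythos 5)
Your handling of the trace identities, the polynomial preservation, and the reduction to $i=1$ via $R$ matches the paper. The stability estimate \cref{eq:mmr3-stability}, however, is where the substance lies, and your plan for it has genuine gaps in all three regimes. In the low regime $0\leq\beta\leq m$, bounding $|x^r|\leq 1$ and then estimating $\mathcal{E}_m^{[1]}(\tau^{-r}f)$ does not produce the claimed weight: \cref{lem:weighted-lq-e0} applied to $\tau^{-r}f$ yields $\|\tau^{m-\beta+\frac{1}{q}-r}f\|_{q,\unitint}$, which is \emph{stronger} than $\|\tau^{m-\beta+\frac{1}{q}}f\|_{q,\unitint}$ near $t=0$, not weaker. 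The factor $x^r$ must be kept \emph{inside} the integral and cancelled pointwise against $(x+ty)^{-r}$ via $x\leq x+ty$; and when derivatives of order up to $m$ land on $(x+ty)^{-r}$ they worsen the singularity, which the paper repairs by integrating by parts in $t$ to transfer those derivatives onto $b$ (see the proof of \cref{lem:emq1-hm-stability}). In the high regime $\beta>m+r$, your claim that $x^r$ is a harmless multiplier so that \cref{eq:em3-stability} applied to $\mathcal{E}_m^{[1]}(\tau^{-r}f)$ suffices fails quantitatively: Hardy's inequality \cref{eq:fovert-bound} gives $\|\tau^{-r}f\|_{\beta-m-r-\frac{1}{q},q,\unitint}\lesssim\|f\|_{\beta-m-\frac{1}{q},q,\unitint}$, i.e.\ each division by $\tau$ costs a full derivative, so $\mathcal{E}_m^{[1]}(\tau^{-r}f)$ is only controlled in $W^{\beta-r,q}(T)$, not $W^{\beta,q}(T)$. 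No multiplier argument recovers those $r$ orders.

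The missing idea is the mechanism by which the loss from $\tau^{-1}$ is compensated by a gain in the lifting order. The paper's proof rests on the algebraic identity $\mathcal{M}_{m,r}^{[1]}(f)-\mathcal{M}_{m,r+1}^{[1]}(f)=-(m+1)\,\mathcal{M}_{m+1,r}^{[1]}[\tau b](\tau^{-1}f)$, which pairs each factor of $\tau^{-1}$ (one derivative lost via Hardy, \cref{eq:fovert-bound,eq:fovert-left-bound,eq:fovertkp12-bound}) with an increment $m\mapsto m+1$ (one derivative gained, since $\mathcal{E}_{m+1}$ maps $W^{\sigma,q}(\unitint)$ into $W^{\sigma+m+1+\frac{1}{q},q}(T)$). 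Induction on $r$ then yields \cref{eq:proof:mmr1-stability} at the integer-offset orders, and the remaining fractional orders $m+1<\beta<m+2$ are filled in by real interpolation of the weighted spaces $W_L^{\cdot,q}(\unitint)$ (\cref{lem:left-zz-interpolation}), not by a Leibniz expansion of a fractional derivative, which does not literally exist. Your middle-regime paragraph correctly identifies the difficulty (the exact cancellation between $x^r$ and $\tau^{-r}$ tuned to the fractional smoothness, and the critical case $(\beta-m)q=1+q$ forcing the extra term in \cref{eq:left-norm-def-gamma}) but does not supply a mechanism to resolve it; as written the proposal would not assemble into a proof without importing both the recursive identity and the interpolation result for $W_L^{s,q}$.
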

	\noindent In particular, the function $\mathcal{M}_{m, r}^{[i]}(f)$ is a lifting of $f$ with the additional property that the normal derivatives up to order $r-1$ of $\mathcal{M}_{m, r}^{[i]}(f)$ vanish on $\gamma_{i+2}$.
	
	\subsection[The second Mu\~{n}oz-Sola operators]{The Mu\~{n}oz-Sola operator $\mathcal{S}_{m, r}$}
	
	We define one final lifting operator, again inspired by  Mu\~{n}oz-Sola \cite[Lemmas 7 \& 8]{Munoz97}: Let $m$, $r$, $b$, and $f$ be as above and define $\mathcal{S}_{m, r}^{[1]}(f)$ formally by the rule
	\begin{align*}
		\mathcal{S}_{m, r}^{[1]}(f)(x, y) &:= \{ x(1-x-y) \}^r \mathcal{E}_m^{[1]}\left( \frac{f}{\{\tau (1- \tau) \}^r} \right)(x, y) \\
		&= \{ x(1-x-y) \}^r \frac{(-y)^m}{m!} \int_{\unitint} b(t) \left. \frac{f(s)}{\{s(1-s) \}^r} \right|_{s = x + ty} \ dt,  \qquad (x, y) \in T,
	\end{align*}
	and again use the notation $\mathcal{S}_{m, r}^{[1]}[b](f)$ and $\mathcal{S}_{m, r}^{[1]}(f) := \mathcal{S}_{m, r}^{[1]}(f \circ \varphi_3)$ for $f : \gamma_1 \to \mathbb{R}$ analogously as above. Similarly to the operator $\mathcal{M}_{m, r}^{[1]}$, the presence of the term $\{(x + ty)(1-x-ty)\}^{-r}$ in the above expression means that for $r > 0$, $f(t)$ needs to decay to 0 sufficiently fast at $t = 0$ and $t=1$ for $\mathcal{S}_{m, r}^{[1]}(f)$ to have sufficiently regularity. 
	
	Here, the appropriate space to describe this decay is $W_{00}^{k+\beta, q}(\gamma_i)$, $k \in \mathbb{N}_0$, $0 \leq \beta < 1$, $1 < q < \infty$, $i \in \{1,2,3\}$, the subspace of $W^{k+\beta, q}(\gamma_i)$ functions satisfying
	\begin{align}
		\label{eq:zz-space-conditions}
		\begin{dcases}
			\partial_t^{i} f|_{\partial \gamma_i} = 0 & \text{for } 0 \leq i < k + \beta - \frac{1}{q}, \\
			\| (d_{i+1} d_{i+2})^{-\frac{1}{q}} \partial_t^k f\|_{q, \gamma_i} < \infty & \text{if } \beta q = 1,
		\end{dcases}
	\end{align}
	equipped with the norm
	\begin{align*}
		\zznormsup{u}{k+\beta, q, \gamma_i}{q} := \|u\|_{k+\beta, q, \gamma_i}^q + \begin{dcases}
			\| (d_{i+1} d_{i+2})^{-\frac{1}{q}} \partial_t^k f\|_{q, \gamma_i}^q & \text{if } \beta q = 1, \\
			0 & \text{otherwise},
		\end{dcases}
	\end{align*}
	We then have the following result for the operator $\mathcal{S}_{m, r}^{[i]}$, $i \in \{1,2,3\}$, where $\mathcal{S}_{m, r}^{[2]}(f)$ and $\mathcal{S}_{m, r}^{[3]}(f)$ are defined analogously as in \cref{eq:em1-em2-def}.
	\begin{lemma}
		\label{lem:smr3-derivative-interp}
		Let $m \in \mathbb{N}_0$, $r \in \mathbb{N}$, $b \in C^{\infty}_c(\unitint)$ with $\int_{\unitint} b(t) \ dt = 1$, and $i \in \{1,2,3\}$. For all $(s, q) \in \mathcal{A}_m$ and $f \in W^{s-m-\frac{1}{q}, q}(\gamma_i) \cap W^{\min\{s-m, r\}-\frac{1}{q}, q}_{00}(\gamma_i)$, the lifting $\mathcal{S}_{m, r}^{[i]}(f) \in W^{s, q}(T)$, and there holds
		\begin{subequations}
			\label{eq:smr3-derivative-interp}
			\begin{alignat}{2}
				\partial_{n}^j \mathcal{S}_{m, r}^{[i]}(f)|_{\gamma_i} &= f \delta_{jm}, \qquad & & j \in \{0, 1, \ldots, m\}, \\
				\partial_{n}^l \mathcal{S}_{m, r}^{[i]}(f)|_{\gamma_{i+1} \cup \gamma_{i+2}} &= 0, \qquad & & l \in \{0, 1, \ldots, r-1\} \text{ and } (s-l)q>1,
			\end{alignat}
		\end{subequations}		
		and for real $0 \leq \beta \leq m$, 
		\begin{align}
			\label{eq:smr3-stability-low}
			\|\mathcal{S}_{m, r}^{[i]}(f)\|_{\beta, q, T} \lesssim_{b, m, r, \beta, q} 	\|d_{i+1}^{m-\beta+\frac{1}{q}} f\|_{q, \gamma_i} + \|d_{i+2}^{m-\beta+\frac{1}{q}} f\|_{q, \gamma_i}, 
		\end{align}
		while for ${m + 1 \leq \beta \leq s}$,
		\begin{align}
			\label{eq:smr3-stability-high}
			\|\mathcal{S}_{m, r}^{[i]}(f)\|_{\beta, q, T} \lesssim_{b, m, r, \beta, q} \begin{cases}
				\zznorm{f}{\beta-m-\frac{1}{q}, q, \gamma_i} & \text{if } \beta \leq m + r, \ (\beta, q) \in \mathcal{A}_m, \\
				\|f\|_{\beta - m - \frac{1}{q}, q, \gamma_i} & \text{if } m + r < \beta \leq s, \ (\beta, q) \in \mathcal{A}_m.
			\end{cases} 
		\end{align}
		If, additionally, $f \in \mathcal{P}_p(\gamma_i)$, $p \in \mathbb{N}_0$, with $\partial_t^i f|_{\partial \gamma_i} = 0$ for $i \in \{0, 1, \ldots, r-1\}$, then $\mathcal{S}_{m, r}^{[i]}(f) \in \mathcal{P}_{p+m}(T)$.
	\end{lemma}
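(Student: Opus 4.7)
The plan is to reduce everything to the analogous results already stated for $\mathcal{E}_m^{[i]}$ (\cref{lem:em3-derivative-interp-and-stability}) and $\mathcal{M}_{m,r}^{[i]}$ (\cref{lem:mmr3-derivative-interp}), exploiting the fact that $\mathcal{S}_{m,r}^{[i]}$ is structurally a \emph{symmetric} version of $\mathcal{M}_{m,r}^{[i]}$ in which the role played by a single vertex $\bdd{a}_{i+1}$ is now played by both $\bdd{a}_{i+1}$ and $\bdd{a}_{i+2}$. By the rotational construction \cref{eq:em1-em2-def}, I may reduce to $i=1$ throughout.

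First I would verify the interpolation identities \cref{eq:smr3-derivative-interp}. On $\gamma_1$ (where $y=0$), apply Leibniz's rule to $\partial_y^j\bigl[\{x(1-x-y)\}^r \,\mathcal{E}_m^{[1]}(f/\{\tau(1-\tau)\}^r)\bigr]\big|_{y=0}$; since \cref{eq:em3-derivative-interp} tells us $\partial_y^{j-k}\mathcal{E}_m^{[1]}(g)|_{y=0}=g\,\delta_{j-k,m}$, only the $k=0$ term contributes when $j=m$ and produces $\{x(1-x)\}^r\cdot f/\{x(1-x)\}^r=f$. On $\gamma_3$ (respectively $\gamma_2$) the factor $x^r$ (resp. $(1-x-y)^r$) vanishes to order $r$, so any $l<r$ normal derivative is zero provided the second factor is sufficiently smooth near that edge; this is guaranteed by the stability bounds of Steps~2--4 below applied with appropriate $\beta$.

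Next, for the stability \cref{eq:smr3-stability-low} in the low range $0\leq\beta\leq m$, I rewrite
\begin{align*}
\mathcal{S}_{m,r}^{[1]}(f)(x,y)=\frac{(-y)^m}{m!}\int_{\unitint} b(t)\left(\frac{x}{x+ty}\right)^{\!r}\!\!\left(\frac{1-x-y}{1-x-ty}\right)^{\!r}\! f(x+ty)\,dt,
\end{align*}
and observe that for $t\in\supp b\Subset(0,1)$ both ratios are pointwise bounded on $T$, so $\mathcal{S}_{m,r}^{[1]}(f)$ is majorised by the integrand of $\mathcal{E}_m^{[1]}(|f|)$; applying the low-regularity half of \cref{eq:em3-stability} with weights $d_2^{m-\beta+1/q}$ or $d_3^{m-\beta+1/q}$ (depending on which ratio one trivially bounds by $1$) yields the two terms on the right of \cref{eq:smr3-stability-low}. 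For the intermediate range \cref{eq:smr3-stability-high} with $m+1\leq\beta\leq m+r$, I expand $D^\beta \mathcal{S}_{m,r}^{[1]}(f)$ by the Leibniz rule, noting that the polynomial prefactor $\{x(1-x-y)\}^r$ contributes only finitely many nonzero derivatives, each vanishing appropriately near $\gamma_2\cup\gamma_3$. Each resulting term is then estimated by the proof technique of \cref{lem:mmr3-derivative-interp}, the key point being that the decay conditions in \cref{eq:zz-space-conditions} are precisely what is needed for $f/\{\tau(1-\tau)\}^r$ to lie in $W^{\beta-m-r-1/q,q}(\gamma_1)$ so that \cref{eq:em3-stability} can be applied. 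For the upper range $\beta>m+r$, the defining condition $(\beta,q)\in\mathcal{A}_m$ combined with Sobolev embedding forces the boundary values in \cref{eq:zz-space-conditions} to vanish automatically, so $\zznorm{f}{\cdot}{}$ collapses to the standard $W^{\beta-m-1/q,q}(\gamma_1)$ norm and the estimate again reduces to \cref{eq:em3-stability}.

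Finally, for polynomial preservation: if $f\in\mathcal{P}_p(\gamma_1)$ with $\partial_t^l f|_{\partial\gamma_1}=0$ for $l\in\{0,\dots,r-1\}$, then $f(t)=\{t(1-t)\}^r\tilde f(t)$ with $\tilde f\in\mathcal{P}_{p-2r}(\gamma_1)$ by polynomial division, so $f/\{\tau(1-\tau)\}^r=\tilde f$ is a polynomial and $\mathcal{E}_m^{[1]}(\tilde f)\in\mathcal{P}_{p-2r+m}(T)$ by \cref{lem:em3-derivative-interp-and-stability}; multiplying by the degree-$2r$ polynomial $\{x(1-x-y)\}^r$ gives membership in $\mathcal{P}_{p+m}(T)$. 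The main obstacle is the intermediate-range stability estimate \cref{eq:smr3-stability-high} for $m+1\leq\beta\leq m+r$: one must carefully track how the Leibniz expansion interacts with the endpoint weights so that the factors of $d_2$ and $d_3$ produced by the $\{x(1-x-y)\}^r$ prefactor exactly compensate for the simultaneous endpoint singularities of $f/\{\tau(1-\tau)\}^r$, a two-sided analogue of the one-sided balancing performed in the proof of \cref{lem:mmr3-derivative-interp}.
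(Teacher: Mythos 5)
There is a genuine gap, and it sits exactly where you flag "the main obstacle": the intermediate-range estimate \cref{eq:smr3-stability-high} for $m+1\leq\beta\leq m+r$ is the substance of the lemma, and your proposal defers rather than supplies the idea needed to prove it. The paper's mechanism is a partial fraction decomposition $\{t(1-t)\}^{-r}=\sum_{i=1}^{r}\{\xi_i(t)t^{-i}+\eta_i(t)(1-t)^{-i}\}$ with $\xi_i,\eta_i\in\mathcal{P}_{i-1}(\unitint)$, which writes $\mathcal{S}_{m,r}^{[1]}(f)$ as a finite sum of terms of the form (polynomial)$\times\mathcal{M}_{m,i}^{[1]}[b](\xi_i f)$ and (polynomial)$\times\mathcal{M}_{m,i}^{[1]}[\hat b](\hat\eta_i\hat f)$, where $\hat{}$ denotes reflection $t\mapsto 1-t$. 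Since $f\in W_{00}^{\cdot}$ implies both $f$ and $\hat f$ lie in $W_L^{\cdot}$, each summand is controlled by the already-established one-sided \cref{eq:mmr3-stability}, and no "two-sided balancing" ever has to be performed. Your plan instead proposes to redo the balancing directly for the simultaneous singularity at both endpoints; that is precisely the step you do not carry out, and it is not a routine adaptation of \cref{lem:mmr3-derivative-interp} (whose induction crucially strips one power of $\tau^{-1}$ at a single endpoint via the Hardy-type bounds \cref{eq:fovert-left-bound,eq:fovertkp12-bound}, which are one-sided).

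Two further points would need repair even in the parts you do argue. First, in the low range \cref{eq:smr3-stability-low}, the pointwise bounds $x/(x+ty)\leq 1$ and $(1-x-y)/(1-x-ty)\leq 1$ only majorize the function itself, hence only give the case $\beta=0$; for $0<\beta\leq m$ the norm involves derivatives of $\mathcal{S}_{m,r}^{[1]}(f)$, and derivatives of those ratios are not bounded, so one must first integrate by parts in $t$ as in the proof of \cref{lem:emq1-hm-stability} (or, again, invoke the decomposition into $\mathcal{M}_{m,i}$ terms, which is where the two weights $d_{i+1}$ and $d_{i+2}$ actually come from). Second, your justification of the upper range $\beta>m+r$ is incorrect: membership of $f$ in $W^{\beta-m-\frac{1}{q},q}(\gamma_i)$ for large $\beta$ does not "force the boundary values in \cref{eq:zz-space-conditions} to vanish automatically" — the hypothesis only places $f$ in $W_{00}^{r-\frac{1}{q},q}(\gamma_i)$, and the vanishing up to order $r-1$ is an assumption, not a consequence of Sobolev embedding; the correct route is again through the third case of \cref{eq:mmr3-stability}. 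Your treatment of the interpolation identities \cref{eq:smr3-derivative-interp} and of polynomial preservation (factoring $f=\{t(1-t)\}^{r}\tilde f$ with $\tilde f\in\mathcal{P}_{p-2r}$) is fine and matches the paper's "direct computation."
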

	\noindent  In particular, the function $\mathcal{S}_{m, r}^{[i]}(f)$ is a lifting of $f$ with the additional property that the normal derivatives up to order $r-1$ of $\mathcal{S}_{m, r}^{[i]}(f)$ vanish on $\gamma_{i+1}$ and $\gamma_{i+2}$.
	
	\section[Construction of the Lifting Operator]{Construction of the Lifting Operator $\tilde{\mathcal{L}}$}
	\label{sec:constructing-lifting}
	
	In this section we explicitly construct the operator $\tilde{\mathcal{L}}$ in \cref{thm:tilde-l1-existence} using the single edge operators in the previous section. The construction proceeds in three steps, one per edge. Throughout this section, let $b \in C_c^{\infty}(\unitint)$ denote any fixed function satisfying $\int_\unitint b(t) \ dt = 1$. 
	
	\subsection[Stable Lifting from One Edge]{Stable Lifting from $\gamma_1$}
	
	We begin by constructing a lifting operator from $\gamma_1$. Given functions $f, g : \gamma_1 \to \mathbb{R}$, we formally define $\tilde{\mathcal{L}}^{[1]} : T \to \mathbb{R}$ by the rule
	\begin{align*}
		\tilde{\mathcal{L}}^{[1]}(f, g) := \mathcal{E}_0^{[1]}[b](f) + \mathcal{E}_1^{[1]}[b]\left(g - \partial_{n} \mathcal{E}_0^{[1]}[b](f)|_{\gamma_1}  \right) \qquad \text{on } T. 
	\end{align*}
	The following lemma shows that the operator $\tilde{\mathcal{L}}^{[1]}$ is a stable lifting of $f$ and $g$.
	\begin{lemma}
		\label{lem:single-edge-combined-extension}
		For all $(s, q) \in \mathcal{A}_1$, $f \in W^{s-\frac{1}{q}, q}(\gamma_1)$, and $g \in W^{s-1-\frac{1}{q}, q}(\gamma_1)$, there holds		
		\begin{align}
			\label{eq:l1-interp-data}
			\tilde{\mathcal{L}}^{[1]}(f, g)|_{\gamma_1} = f \quad \text{and} \quad \partial_{n} \tilde{\mathcal{L}}^{[1]}(f, g)|_{\gamma_1} = g
		\end{align}
		with
		\begin{align}
			\label{eq:l1-stability}
			\|\tilde{\mathcal{L}}^{[1]}(f, g)\|_{s, q, T} \lesssim_{s, q} \|f\|_{s-\frac{1}{q}, q, \gamma_1} + \|g\|_{s-1 - \frac{1}{q}, q, \gamma_1}.
		\end{align}
		Moreover, if $f \in \mathcal{P}_{p}(\gamma_1)$ and $g \in \mathcal{P}_{p-1}(\gamma_1)$, $p \in \mathbb{N}_0$, then $\tilde{\mathcal{L}}^{[1]}(f, g) \in \mathcal{P}_{p}(T)$.
	\end{lemma}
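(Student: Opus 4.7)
The plan is to read off each conclusion by applying \cref{lem:em3-derivative-interp-and-stability} to the two summands and then exploiting the algebra of the cancellation built into the definition of $\tilde{\mathcal{L}}^{[1]}$. Write $h := g - \partial_n \mathcal{E}_0^{[1]}[b](f)|_{\gamma_1}$, so that $\tilde{\mathcal{L}}^{[1]}(f,g) = \mathcal{E}_0^{[1]}[b](f) + \mathcal{E}_1^{[1]}[b](h)$.

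First I would verify \cref{eq:l1-interp-data}. From \cref{eq:em3-derivative-interp} with $m=0$ we get $\mathcal{E}_0^{[1]}(f)|_{\gamma_1} = f$, and with $m=1$ we get $\mathcal{E}_1^{[1]}(h)|_{\gamma_1} = 0$ and $\partial_n \mathcal{E}_1^{[1]}(h)|_{\gamma_1} = h$. Adding gives $\tilde{\mathcal{L}}^{[1]}(f,g)|_{\gamma_1} = f$, and
\begin{align*}
\partial_n \tilde{\mathcal{L}}^{[1]}(f,g)|_{\gamma_1} = \partial_n \mathcal{E}_0^{[1]}(f)|_{\gamma_1} + h = g,
\end{align*}
which is the desired trace compatibility. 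This step requires that $\partial_n \mathcal{E}_0^{[1]}(f)|_{\gamma_1}$ is a well-defined function, which is exactly what the case $j=0$, $m=0$ in \cref{eq:em3-derivative-interp} refuses to tell us, but it follows from the Sobolev regularity of $\mathcal{E}_0^{[1]}(f)$ given in \cref{eq:em3-stability} combined with the ordinary trace theorem on the smooth edge $\gamma_1$.

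Next I would establish \cref{eq:l1-stability}. Since $(s,q) \in \mathcal{A}_1 \subset \mathcal{A}_0$ and $s \geq 2$, \cref{eq:em3-stability} with $m=0$, $\beta = s$ gives $\|\mathcal{E}_0^{[1]}(f)\|_{s,q,T} \lesssim_{s,q} \|f\|_{s-\frac{1}{q}, q, \gamma_1}$. By the standard trace theorem applied on $\gamma_1$ (which is a straight smooth segment, so the edge-restricted trace inequality is classical) we then get
\begin{align*}
\bigl\| \partial_n \mathcal{E}_0^{[1]}(f)|_{\gamma_1} \bigr\|_{s-1-\frac{1}{q}, q, \gamma_1} \lesssim_{s,q} \| \mathcal{E}_0^{[1]}(f) \|_{s, q, T} \lesssim_{s,q} \|f\|_{s-\frac{1}{q}, q, \gamma_1},
\end{align*}
so by the triangle inequality $\|h\|_{s-1-\frac{1}{q}, q, \gamma_1} \lesssim_{s,q} \|f\|_{s-\frac{1}{q}, q, \gamma_1} + \|g\|_{s-1-\frac{1}{q}, q, \gamma_1}$. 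Applying \cref{eq:em3-stability} with $m=1$, $\beta = s$ to the second summand then yields the desired bound. The main obstacle here is making sure that the trace of $\partial_n \mathcal{E}_0^{[1]}(f)$ on $\gamma_1$ is handled in the edge norm $W^{s-1-\frac{1}{q}, q}(\gamma_1)$ for every admissible $(s,q)$, including the borderline case $(s-1)q = 2$; this is covered by the hypothesis $(s,q) \in \mathcal{A}_1$ since $s-\frac{1}{q}$ is not an integer when $q \neq 2$, so the standard edge trace theorem applies.

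Finally, for the polynomial preservation claim, suppose $f \in \mathcal{P}_p(\gamma_1)$ and $g \in \mathcal{P}_{p-1}(\gamma_1)$. By the last assertion of \cref{lem:em3-derivative-interp-and-stability}, $\mathcal{E}_0^{[1]}(f) \in \mathcal{P}_p(T)$, so its normal derivative restricted to $\gamma_1$ lies in $\mathcal{P}_{p-1}(\gamma_1)$; hence $h \in \mathcal{P}_{p-1}(\gamma_1)$, and the same lemma gives $\mathcal{E}_1^{[1]}(h) \in \mathcal{P}_{(p-1)+1}(T) = \mathcal{P}_p(T)$. Summing produces $\tilde{\mathcal{L}}^{[1]}(f,g) \in \mathcal{P}_p(T)$, completing the proof.
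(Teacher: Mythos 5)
Your proposal is correct and follows essentially the same route as the paper: both apply \cref{eq:em3-derivative-interp} for the interpolation identities, bound $\|g - \partial_n\mathcal{E}_0^{[1]}[b](f)|_{\gamma_1}\|_{s-1-\frac{1}{q},q,\gamma_1}$ via the triangle inequality and the trace theorem (the paper cites its \cref{eq:trace-thm} where you invoke the classical edge trace estimate, which is the same content), and then apply \cref{eq:em3-stability} to each summand; the polynomial-preservation argument is identical. No gaps.
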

	\begin{proof}
		Let $(s, q) \in \mathcal{A}_1$, $f \in W^{s-\frac{1}{q}, q}(\gamma_1)$, and $g \in W^{s-1-\frac{1}{q}, q}(\gamma_1)$ be given. \Cref{eq:l1-interp-data} follows immediately from \cref{eq:em3-derivative-interp}. Moreover, \cref{eq:em3-stability} and the trace theorem \cref{eq:trace-thm} give
		\begin{align*}
			\|\tilde{\mathcal{L}}^{[1]}(f, g)\|_{s, q, T} &\lesssim_{s, q} \|f\|_{s-\frac{1}{q}, q, \gamma_1} + \|g - \partial_{n} \mathcal{E}_0^{[1]}[b](f)\|_{s-1-\frac{1}{q}, q, \gamma_1} \\
			&\lesssim_{k, q}  \|f\|_{s-\frac{1}{q}, q, \gamma_1} + \|g\|_{s-1 - \frac{1}{q}, q, \gamma_1} + \|\mathcal{E}_0^{[1]}[b](f) \|_{s, q, T} \\
			&\lesssim_{k, q} \|f\|_{s-\frac{1}{q}, q, \gamma_1} + \|g\|_{s-1 - \frac{1}{q}, q, \gamma_1}.
		\end{align*}
		
		Now let $f \in \mathcal{P}_{p}(\gamma_1)$ and $g \in \mathcal{P}_{p-1}(\gamma_1)$, $p \in \mathbb{N}_0$. Then, $\mathcal{E}_0^{[1]}[b](f) \in \mathcal{P}_p(T)$ by \cref{lem:em3-derivative-interp-and-stability}, and so $\partial_{n} \mathcal{E}_0^{[1]}[b](f)|_{\gamma_1} \in \mathcal{P}_{p-1}(\gamma_1)$. Appealing to \cref{lem:em3-derivative-interp-and-stability} again shows that $\mathcal{E}_1^{[1]}[b](g - \partial_{n} \mathcal{E}_0^{[1]}[b](f)|_{\gamma_1}) \in \mathcal{P}_{p}(T)$, and so $\tilde{\mathcal{L}}^{[1]}(f, g) \in \mathcal{P}_p(T)$.
	\end{proof}

	\subsection[Stable Lifting from Two Edges]{Stable Lifting from $\gamma_1$ and $\gamma_2$}
	
	With the aid of the operator $\tilde{\mathcal{L}}^{[1]}$, we proceed counterclockwise around $\partial T$ and construct a lifting operator from $\gamma_1 \cup \gamma_2$. For $f, g : \gamma_1 \cup \gamma_2 \to \mathbb{R}$, we formally define $\mathcal{K}^{[2]}(f, g), \tilde{\mathcal{L}}^{[2]}(f, g) : T \to \mathbb{R}$ by the rules
	\begin{subequations}
		\label{eq:two-edge-lifting-def}
		\begin{alignat}{2}
			\mathcal{K}^{[2]}(f, g)  &:= \tilde{\mathcal{L}}^{[1]}(f, g) + \mathcal{M}_{0, 2}^{[2]}[b](f_2 - \tilde{\mathcal{L}}^{[1]}(f, g)|_{\gamma_2}) \qquad & & \text{on } T, \\
			\tilde{\mathcal{L}}^{[2]}(f, g) &:= \mathcal{K}^{[2]}(f, g) + \mathcal{M}_{1, 2}^{[2]}[b](g_2 - \partial_{n} \mathcal{K}^{[2]}(f, g)|_{\gamma_2} ) \qquad & & \text{on } T. 
		\end{alignat}
	\end{subequations}
	The operator $\mathcal{K}^{[2]}$ corrects the trace of $\tilde{\mathcal{L}}^{[1]}(f, g)$ on $\gamma_2$ to be $f_2$ without changing the trace or normal derivative of $\tilde{\mathcal{L}}^{[1]}(f, g)$ on $\gamma_1$, while $\tilde{\mathcal{L}}^{[2]}(f, g)$ corrects the normal derivative of $\mathcal{K}^{[2]}(f, g)$ on $\gamma_2$ without changing the trace or normal derivative of $\mathcal{K}^{[2]}(f, g)$ on $\gamma_1$ or its trace on $\gamma_2$.

	The continuity of the operators $\mathcal{M}_{0, 2}^{[2]}$ and $\mathcal{M}_{1, 2}^{[2]}$ appearing in \cref{eq:two-edge-lifting-def} depends on the weighted spaces $W^{s, q}_L(\gamma_2)$ \cref{eq:mmr3-stability}. The following lemma provides a useful criterion for verifying when a pair of traces belongs to this space.
	\begin{lemma}
		\label{lem:trace-diff-left-space}
		Let $(s, q) \in \mathcal{A}_1$ and $(f^0, f^1) \in X^{s, q}(\partial T)$. Suppose that for some $j \in \{1,2,3\}$ and $n \in \{0, 1\}$, there holds
		\begin{enumerate}
			\item[(i)] $f_{j}^0 = f_{j}^1 = 0$, and
			
			\item[(ii)] $f_{j+1}^0 = 0$ if $n=1$.
		\end{enumerate}
		Then, $f_{j+1}^n \in W_L^{\beta - \frac{1}{q}, q}(\gamma_{j+1})$ with $\beta = \min\{ s-n, 2 \}$, and there holds
		\begin{align}
			\label{eq:trace-diff-left-space-bound}
			\leftnorm{f_{j+1}^{n}}{\beta-\frac{1}{q}, q, \gamma_{j+1}} &\lesssim_{s, q}   \| (f^0, f^1) \|_{X^{s, q}, \partial T}.
		\end{align}
		If, in addition, $f_{j+1}^l \in \mathcal{P}_{p-l}(\gamma_i)$, $l \in \{0, 1\}$, for some $p \in \mathbb{N}_0$ and \cref{eq:review:sigma0-cont-1,eq:review:sigma1-cont,eq:review:sigma1-deriv-cont} hold for $i=j+2$, then $\partial_t^l f^n_{j+1}(\bdd{a}_{j+2}) = 0$, $l \in \{0, 1\}$.
	\end{lemma}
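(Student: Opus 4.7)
The plan is to verify each defining ingredient of the $W_L^{\beta-\frac{1}{q},q}(\gamma_{j+1})$ norm in turn: the standard $W^{\beta-\frac{1}{q},q}$ regularity on $\gamma_{j+1}$, the pointwise vanishing of $\partial_t^l f^n_{j+1}$ at $\bdd{a}_{j+2}$ for $0\leq l<\beta-\frac{2}{q}$, and, when the fractional part of $\beta-\frac{1}{q}$ equals $\frac{1}{q}$, the weighted $L^q$ integrability in \cref{eq:left-space-conditions}. The unifying observation is that $\bdd{a}_{j+2}$ is the vertex shared by $\gamma_j$ and $\gamma_{j+1}$, so every vertex condition appearing in the definition of $X^{s,q}(\partial T)$ at index $i=j+2$ relates trace data on $\gamma_j$ to that on $\gamma_{j+1}$. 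Since $f^0_j=f^1_j=0$, the $\gamma_j$-side of each such identity is identically zero, directly pinning down the corresponding quantity on $\gamma_{j+1}$ at or near $\bdd{a}_{j+2}$.

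For the Sobolev regularity, $f^n_{j+1}\in W^{s-n-\frac{1}{q},q}(\gamma_{j+1})$ follows from \cref{eq:review:sigma0-edge-reg,eq:review:sigma1-edge-reg}, and the inequality $\beta\leq s-n$ gives the continuous embedding into $W^{\beta-\frac{1}{q},q}(\gamma_{j+1})$. For the vanishing condition, since $\beta\leq 2$, only $l\in\{0,1\}$ can occur. The subcase $(l,n)=(0,0)$ is handled by \cref{eq:review:sigma0-cont-1} (active because $s\geq 2$ forces $sq>2$); the subcases $(l,n)=(0,1)$ and $(1,0)$ both follow from \cref{eq:review:sigma1-cont} by reading off the $\unitvec{n}_{j+1}$- and $\unitvec{t}_{j+1}$-components of $\sigma^1_{j+1}(\bdd{a}_{j+2})=\sigma^1_j(\bdd{a}_{j+2})=0$; and for $(l,n)=(1,1)$, assumption (ii) gives $f^0_{j+1}=0$, which eliminates the $\partial_t^2 f^0_{j+1}$ contribution in $\partial_t\sigma^1_{j+1}$, so that \cref{eq:review:sigma1-deriv-cont} reduces to $(\unitvec{t}_j\cdot\unitvec{n}_{j+1})\,\partial_t f^1_{j+1}(\bdd{a}_{j+2})=0$; the non-parallelism of $\gamma_j$ and $\gamma_{j+1}$ ensures $\unitvec{t}_j\cdot\unitvec{n}_{j+1}\neq 0$, giving the result. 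In each subcase the activation threshold ($sq>2$, $(s-1)q>2$, or $(s-2)q>2$) for the $X^{s,q}$ vertex condition agrees exactly with the constraint $l<\beta-\frac{2}{q}$.

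For the weighted $L^q$ integral, a short case analysis using $\beta\in[1,2]$ shows that the equality $\beta=\lfloor\beta-\frac{1}{q}\rfloor+\frac{2}{q}$ holds only when $(s-1)q=2$ or $(s-2)q=2$, which is exactly when the corresponding $\mathcal{I}^q_{j+2}$ term in the $X^{s,q}$ norm is active. Parametrising $\gamma_{j+1}$ near $\bdd{a}_{j+2}$ by $h\mapsto\bdd{a}_{j+2}+h\unitvec{t}_{j+1}$ so that $d_{j+2}=h$, and using that $\sigma^1_j\equiv 0$ collapses the $\gamma_j$-side of the $\mathcal{I}^q_{j+2}$ integrand, one recovers the required weighted $L^q$ integral of $f^n_{j+1}$ (or of its tangential derivative), up to the bounded, nonzero factor $\unitvec{t}_j\cdot\unitvec{n}_{j+1}$ in the derivative case; the far part of $\gamma_{j+1}$ contributes no singularity since $d_{j+2}$ is then bounded below. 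The bound \cref{eq:trace-diff-left-space-bound} then follows by summing the edge Sobolev norm and the $\mathcal{I}^q_{j+2}$ term, both controlled by $\|(f^0,f^1)\|_{X^{s,q},\partial T}$. The polynomial conclusion repeats the argument of the preceding paragraph with \cref{eq:review:sigma0-cont-1,eq:review:sigma1-cont,eq:review:sigma1-deriv-cont} invoked as direct hypotheses at $i=j+2$, rather than as consequences of $(s,q)$-dependent regularity thresholds. The main technical subtlety is the bookkeeping in the third paragraph, namely matching the narrow range of $(\beta,q)$ in which the $W_L$ weighted integral activates to exactly the right $\mathcal{I}^q_{j+2}$ term in the $X^{s,q}$ norm.
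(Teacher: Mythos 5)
Your proposal follows essentially the same route as the paper: edge regularity comes directly from the $X^{s,q}$ norm, the vertex conditions at $\bdd{a}_{j+2}$ are extracted from \cref{eq:review:sigma0-cont-1,eq:review:sigma1-cont,eq:review:sigma1-deriv-cont} by using $f^0_j=f^1_j=0$ to annihilate the $\gamma_j$-side of each compatibility identity (with the decomposition of $\unitvec{n}_{j+1}$ along $\unitvec{t}_j,\unitvec{t}_{j+1}$ and hypothesis (ii) playing the same role as in the paper's $n=1$ case), and the weighted $L^q$ term is controlled by the $\mathcal{I}^q_{j+2}$ contributions to the $X^{s,q}$ norm. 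The one imprecision is your claim that the $W_L$ weighted seminorm activates \emph{only} when $(s-1)q=2$ or $(s-2)q=2$: for $q=2$ it is required for every $s$, but in the non-critical cases it follows from the vertex vanishing already established together with a Hardy-type bound such as \cref{eq:fovertkp12-bound}, a point the paper's own proof elides as well.
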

	\begin{proof}
		Let $F = (f^0, f^1) \in X^{s, q}(\partial T)$ be as in the statement of the lemma. By definition, there holds
		\begin{align}
			\label{eq:proof:trace-comp-triangle-ineq}
			\|f_{j+1}^{n} \|_{\beta-\frac{1}{q}, q, \gamma_{j+1}} &\lesssim_{s, q}   \| (f^0, f^1) \|_{X^{s, q}, \partial T},
		\end{align}
		and so it remains to verify the conditions in \cref{eq:left-space-conditions} and bound the weighted $L^q$ norm term in \cref{eq:left-norm-def-gamma} when $s - 2/q \in \mathbb{Z}$.
		
		Suppose first that $n=0$. Thanks to (i), we have
		\begin{align*}
			f_{j+1}^0 \circ \phi_{j+1} &= f_{j+1}^0 \circ \phi_{j+1} - f_j^0 \circ \phi_{j},
		\end{align*}
		where $\phi_{j}(h) = \bdd{a}_{j+2} - h \unitvec{t}_{j}$ and $\phi_{j+1}(h) = \bdd{a}_{j+2} + h \unitvec{t}_{j+1}$ for $0 \leq h \leq 1$ are (partial) parametrizations of $\gamma_j$ and $\gamma_{j+1}$. Thus, $f_{j+1}^0(\bdd{a}_{j+2}) = 0$. Using (i) once again gives
		\begin{align*}
			\partial_{h} \{ f_{j+1}^0 \circ \phi_{j+1} \}   =  \unitvec{t}_{j+1} \cdot \{  \sigma_{j+1}^1(F) \circ \phi_{j+1} \} =  \unitvec{t}_{j+1} \cdot \{  \sigma_{j+1}^1(F) \circ \phi_{j+1} -  \ \sigma_{j}^1(F) \circ \phi_{j} \}. 
		\end{align*}
		Thus, $\partial_t f_{j+1}^{0} (\bdd{a}_{j+2}) = 0$ when $(s-1)q > 2$ by \cref{eq:review:sigma1-cont}. For $(s-1)q = 2$, we use a change of variable and the triangle inequality to conclude
		\begin{align*}
			\| d_{j+2}^{-\frac{1}{q}} \partial_t f^0\|_{q, \gamma_{j+1}}^q
			\lesssim_q  \|\partial_t f^0\|_{q, \gamma_{j+1}}^q + \mathcal{I}_{j+2}^q( \sigma_{j}(F), \sigma_{j+1}(F) ). 
		\end{align*}
		\Cref{eq:trace-diff-left-space-bound} now follows from \cref{eq:proof:trace-comp-triangle-ineq} and $f_{j+1}^{0} \in W_L^{\min\{s, 2\} - \frac{1}{q}, q}(\gamma_{j+1})$ by \cref{eq:left-space-conditions}.

		Now suppose that $n=1$. Since $\unitvec{t}_j$ and $\unitvec{t}_{j+1}$ are linearly independent, there exist constants $c_0, c_1 \in \mathbb{R}$ such that $\unitvec{n}_{j+1} = c_0 \unitvec{t}_j + c_1 \unitvec{t}_{j+1}$. Thanks to the orthogonality of $\unitvec{t}_{j+1}$ and $\unitvec{n}_{j+1}$, there holds
		\begin{multline*}
			f^1_{j+1} =  \unitvec{n}_{j+1} \cdot  \sigma_{j+1}^{1}(F) = c_0 \unitvec{t}_j \cdot \sigma_{j+1}^{1}(F)  + c_1 \unitvec{t}_{j+1} \cdot \sigma_{j+1}^{1}(F) \\
			= c_0 \unitvec{t}_j \cdot \sigma_{j+1}^{1}(F)  + c_1 \partial_t \sigma_{j+1}^{0}(f^0) = c_0 \unitvec{t}_j \cdot \sigma_{j+1}^{1}(F),
		\end{multline*}
		where we used condition (ii) in the final equality. Again using $F \equiv 0$ on $\gamma_j$ by (i), we obtain 
		\begin{align*}
			\partial_{h}^{i} \{ f_{j+1}^1  \circ \phi_{j+1} \}  =  \alpha \partial_{h}^{i}  \{ \unitvec{t}_j \cdot \sigma_{j+1}^{1}(F) \circ \phi_{j+1}   - \unitvec{t}_{j+1} \cdot \sigma_{j}^{1}(F) \circ \phi_{j} \}, \quad i \in \{0, 1\}.
		\end{align*}
		Consequently, $\partial_t^{i} f_{j+1}^{1}(\bdd{a}_{j+2}) = 0$ when $s > i + 1 + \frac{2}{q}$ by \cref{eq:review:sigma1-cont,eq:review:sigma1-deriv-cont}. Similar arguments as above show that for $(s-i-1)q=2$, there holds
		\begin{align*}
			\|d_{j+1}^{-\frac{1}{q}} \partial_t^i f \|_{1, \gamma_{j+1}}^q \lesssim \|\partial_t^i f^1\|_{q, \gamma_{j+1}}^q 
			+ \begin{cases}
				\mathcal{I}_{j+2}^q( \sigma_{j}(F), \sigma_{j+1}(F) ) & \text{if } i = 0, \\
				\mathcal{I}_{j+2}^q( \unitvec{t}_{j+1} \cdot \sigma_{j}(F), \unitvec{t}_{j} \cdot \sigma_{j+1}(F) ) & \text{if } i = 1.
			\end{cases} 
		\end{align*}
		Thus, $f_{j+1}^{1} \in W_L^{\min\{ s-1, 2 \} - \frac{1}{q}, q}(\gamma_{j+1})$ with \cref{eq:trace-diff-left-space-bound}.
		
		Now suppose that $f_i^l \in \mathcal{P}_{p}(\gamma_i)$, $l \in \{0, 1\}$, $i \in \{j, j+1\} $ for some $p \in \mathbb{N}_0$ and \cref{eq:review:sigma0-cont-1,eq:review:sigma1-cont,eq:review:sigma1-deriv-cont} hold for $i=j+2$. Then, we have already shown that $f^n_{j+1} \in W_L^{2 - \frac{1}{q},q}(\gamma_{j+1})$ for all $1 < q < \infty$, and so $\partial_t^l f^n_{j+1} (\bdd{a}_{j+2}) = 0$, $l \in \{0, 1\}$.
	\end{proof}

	\begin{lemma}
		\label{lem:two-edge-combined-extension}
		For all $(s, q) \in \mathcal{A}_1$ and  $(f, g) \in X^{s, q}(\partial T)$, there holds
		\begin{align}
			\label{eq:l2-interp-data}
			\tilde{\mathcal{L}}^{[2]}(f, g)|_{\gamma_i} = f_i \quad \text{and} \quad \partial_{n} \tilde{\mathcal{L}}^{[2]}(f, g)|_{\gamma_i} = g_i, \qquad i \in \{1,2\},
		\end{align}
		and
		\begin{align}
			\label{eq:l2-stability}
			\|\tilde{\mathcal{L}}^{[2]}(f, g)\|_{s, q, T} \lesssim_{s, q} \|(f, g)\|_{X^{s, q}, \partial T}.
		\end{align}
		If, in addition, $f_i \in \mathcal{P}_{p}(\gamma_i)$, $g_i \in \mathcal{P}_{p-1}(\gamma_i)$, $i \in \{1, 2\} $, $p \in \mathbb{N}_0$, and \cref{eq:review:sigma0-cont-1,eq:review:sigma1-cont,eq:review:sigma1-deriv-cont} hold for $i=3$, then $\tilde{\mathcal{L}}^{[2]}(f, g) \in \mathcal{P}_{p}(T)$.
	\end{lemma}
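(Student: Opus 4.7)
The plan is to proceed in three steps mirroring the definition \cref{eq:two-edge-lifting-def}: first verify the interpolation identities, then establish the stability bound, and finally the polynomial-preservation property. For the interpolation identities, I would unwind the definitions using \cref{eq:mmr3-derivative-interp}: since both $\mathcal{M}_{0,2}^{[2]}$ and $\mathcal{M}_{1,2}^{[2]}$ have trace and normal derivative vanishing on $\gamma_1$ (the case $r=2$, $l\in\{0,1\}$), the two correction terms leave $\tilde{\mathcal{L}}^{[1]}(f,g)|_{\gamma_1}$ and $\partial_n\tilde{\mathcal{L}}^{[1]}(f,g)|_{\gamma_1}$ unchanged, so the $\gamma_1$-identities follow from \cref{lem:single-edge-combined-extension}. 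On $\gamma_2$, a direct computation with \cref{eq:mmr3-derivative-interp} first for $(m,r)=(0,2)$ gives $\mathcal{K}^{[2]}(f,g)|_{\gamma_2} = f_2$, and then for $(m,r)=(1,2)$ gives $\tilde{\mathcal{L}}^{[2]}(f,g)|_{\gamma_2} = f_2$ and $\partial_n\tilde{\mathcal{L}}^{[2]}(f,g)|_{\gamma_2} = g_2$.

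For the stability estimate, the key is to apply \cref{lem:trace-diff-left-space} twice so as to control the correction arguments in the weighted spaces $W_L^{\bullet, q}(\gamma_2)$ required by \cref{eq:mmr3-stability}. Setting $F := (f - \tilde{\mathcal{L}}^{[1]}(f,g)|_{\partial T}, g - \partial_n \tilde{\mathcal{L}}^{[1]}(f,g)|_{\partial T})$, the trace theorem \cref{eq:trace-thm} applied to $\tilde{\mathcal{L}}^{[1]}(f,g) \in W^{s,q}(T)$ combined with the stability in \cref{lem:single-edge-combined-extension} shows that $F \in X^{s,q}(\partial T)$ with $\|F\|_{X^{s,q},\partial T} \lesssim_{s,q} \|(f,g)\|_{X^{s,q},\partial T}$; moreover $F$ vanishes identically on $\gamma_1$. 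Applying \cref{lem:trace-diff-left-space} with $j=1$, $n=0$ then controls $F^0_2 = f_2 - \tilde{\mathcal{L}}^{[1]}(f,g)|_{\gamma_2}$ in $W_L^{\min\{s,2\}-1/q,\, q}(\gamma_2)$, whereupon \cref{eq:mmr3-stability} for $\mathcal{M}_{0,2}^{[2]}$ yields $\|\mathcal{K}^{[2]}(f,g)\|_{s,q,T} \lesssim_{s,q} \|(f,g)\|_{X^{s,q},\partial T}$. Repeating the construction with $\tilde{F} := (f - \mathcal{K}^{[2]}(f,g)|_{\partial T}, g - \partial_n \mathcal{K}^{[2]}(f,g)|_{\partial T})$ — which vanishes on $\gamma_1$ and satisfies $\tilde{F}^0_2 = 0$ thanks to the interpolation just established — and applying \cref{lem:trace-diff-left-space} with $j=1$, $n=1$, one bounds $\tilde{F}^1_2 = g_2 - \partial_n \mathcal{K}^{[2]}(f,g)|_{\gamma_2}$ in $W_L^{\min\{s-1,2\}-1/q,\, q}(\gamma_2)$, so the analogous invocation of \cref{eq:mmr3-stability} for $\mathcal{M}_{1,2}^{[2]}$ completes \cref{eq:l2-stability}.

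For the polynomial property, under the polynomial hypotheses $\tilde{\mathcal{L}}^{[1]}(f,g) \in \mathcal{P}_p(T)$ by \cref{lem:single-edge-combined-extension}, and hence $F^0_2$ and $\tilde{F}^1_2$ are polynomials of degree at most $p$ and $p-1$ on $\gamma_2$. The polynomial part of \cref{lem:trace-diff-left-space} (which invokes the vertex compatibility conditions \cref{eq:review:sigma0-cont-1,eq:review:sigma1-cont,eq:review:sigma1-deriv-cont} at $\bdd{a}_3$, i.e., the $i=3$ case) then delivers the vanishing $\partial_t^l \{\cdot\}(\bdd{a}_3) = 0$ for $l \in \{0,1\}$ needed by the polynomial hypothesis of \cref{lem:mmr3-derivative-interp} with $r=2$, giving $\mathcal{M}_{0,2}^{[2]}(F^0_2) \in \mathcal{P}_p(T)$ and $\mathcal{M}_{1,2}^{[2]}(\tilde{F}^1_2) \in \mathcal{P}_p(T)$, and therefore $\tilde{\mathcal{L}}^{[2]}(f,g) \in \mathcal{P}_p(T)$. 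The main obstacle is identifying the correct indices $(j,n)$ in \cref{lem:trace-diff-left-space} and verifying that its hypotheses — vanishing on $\gamma_1$, and for $n=1$ also vanishing of the $0$th-order trace on $\gamma_2$ — are indeed met by $F$ and $\tilde{F}$; once this bookkeeping is in place, the remainder of the argument is a mechanical composition of the single-edge estimates from \cref{sec:single-edge-defs}.
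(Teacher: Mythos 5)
Your proposal is correct and follows essentially the same route as the paper: verify the interpolation identities from \cref{eq:mmr3-derivative-interp}, use \cref{lem:trace-diff-left-space} with $n=0$ and then $n=1$ (applied to the residual traces, which vanish on $\gamma_1$ by \cref{lem:single-edge-combined-extension} and \cref{eq:mmr3-derivative-interp}) to place the correction data in the weighted spaces $W_L^{\cdot,q}(\gamma_2)$, invoke \cref{eq:mmr3-stability}, and finish the polynomial case via the vanishing conditions at $\bdd{a}_3$ from \cref{lem:trace-diff-left-space} and the polynomial assertion of \cref{lem:mmr3-derivative-interp}. You in fact make explicit some bookkeeping the paper leaves implicit (that the residual pair $F$ lies in $X^{s,q}(\partial T)$ with controlled norm via the trace theorem \cref{eq:trace-thm}), but the argument is the same.
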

	\begin{proof}
		Let  $(s, q) \in \mathcal{A}_1$ and $(f, g) \in X^{s, q}(\partial T)$. Applying \cref{lem:single-edge-combined-extension,lem:trace-diff-left-space} with $n=0$ gives $f_{2} - \tilde{\mathcal{L}}^{[1]}(f, g)|_{\gamma_2} \in W_L^{\min\{s, 2\}-\frac{1}{q}, q}(\gamma_2)$, and so 
		\begin{align*}
			\mathcal{K}^{[2]}(f, g)|_{\gamma_i} = f_{i}, \quad i \in \{1, 2\}, \quad \text{and} \quad \partial_{n} \mathcal{K}^{[2]}(f, g)|_{\gamma_1} = g_1
		\end{align*}
		with the estimate
		\begin{align*}
			\| \mathcal{K}^{[2]}(f, g) \|_{s, q, T} &\lesssim \| \tilde{\mathcal{L}}^{[1]}(f, g)\|_{s, q, T} + \| \mathcal{M}_{0, 2}^{[2]}(f_1 - \tilde{\mathcal{L}}^{[1]}(f, g)|_{\gamma_1}) \|_{s, q, T} \\
			&\lesssim \|(f, g)\|_{X^{s, q}, \partial T}
		\end{align*}
		by \cref{lem:mmr3-derivative-interp}.
		Now applying \cref{lem:single-edge-combined-extension,lem:trace-diff-left-space} with $n=1$ shows that $g_{2} - \partial_n \mathcal{K}_1(f, g)|_{\gamma_2} \in W_L^{\min\{s-1, 2\}-\frac{1}{q}, q}(\gamma_2)$. Another application of \cref{lem:mmr3-derivative-interp} and the triangle inequality completes the proof of \cref{eq:l2-interp-data,eq:l2-stability}.
		
		Now assume further that $f_i \in \mathcal{P}_p(\gamma_i)$, $g_i \in \mathcal{P}_{p-1}(\gamma_i)$, $i\in \{1,2\}$, $p \in \mathbb{N}_0$, and \cref{eq:review:sigma0-cont-1,eq:review:sigma1-cont,eq:review:sigma1-deriv-cont} hold for $i=3$. Thanks to \cref{lem:single-edge-combined-extension}, $\tilde{\mathcal{L}}^{[1]}(f, g) \in \mathcal{P}_{p}(T)$, and \cref{lem:trace-diff-left-space} then gives $\partial_t^{l} (f_{2} - \tilde{\mathcal{L}}^{[1]}(f, g)|_{\gamma_2})(\bdd{a}_3) = 0$ for $l \in \{0, 1\}$. Consequently, $\mathcal{K}^{[2]}(f, g) \in \mathcal{P}_{p}(T)$ by \cref{lem:mmr3-derivative-interp}. Applying similar arguments show that  $\partial_t^{l} (g_2 - \partial_n \mathcal{K}^{[2]}(f, g)|_{\gamma_2})(\bdd{a}_3) = 0$ for $l \in \{0, 1\}$, and so \cref{lem:mmr3-derivative-interp} gives $\tilde{\mathcal{L}}^{[2]}(f, g) \in \mathcal{P}_{p}(T)$.
	\end{proof}
	
	\subsection{Stable Lifting from Entire Boundary}
	
	With the aid of the operator $\tilde{\mathcal{L}}^{[2]}$, we again proceed counterclockwise around $\partial T$ and finally complete the construction of the lifting operator $\tilde{\mathcal{L}}$. For $f, g : \partial T \to \mathbb{R}$, we formally define $\mathcal{K}^{[3]}(f, g), \tilde{\mathcal{L}}(f, g) : T \to \mathbb{R}$ by the rules
	\begin{alignat*}{2}
		\mathcal{K}^{[3]}(f, g)  &:= \tilde{\mathcal{L}}^{[2]}(f, g) + \mathcal{S}_{0, 2}^{[3]}[b](f_3 - \tilde{\mathcal{L}}^{[2]}(f, g)|_{\gamma_3}) \qquad & & \text{on } T, \\
		\tilde{\mathcal{L}}(f, g) &:= \mathcal{K}^{[3]}(f, g) + \mathcal{S}_{1, 2}^{[3]}[b](g_3 - \partial_{n} \mathcal{K}^{[3]}(f, g)|_{\gamma_3} ) \qquad & & \text{on } T. 
	\end{alignat*}
	The operator $\mathcal{K}^{[3]}$ corrects the trace of $\tilde{\mathcal{L}}^{[2]}(f, g)$ on $\gamma_3$ to be $f_3$ without changing the trace or normal derivative of $\tilde{\mathcal{L}}^{[2]}(f, g)$ on $\gamma_1 \cup \gamma_2$, while $\tilde{\mathcal{L}}(f, g)$ corrects the normal derivative of $\mathcal{K}^{[3]}(f, g)$ on $\gamma_3$ without changing the trace or normal derivative of $\mathcal{K}^{[3]}(f, g)$ on $\gamma_1 \cup \gamma_2$ or its trace on $\gamma_3$. We start with an analogue of \cref{lem:trace-diff-left-space}.
	\begin{lemma}
		\label{lem:trace-diff-zz-space}
		Let $(s, q) \in \mathcal{A}_1$, and $(f^0, f^1) \in X^{s, q}(\partial T)$. Suppose that for some $n \in \{0, 1\}$, there holds
		\begin{enumerate}
			\item[(i)] $f_{i}^{l} =0$ for $l \in \{0, 1\}$ and $i \in \{1, 2\}$, and
			
			\item[(ii)] $f_{3}^{0} = 0$ if $n=1$.
		\end{enumerate}
		Then, $f_{3}^{n} \in W_{00}^{\beta - \frac{1}{q}, q}(\gamma_{3})$ with $\beta = \min\{ s-n, 2 \}$, and there holds
		\begin{align}
			\label{eq:trace-diff-zz-space-bound}
			\zznorm{f_{3}^{n}}{\beta-\frac{1}{q}, q, \gamma_{3}} &\lesssim_{s, q}   \| F \|_{X^{s, q},\partial T}.
		\end{align}
		If, in addition, $f_i^l \in \mathcal{P}_{p}(\gamma_i)$, $l \in \{0, 1\}$, $i \in \{0, 1, 2\} $ for some $p \in \mathbb{N}_0$ and \cref{eq:review:sigma0-cont-1,eq:review:sigma1-cont,eq:review:sigma1-deriv-cont} hold, then $\partial_t^l f^n_{3}|_{\partial \gamma_{3}} = 0$, $l \in \{0, 1\}$.
	\end{lemma}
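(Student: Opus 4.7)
The approach is to establish vanishing of $f_3^n$ at \emph{both} endpoints of $\gamma_3$ by two applications of the techniques used in the proof of \cref{lem:trace-diff-left-space}. The endpoints of $\gamma_3$ are $\bdd{a}_1$ (shared with $\gamma_2$) and $\bdd{a}_2$ (shared with $\gamma_1$), and by \cref{eq:zz-space-conditions} the space $W_{00}^{\beta-\frac{1}{q},q}(\gamma_3)$ requires suitable vanishing or weighted integrability at both, whereas $W_L^{\beta-\frac{1}{q},q}(\gamma_3)$ encodes this only at $\bdd{a}_{3+1}=\bdd{a}_1$.

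The first step is to invoke \cref{lem:trace-diff-left-space} directly with $j=2$: assumption (i) of the current lemma supplies $f_2^0 = f_2^1 = 0$, assumption (ii) supplies $f_{j+1}^0 = f_3^0 = 0$ in the $n=1$ case, and the conclusion is $f_3^n \in W_L^{\beta - \frac{1}{q}, q}(\gamma_3)$ with $\leftnorm{f_3^n}{\beta - \frac{1}{q}, q, \gamma_3} \lesssim_{s,q} \|F\|_{X^{s,q},\partial T}$. This handles the endpoint $\bdd{a}_1$. The second step mirrors the proof of \cref{lem:trace-diff-left-space} but with the two adjacent edges interchanged, to handle $\bdd{a}_2$. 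Parametrizing $\gamma_3$ and $\gamma_1$ from $\bdd{a}_2$ and using $f_1 \equiv 0$, one writes $f_3^0 \circ \phi_3 = f_3^0 \circ \phi_3 - f_1^0 \circ \phi_1$ and, for $n=1$, decomposes $\unitvec{n}_3 = c_0 \unitvec{t}_1 + c_1 \unitvec{t}_3$ to obtain
\begin{align*}
  f_3^1 = c_0 \unitvec{t}_1 \cdot \sigma_3^1(F) + c_1 \partial_t f_3^0 = c_0 \unitvec{t}_1 \cdot \sigma_3^1(F),
\end{align*}
where the last equality uses assumption (ii). The continuity conditions \cref{eq:review:sigma0-cont-combo,eq:review:sigma1-cont-combo,eq:review:sigma1-deriv} at $\bdd{a}_2$ together with $f_1 \equiv 0$ then yield the analogous vanishing or weighted bounds for $f_3^n$ and, when $n=1$, for $\partial_t f_3^1$ at $\bdd{a}_2$, each controlled by $\|F\|_{X^{s,q},\partial T}$.

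Combining the two steps produces $f_3^n \in W_{00}^{\beta - \frac{1}{q}, q}(\gamma_3)$ with the estimate \cref{eq:trace-diff-zz-space-bound}. The polynomial statement then follows exactly as in \cref{lem:trace-diff-left-space}: the argument applies with $\beta = 2$ for every $1 < q < \infty$, which forces $\partial_t^l f_3^n(\bdd{a}_j) = 0$ for $l \in \{0,1\}$ and $j \in \{1,2\}$, i.e., $\partial_t^l f_3^n|_{\partial \gamma_3} = 0$. The main obstacle is that \cref{lem:trace-diff-left-space} is stated with asymmetric indexing and only gives control at the specific endpoint $\bdd{a}_{i+1}$ of $\gamma_i$; one cannot simply reindex to obtain the opposite endpoint, so Step 2 requires genuinely re-running the earlier argument with the adjacent edges swapped. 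The technical verification needed is just that the continuity conditions \cref{eq:review:sigma0-cont-combo,eq:review:sigma1-cont-combo,eq:review:sigma1-deriv} are symmetric in the indices $i+1$ and $i+2$, so that the estimates at $\bdd{a}_2$ transfer verbatim.
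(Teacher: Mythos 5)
Your proposal is correct and follows essentially the same route as the paper: first apply \cref{lem:trace-diff-left-space} across $\gamma_2\cup\gamma_3$ to control the endpoint $\bdd{a}_1$, then re-run that lemma's argument with the roles of the adjacent edges swapped (using $f_1\equiv 0$ and the symmetry of \cref{eq:review:sigma0-cont-combo,eq:review:sigma1-cont-combo,eq:review:sigma1-deriv} in $i+1,i+2$) to control $\bdd{a}_2$, and finally deduce the polynomial vanishing from membership in $W_{00}^{2-\frac{1}{q},q}(\gamma_3)$ for all $q$. The only detail you gloss over is the merge of the two one-sided weighted $L^q$ conditions into the two-sided $W_{00}$ weight, which the paper justifies by noting $(d_1 d_2)^{-\frac{1}{q}}\lesssim d_1^{-\frac{1}{q}}+d_2^{-\frac{1}{q}}$ on $\gamma_3$ since $d_1+d_2\geq 1$ there.
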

	\begin{proof}
		Let $F = (f^0, f^1) \in X^{s, q}(\partial T)$ be as in the statement of the lemma and $n \in \{0, 1\}$. Applying \cref{lem:trace-diff-left-space} to $\gamma_2 \cup \gamma_3$ gives $f_{3}^{n} \in W_{L}^{\beta - \frac{1}{q}, q}(\gamma_{3})$ with 
		\begin{align}
			\label{eq:proof:gamma3-left-norm-bound}
			\leftnorm{f_{3}^{n}}{\beta-\frac{1}{q}, q, \gamma_{3}} &\lesssim_{s, q}   \| F \|_{X^{s, q},\partial T}.
		\end{align}
		Applying the same arguments as in the proof of \cref{lem:trace-diff-left-space} with $j=3$ and reversing the roles of $\gamma_3$ and $\gamma_{1}$ then gives
		\begin{align*}
			\partial_t^i f_{3}^{n} (\bdd{a}_{2}) = 0 \qquad \text{for } 0 \leq i <  \min\left\{ s - n - \frac{2}{q}, 2 \right\}, 
		\end{align*}
		and if  $(s-i-n)q = 2$ for some $i \in \{0, 1\}$, then
		\begin{align}
			\label{eq:proof:gamma3-right-norm-bound}
			 \| d_2^{-\frac{1}{q}} \partial_t^i f\|_{q, \gamma_3} 
			 \lesssim_{s, q} \|F\|_{X^{s, q}, \partial T}.
		\end{align}
		The inclusion $f_{3}^{n} \in W_{00}^{\beta - \frac{1}{q}, q}(\gamma_{3})$ then follows from \cref{eq:zz-space-conditions} on noting that $d_{1} + d_2 \lesssim d_1 d_2$ on $\gamma_3$, which in conjunction with \cref{eq:proof:gamma3-left-norm-bound}, \cref{eq:proof:gamma3-right-norm-bound}, and the triangle inequality gives \cref{eq:trace-diff-zz-space-bound}.
		
		Now suppose that $f_i^l \in \mathcal{P}_{p}(\gamma_i)$, $l \in \{0, 1\}$, $i \in \{1, 2, 3\} $ for some $p \in \mathbb{N}_0$ and \cref{eq:review:sigma0-cont-1,eq:review:sigma1-cont,eq:review:sigma1-deriv-cont} hold. Then, we have already shown that $f^n_{3} \in W_{00}^{2 - \frac{1}{q},q}(\gamma_3)$ for all $1 < q < \infty$, and so $\partial_t^l f^n_{3} (\bdd{a})|_{\gamma_3} = 0$, $l \in \{0, 1\}$.
	\end{proof}

	We now prove the main result, \cref{thm:tilde-l1-existence}.
	\begin{proof}[Proof of \cref{thm:tilde-l1-existence}]
		Let $(s, q) \in \mathcal{A}_1$ and $(f, g) \in X^{s, q}(\partial T)$. According to \cref{lem:two-edge-combined-extension} and \cref{lem:trace-diff-zz-space},  $f_{3} - \tilde{\mathcal{L}}^{[2]}(f, g)|_{\gamma_3} \in W_{00}^{\min\{s, 2\}-\frac{1}{q}, q}(\gamma_3)$. Consequently,  \cref{lem:smr3-derivative-interp} shows that
		\begin{align*}
			\mathcal{K}^{[3]}(f, g)|_{\gamma_i} = f_{i}, \quad i \in \{1, 2, 3\}, \quad \text{and} \quad \partial_{n} \mathcal{K}^{[3]}(f, g)|_{\gamma_j} = g_j, \quad j \in \{1, 2\},
		\end{align*}
		with the estimate
		\begin{align*}
			\| \mathcal{K}^{[3]}(f, g) \|_{s, q, T} &\lesssim_{s, q} \| \tilde{\mathcal{L}}^{[2]}(f, g)\|_{s, q, T} + \| \mathcal{S}_{0, 2}^{[3]}(f_3 - \tilde{\mathcal{L}}^{[2]}(f, g)|_{\gamma_3}) \|_{s, q, T} \\
			&\lesssim_{s, q} \|(f, g)\|_{X^{s, q}, \partial T}.
		\end{align*}
		Applying \cref{lem:two-edge-combined-extension} and \cref{lem:trace-diff-zz-space} once again show that $g_{3} - \partial_n \mathcal{K}^{[3]}(f, g)|_{\gamma_3} \in W_{00}^{\min\{s-1, 2\}-\frac{1}{q}, q}(\gamma_3)$. Another application of \cref{lem:smr3-derivative-interp} and the triangle inequality completes the proof of \cref{eq:tilde-l1-prop}.

		Now assume further that $f_i \in \mathcal{P}_p(\gamma_i)$, $g_i \in \mathcal{P}_{p-1}(\gamma_i)$, $i \in \{1,2, 3\}$, $p \in \mathbb{N}_0$, and \cref{eq:review:sigma0-cont-1,eq:review:sigma1-cont,eq:review:sigma1-deriv-cont} hold. Thanks to \cref{lem:single-edge-combined-extension}, $\tilde{\mathcal{L}}^{[2]}(f, g) \in \mathcal{P}_{p}(T)$, and \cref{lem:trace-diff-zz-space} then gives $\partial_t^{l} (f_{3} - \tilde{\mathcal{L}}^{[1]}(f, g)|_{\gamma_3} )|_{\partial \gamma_3} = 0$ for $l \in \{0, 1\}$. Consequently, $\mathcal{K}^{[3]}(f, g) \in \mathcal{P}_{p}(T)$ by \cref{lem:smr3-derivative-interp}. Applying similar arguments show that  $\partial_t^{l} (g_3 - \partial_n \mathcal{K}^{[3]}(f, g)|_{\gamma_3})|_{\partial \gamma_3}  = 0$ for $l \in \{0, 1\}$, and so \cref{lem:smr3-derivative-interp} gives $\tilde{\mathcal{L}}(f, g) \in \mathcal{P}_{p}(T)$.
	\end{proof}	
	
	\section{Continuity of single edge operators}
	\label{sec:single-edge-stability}
	
	In this section, we prove \cref{lem:em3-derivative-interp-and-stability,lem:mmr3-derivative-interp,lem:smr3-derivative-interp}.
	
	\subsection{Continuity in some weighted $L^q$ spaces}
	
	Given an open interval $\Lambda \subseteq (0, \infty)$, we define the weighted space $L^q(\Lambda; t^{\beta}dt)$, $\beta > -1$ to be the set of all measurable functions such that the following norm is finite:
	\begin{align}
		\label{eq:weighted-lq-norm-def}
		\| f \|_{q, \Lambda, \beta}^q := \int_{\Lambda} |f(t)|^q t^{\beta} \ dt.
	\end{align}
	The following result shows that $\mathcal{E}_m^{[1]}$ is well-defined on $L^q(\unitint; t^{mq+1} dt)$.
	\begin{lemma}
		\label{lem:weighted-lq-e0}
		For all $m \in \mathbb{N}_0$, $b \in C^{\infty}_c(\unitint)$, and $1 < q < \infty$, there holds
		\begin{alignat}{2}
			\label{eq:e0 l2-by-weighted-l2-bound}
			\| \mathcal{E}_m^{[1]}[b](f) \|_{q, T} &\leq  \frac{q}{(q-1)m!}  \|\tau^{-m} b\|_{\infty, \unitint} \| f\|_{q, \unitint, mq+1} \qquad \forall f \in L^q(\unitint; t^{mq+1} dt). 
		\end{alignat}
	\end{lemma}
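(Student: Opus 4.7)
The plan is to reduce to a one-variable change of variable after first pulling the $t$-integral outside the $L^q(T)$ norm. Since $b \in C_c^\infty(\unitint)$ vanishes to all orders at $t = 0$, the quotient $\tau^{-m} b$ is bounded on $\unitint$, yielding the pointwise bound $|b(t)| \leq t^m \|\tau^{-m} b\|_{\infty, \unitint}$. This builds in the weight $t^m$ that will pair against the weight $s^{mq+1}$ appearing on the right-hand side of the claim.

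Inserting this pointwise bound into the definition of $\mathcal{E}_m^{[1]}[b](f)$ and invoking Minkowski's integral inequality in $L^q(T)$ produces
\[
  \|\mathcal{E}_m^{[1]}[b](f)\|_{q, T} \leq \frac{\|\tau^{-m} b\|_{\infty, \unitint}}{m!} \int_0^1 t^m \left( \int_T y^{mq} |f(x + ty)|^q \, dx \, dy \right)^{1/q} dt.
\]
For fixed $t$, I would evaluate the inner double integral via the change of variable $(x, y) \mapsto (s, y)$ with $s = x + ty$, which has unit Jacobian and carries $T$ onto $\{0 < s < 1,\ 0 < y < \min(s/t,\ (1-s)/(1-t))\}$. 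Performing the $y$-integration first and using the crude bound $y_{\max}(s, t) \leq s/t$ gives
\[
  \int_T y^{mq} |f(x + ty)|^q \, dx \, dy \leq \frac{1}{(mq + 1)\, t^{mq + 1}} \int_0^1 |f(s)|^q s^{mq + 1} \, ds = \frac{\|f\|_{q, \unitint, mq+1}^q}{(mq+1)\, t^{mq+1}}.
\]

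Substituting this back, the combined $t$-dependence collapses to $t^{m - (mq + 1)/q} = t^{-1/q}$, and the elementary computation $\int_0^1 t^{-1/q} \, dt = q/(q - 1)$ produces exactly the claimed constant, once the harmless factor $(mq + 1)^{-1/q} \leq 1$ is discarded. The only point requiring any care is the bookkeeping in the change of variables---identifying the image of $T$ and choosing the favourable side of the piecewise expression for $y_{\max}(s, t)$---but after that step the estimate drops out, the structural observation being simply that the $m$-fold vanishing of $b$ at the origin furnishes precisely the decay needed to absorb the singular $s^{mq + 1}$ weight.
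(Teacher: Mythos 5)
Your proof is correct, and it reaches the stated constant (in fact a slightly sharper one, with the extra factor $(mq+1)^{-1/q}$), but it is organized differently from the paper's argument. The paper keeps the $t$-integral inside, uses the pointwise bound $y^m \leq t^{-m}(x+ty)^m$ to transfer the weight onto $f$, and then, for each fixed $x$, recognizes $\frac{1}{y}\int_x^{x+y} u^m|f(u)|\,du$ as a Hardy average in the variable $z = x+y$; Hardy's inequality supplies the constant $q/(q-1)$, and a final integration in $x$ produces the extra power of $t$ in the weight $t^{mq+1}$. You instead pull the $t$-integral outside via Minkowski's integral inequality, absorb $|b(t)|$ into $t^m\|\tau^{-m}b\|_{\infty,\unitint}$, and for each fixed $t$ perform a two-dimensional change of variables on $T$, with the $y$-integration generating the weight $s^{mq+1}$ and the residual $t^{-1/q}$ integrating to $q/(q-1)$. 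The two routes are essentially dual: since the textbook proof of Hardy's inequality is precisely Minkowski applied to $\int_0^1 g(zv)\,dv$ followed by the computation $\int_0^1 v^{-1/q}\,dv = q/(q-1)$, your argument amounts to unrolling the paper's black-box invocation of Hardy into its elementary proof, at the cost of the slightly more delicate bookkeeping of the image $\{0 < s < 1,\ 0 < y < \min(s/t, (1-s)/(1-t))\}$ and the choice of the bound $y_{\max} \leq s/t$. What your version buys is self-containedness and a marginally better constant; what the paper's version buys is that the two-dimensional geometry never has to be examined, since everything reduces to one-dimensional averages on the fibers $\{x\} \times (0, 1-x)$. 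Both arguments, as the paper notes in its subsequent remark, survive replacing $b$ by $|b|$.
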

	\begin{proof}
		Let $f \in L^q(\unitint; t^{mq+1} dt)$, $1 < q < \infty$, and $0 \leq x \leq 1$. Using that $y(x + ty)^{-1} < t^{-1}$ for $0 \leq y \leq 1-x$ and $0 < t < 1$, we obtain 
		\begin{align*}
			\left| y^m \int_{0}^{1} b(t) f(x+ty) \ dt \right|  &\leq \int_{0}^{1} |t^{-m} b(t)| (x + ty)^m |f(x+ty)| \ dt,
		\end{align*}
		and so
		\begin{multline*}
			\| \tau^{-m} b\|_{\infty, \unitint}^{-q} \int_{0}^{1-x} \left| y^m \int_{0}^{1} b(t) f(x+ty) \ dt \right|^q \ dy \\
			\leftstackrel{\substack{u=x+ty \\ z = x + y}}{\leq} \int_{x}^{1} \left( \frac{1}{z-x} \int_{x}^{z}  |u^m f(u)| \ du \right)^q \ dz \leq  \left( \frac{q}{q-1} \right)^q  \|f\|_{q, (x, 1), mq+1}^q
		\end{multline*}
		by Hardy's inequality \cite[Theorem 327]{Hardy52}. Additionally,
		\begin{align*}
		\int_{0}^{1}  \int_{x}^{1} |t^m f(t)|^q \ dt \ dx = \int_{0}^{1} |f(t)|^q t^{mq} \int_{0}^{t} \ dx \ dt =   \int_{0}^{1} | f(t)|^q  t^{mq+1} \ dt.
		\end{align*}
		\Cref{eq:e0 l2-by-weighted-l2-bound} now follows on collecting results. 
	\end{proof}
	\begin{remark}
		\label{rem:weighted-lq-bound}
		The same arguments show that \cref{eq:e0 l2-by-weighted-l2-bound} holds with $b$ replaced by $|b|$.
	\end{remark}
	
	\begin{lemma}
		\label{lem:emq1-hm-stability}
		For $m, r \in \mathbb{N}_0$, real $0 \leq \beta \leq m$, and $b \in C^{\infty}_c(\unitint)$, there holds
		\begin{align}
			\label{eq:emq1-hm-stability}
			\|\mathcal{M}_{m, r}^{[1]}[b](f)\|_{\beta, q, T} \lesssim_{b, m, r, \beta, q}
			\| f \|_{q, \unitint, (m-\beta)q+1} \qquad \forall f \in L^q(\unitint; t^{(m-\beta)q + 1}dt).
		\end{align}
	\end{lemma}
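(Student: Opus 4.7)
The plan is to prove the estimate for integer $\beta \in \{0, 1, \ldots, m\}$ by an explicit derivative computation and then extend to real $\beta$ by real interpolation. For $\beta = 0$, the key observation is the pointwise inequality $0 \le x/(x+ty) \le 1$ valid for $(x,y) \in T$ and $t \in \unitint$, which yields
\[
|\mathcal{M}_{m,r}^{[1]}[b](f)(x,y)| \le \mathcal{E}_m^{[1]}[|b|](|f|)(x,y).
\]
Applying \cref{lem:weighted-lq-e0} in the form noted in \cref{rem:weighted-lq-bound} immediately gives $\|\mathcal{M}_{m,r}^{[1]}[b](f)\|_{q, T} \lesssim \|f\|_{q, \unitint, mq+1}$.

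For integer $\beta = j \in \{1, \ldots, m\}$, I would differentiate, exploiting the factorization $\mathcal{M}_{m, r}^{[1]}[b](f) = x^r \mathcal{E}_m^{[1]}[b](\tau^{-r} f)$. Integration by parts in $t$ (using $\partial_x g(x+ty) = y^{-1}\partial_t g(x+ty)$ and the fact that $b \in C_c^\infty(\unitint)$ vanishes at the endpoints) gives the reduction identities
\[
\partial_x \mathcal{E}_m^{[1]}[b](g) = \tfrac{1}{m}\,\mathcal{E}_{m-1}^{[1]}[b'](g), \qquad \partial_y \mathcal{E}_m^{[1]}[b](g) = -\mathcal{E}_{m-1}^{[1]}[b](g) + \tfrac{1}{m}\,\mathcal{E}_{m-1}^{[1]}[b + \tau b'](g),
\]
for $m \ge 1$. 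Iterating these shows that any $\alpha$-th derivative $D^\alpha \mathcal{E}_m^{[1]}[b](g)$ is a finite linear combination of operators $\mathcal{E}_{m - |\alpha|}^{[1]}[\tilde b](g)$ with $\tilde b \in C_c^\infty(\unitint)$ (coefficients depending only on $b, m, \alpha$). Applying Leibniz to the factor $x^r$ then writes each $D^\alpha \mathcal{M}_{m,r}^{[1]}[b](f)$ with $|\alpha| = j$ as a sum of terms $C\, x^{r-k}\, \mathcal{E}_{m - j + k}^{[1]}[\tilde b](\tau^{-r} f)$ for $0 \le k \le \min(\alpha_1, r)$. On each such term, the bound $(x/(x+ty))^{r-k} \le 1$ gives the pointwise estimate
\[
\bigl|x^{r-k}\,\mathcal{E}_{m-j+k}^{[1]}[\tilde b](\tau^{-r} f)\bigr| \le \mathcal{E}_{m-j+k}^{[1]}[|\tilde b|]\bigl(\tau^{-k}|f|\bigr),
\]
so \cref{lem:weighted-lq-e0} with \cref{rem:weighted-lq-bound} yields an $L^q(T)$ bound $\lesssim \|\tau^{-k} f\|_{q, \unitint, (m-j+k)q+1} = \|f\|_{q, \unitint, (m-j)q+1}$, the exponents collapsing because $(m-j+k)q + 1 - kq = (m-j)q+1$. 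Summing all such terms and noting that $\|f\|_{q, \unitint, (m-|\alpha|)q+1} \le \|f\|_{q, \unitint, (m-j)q+1}$ for $|\alpha| \le j$ (the weight $t^{(m-|\alpha|)q+1}$ is smaller on $\unitint$) finishes the integer case.

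For real $\beta \in (0, m) \setminus \mathbb{N}$, I would invoke real interpolation with $\theta = \beta - \lfloor \beta \rfloor$, together with the standard equivalences
\[
(W^{\lfloor \beta \rfloor, q}(T), W^{\lceil \beta \rceil, q}(T))_{\theta, q} = W^{\beta, q}(T),
\]
\[
(L^q(\unitint; t^{a_0} dt), L^q(\unitint; t^{a_1} dt))_{\theta, q} = L^q(\unitint; t^{(1-\theta)a_0 + \theta a_1} dt),
\]
applied with the integer-endpoint weights $a_i = (m - \lfloor \beta \rfloor - i)q + 1$, $i \in \{0, 1\}$; since $(1-\theta)a_0 + \theta a_1 = (m-\beta)q+1$, the integer estimates transfer to \cref{eq:emq1-hm-stability} at the fractional level. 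The main obstacle, and essentially the only delicate step, is the bookkeeping in the combined Leibniz-plus-iteration expansion of Step 2: one must verify that, no matter how the $|\alpha|$ derivatives split between $x^r$ and $\mathcal{E}_m^{[1]}$, the weighted-$L^q$ exponent on $f$ that emerges after applying $(x/(x+ty))^{r-k} \le 1$ and \cref{lem:weighted-lq-e0} is independent of the split and equals $(m-|\alpha|)q+1$; the interpolation step itself is standard modulo citing the two equivalences above.
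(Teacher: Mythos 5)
Your proposal is correct and follows essentially the same route as the paper's proof: differentiate, push derivatives onto the kernel $b$ by integration by parts in $t$, use the elementary pointwise bounds $x \leq x + ty$ and $y/(x+ty) \leq t^{-1}$ to reduce each term to an operator of the form covered by \cref{lem:weighted-lq-e0} (with the weight $\tau^{-k}$ placed on $f$ where the paper places $\tau^{i_1-\alpha_1}$ in the kernel — an immaterial difference since the weighted norms coincide), and then conclude by real interpolation of the weighted $L^q$ scale against the Sobolev scale, exactly as in the paper.
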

	\begin{proof}
		Let $m$, $r$ and $b$ be as in the statement of the lemma and $f \in C^{\infty}_c(\unitint)$. Let $j \in \{0, 1, \ldots, m\}$, $\alpha \in \mathbb{N}_0^2$ with $|\alpha| = j$, and $l_1 = \max(\alpha_1 - r, 0)$.  For $0 \leq i_1 \leq \alpha_1$ and $0 \leq i_2 \leq \alpha_2$, we apply the identities
		\begin{alignat*}{2}
			\partial_y \{ g(x + ty) \} &= t g'(x + ty) & &= t y^{-1} \partial_t \{ g(x + ty) \}, \\ 
			\partial_x \{ g(x + ty) \} &= g'(x + ty) & &= y^{-1} \partial_t \{ g(x + ty) \},
		\end{alignat*}
		and integrate by parts $i_1 + i_2 \leq k \leq m$ times to obtain
		\begin{align*}
			\int_{\unitint} b(t) \partial_{y}^{i_2} \partial_x^{i_1} \left\{ \frac{f(x + ty)}{(x+ty)^q} \right\} \ dt &= y^{-(i_1 + i_2)} \int_{\unitint} b(t) t^{i_2}  \partial_t^{i_1 + i_2}  \left\{ \frac{f(s)}{s^r} \middle\} \right|_{s = x + ty} \ dt, \\
			&= (-1)^{i_1 + i_2} y^{-(i_1 + i_2)} \int_{\unitint} \underbrace{\partial_t^{i_1 + i_2}\{ b(t) t^{i_2}\}}_{=b_{i_1, i_2}}  \frac{ f(x + ty)}{(x+ty)^r}  \ dt, \\
			&= (-1)^{i_1 + i_2} y^{-(i_1 + i_2)} \mathcal{E}_0^{[1]}[b_{i_1, i_2}]\left( \tau^{-r} f \right)(x, y).
		\end{align*}
		and so 		
		\begin{align*}
			& (-1)^{m} D^{\alpha} \mathcal{M}_{m, r}^{[1]}(f)(x, y) \\
			&\qquad = \sum_{\substack{0 \leq i_1 \leq \alpha_1 \\ 0 \leq i_2 \leq \alpha_2}} \binom{\alpha_1}{i_1} \binom{\alpha_2}{i_2} \partial_x^{\alpha_1 - i_1} \{ x^r \} \partial_y^{\alpha_2 - i_2} \left\{ \frac{y^m}{m!} \right\} \int_{\unitint} b(t) \partial_{y}^{i_2} \partial_x^{i_1} \left\{ \frac{f(x + ty)}{(x+ty)^q} \right\} \ dt, \\
			&\qquad=  \sum_{\substack{l_1 \leq i_1 \leq \alpha_1 \\ 0 \leq i_2 \leq \alpha_2}}  c_{r, \alpha, i_1, i_2} x^{r-\alpha_1 + i_1} y^{m- i_1 - \alpha_2} \mathcal{E}_0^{[1]}[b_{i_1, i_2}]\left( \tau^{-r} f \right)(x, y),
		\end{align*}
		where
		\begin{align*}
			b_{i_1, i_2} &:= \partial_t^{i_1 + i_2} \{ b(t) t^{i_2} \} \quad \text{and} \quad
			c_{r, \alpha, i_1, i_2} := \frac{ (-1)^{m + i_1 + i_2} \binom{\alpha_1}{i_1} \binom{\alpha_2}{i_2} r!}{(r-\alpha_1 + i_1)! (m-\alpha_2 + i_2)!}.
		\end{align*}
		Since $x \leq x + sy$ and $\frac{y}{x + sy} \leq s^{-1}$ for $(x, y) \in T$, $0 \leq s \leq 1$, there holds
		\begin{align*}
			\left| x^{r-\alpha_1 + i_1} y^{m-i_1-\alpha_2} \mathcal{E}_0^{[1]}[  b_{i_1, i_2} ] ( \tau^{-r} f ) \right| & \leq y^{m-i_1-\alpha_2} \int_{0}^{1} \left| b_{i_1, i_2}(t) \frac{f(x + ty)}{(x+ty)^{\alpha_1 - i_1}} \right| \ dt \\
			& \leq y^{m-j} \int_{0}^{1} | t^{i_1 - \alpha_1} b_{i_1, i_2}(t) f(x + ty) | \ dt.
		\end{align*}
		Since $b \in C^{\infty}_c(\unitint)$, the function $t^{i_1 - \alpha_1} b_{i_1, i_2} \in C^{\infty}_c(\unitint)$, and so \cref{eq:e0 l2-by-weighted-l2-bound,rem:weighted-lq-bound} gives
		\begin{align*}
			\| D^{\alpha} \mathcal{M}_{m, r}^{[1]}(f) \|_{q, T} \lesssim_{b, m, r, j, q}  \| \mathcal{E}_{m-j}^{[1]}[\tau^{i_1-\alpha_1} |b_{i_1, i_2}| ](|f|) \|_{q, T} \lesssim_{b, m, r, q} \|f\|_{q, \unitint,  (m-j)q+1}.
		\end{align*}	
		By density, $\mathcal{M}_{m, r}^{[1]}$ is a bounded operator from $L^q(\unitint; t^{(m-j)q + 1}dt)$ into $W^{j, q}(T)$. By the real method of interpolation (see e.g. \cite{BerLof76}), 
		\begin{align*}
			\mathcal{M}_{m, r}^{[1]} : [L^q(\unitint; t^{(m-j)q + 1}dt), L^q(\unitint; t^{(m-j-1)q + 1}dt)]_{\theta, q} \to  [W^{j, q}(T), W^{j+1, q}(T)]_{\theta, q}
		\end{align*} 
		is linear and continuous for any $0 \leq \theta \leq 1$ and $j \in \{0, 1, \ldots, m-1\}$. It is well-known that (see e.g. \cite[Theorem 5.4.1]{BerLof76})  
		\begin{align*}
			L^q(\unitint; t^{(m-j-\theta)q+1} \ dt) = [L^q(\unitint; t^{(m-j)q + 1}dt), L^q(\unitint; t^{(m-j-1)q + 1}dt)]_{\theta, q}
		\end{align*}
		and that (see e.g. \cite[Theorem 14.2.3]{Brenner08})
		\begin{align}
			\label{eq:proof:sobolev-interp-tri}
			W^{j+\theta, q}(T) = [W^{j, q}(T), W^{j+1, q}(T)]_{\theta, q}.
		\end{align}
		\Cref{eq:emq1-hm-stability} now follows.
	\end{proof}
	
	\subsection[The E operator]{The operator $\mathcal{E}_{m}$}
	\label{sec:em1-stability}
	
	The next result concerns the $W^{s, q}(T)$ stability of the operator $\mathcal{E}_m$.
	\begin{lemma}
		\label{lem:em-wsp-bound}
		Let $m \in \mathbb{N}_0$ and $b \in C_c^{\infty}(\unitint)$. For all $(s, q) \in \mathcal{A}_m$, there holds
		\begin{align}
			\label{eq:em-wsp-bound}
			\left\| \mathcal{E}_m[b]\left( f \right) \right\|_{s, q, T} \lesssim_{b, m, s, q} \|f\|_{s - m - \frac{1}{q}, q, \unitint} \qquad \forall f \in W^{s-m-\frac{1}{q}, q}(\unitint).
		\end{align}
	\end{lemma}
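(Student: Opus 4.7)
The plan is to mirror the proof of \cref{lem:emq1-hm-stability}: establish \cref{eq:em-wsp-bound} at every integer order $s = m+k$, $k \in \mathbb{N}$, and then recover the fractional orders by real interpolation, using the identification \cref{eq:proof:sobolev-interp-tri} of fractional Sobolev spaces as real interpolation spaces.

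For the integer case, I would write $\mathcal{E}_m[b](f) = \frac{(-y)^m}{m!}\mathcal{E}_0[b](f)$ and compute $D^\alpha \mathcal{E}_m[b](f)$ for $|\alpha| = m+k$ via the Leibniz rule. Since the prefactor contributes at most $m$ nontrivial derivatives, at least $k$ of the $|\alpha|=m+k$ derivatives land on the kernel $\int b(t) f(x+ty)\,dt$. Exploiting the identities $\partial_x[f(x+ty)] = y^{-1}\partial_t[f(x+ty)]$ and $\partial_y[f(x+ty)] = t y^{-1}\partial_t[f(x+ty)]$, each such kernel-derivative is converted, via an integration by parts in $t$ (whose boundary contributions vanish since $b \in C_c^\infty(\unitint)$), into an extra factor of $y^{-1}$ and a $t$-derivative on the bump. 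Careful bookkeeping, analogous to that in the proof of \cref{lem:emq1-hm-stability}, then shows that $D^\alpha \mathcal{E}_m[b](f)$ decomposes into a finite sum of terms of the form $y^{-k}\mathcal{E}_0[\hat b_\alpha](f)$, where each $\hat b_\alpha \in C_c^\infty(\unitint)$ depends only on $b$, $m$, and $\alpha$, and crucially satisfies the vanishing-moment conditions $\int_0^1 \hat b_\alpha(t)\, t^l\,dt = 0$ for $l = 0, 1, \ldots, k-1$ (these moments arise naturally from the $k$ successive boundary-term cancellations during the integrations by parts).

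The vanishing moments allow me to subtract from $f(x+ty)$ in the integrand its $(k-1)$-st Taylor polynomial at $x$ without changing the value of $\mathcal{E}_0[\hat b_\alpha](f)$. Representing the resulting remainder by a single integral in the increment $f^{(k-1)}(x+r) - f^{(k-1)}(x)$ (with the obvious simplification $f(x+ty) - f(x)$ when $k=1$) and applying Hardy/Jensen-type estimates followed by the change of variables $r = u y$ bounds $\|y^{-k}\mathcal{E}_0[\hat b_\alpha](f)\|_{q, T}$ by the Slobodeckij seminorm $|f^{(k-1)}|_{1-1/q, q, \unitint}$, which is part of $\|f\|_{k-1/q, q, \unitint}$. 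This establishes \cref{eq:em-wsp-bound} at every integer order $s = m+k$; real interpolation between the integer endpoints $(W^{k-1/q, q}(\unitint), W^{m+k, q}(T))$ and $(W^{k+1-1/q, q}(\unitint), W^{m+k+1, q}(T))$ with parameter $\theta = s - m - k \in (0, 1)$ then yields \cref{eq:em-wsp-bound} for all $(s,q) \in \mathcal{A}_m$.

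The main obstacle is verifying the vanishing-moment conditions $\int_0^1 \hat b_\alpha(t)\, t^l\,dt = 0$ for $0 \leq l \leq k-1$: these are essential for cancelling the apparent $y^{-k}$ singularity at the edge $y = 0$ and leaving a controllable fractional Sobolev quantity in $f^{(k-1)}$. Once they are in hand, the remaining steps --- Taylor-remainder representations, Hardy-type change-of-variable arguments, and real interpolation --- are routine.
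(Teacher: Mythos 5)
Your argument is correct, but it follows a genuinely different route from the paper. The paper's proof of \cref{lem:em-wsp-bound} is a reduction: it introduces the whole-plane operator $\tilde{\mathcal{E}}_m(g)(x,y) = \chi(y)\frac{(-y)^m}{m!}\int_{\mathbb{R}}\tilde b(t)g(x+ty)\,dt$ with a cutoff $\chi$, imports the estimate $\|\tilde{\mathcal{E}}_m(g)\|_{s,q,\mathbb{R}^2}\lesssim \|g\|_{s-m-1/q,q,\mathbb{R}}$ wholesale from \cite[Lemma 4.2]{Arn88}, and then concludes by composing with a bounded Stein-type extension of $f$ from $\unitint$ to $\mathbb{R}$ and restricting to $T$. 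You instead reconstruct, from scratch, essentially the classical Gagliardo-type proof that underlies the cited lemma: the decomposition of $D^{\alpha}\mathcal{E}_m[b](f)$ into terms $y^{-k}\mathcal{E}_0[\hat b_\alpha](f)$ with $\hat b_\alpha = \pm\partial_t^{i}\{t^{i_2}b(t)\}$ is exactly the integration-by-parts bookkeeping used in the paper's proof of \cref{lem:emq1-hm-stability}, and the vanishing moments $\int_0^1 \hat b_\alpha(t)t^l\,dt=0$ for $l\le k-1$ follow immediately since $\partial_t^i\{t^l\}=0$ for $l<i$ and $b$ is compactly supported, so the obstacle you flag is not actually an obstacle. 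The subsequent Taylor-remainder representation, the Jensen/Hardy estimate reducing $\|y^{-k}\mathcal{E}_0[\hat b_\alpha](f)\|_{q,T}$ to the Slobodeckij seminorm of $f^{(k-1)}$ (note $\beta q + 1 = q$ for $\beta = 1-1/q$, which is what makes the change of variables $r = uy$ close), and the final real interpolation in both source and target are all sound; the source-space interpolation identity $[W^{k-1/q,q}(\unitint),W^{k+1-1/q,q}(\unitint)]_{\theta,q}=W^{k+\theta-1/q,q}(\unitint)$ needs exactly the restriction $s-1/q\notin\mathbb{Z}$ unless $q=2$ that is already built into $\mathcal{A}_m$, and is of the same type as \cref{eq:left-space-interpolation}. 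What your route buys is a self-contained proof that never leaves the interval $\unitint$ (no extension operator for $f$ is needed, since $x+ty\in\unitint$ for all $(x,y)\in T$, $t\in\unitint$) and that exposes the cancellation mechanism explicitly; what the paper's route buys is brevity and a single point of reliance on the literature. You should also record, for completeness, that the intermediate derivatives $m<|\alpha|<m+k$ and the low-order derivatives $|\alpha|\le m$ are controlled by the same scheme (with $k$ replaced by $|\alpha|-m$) and by \cref{lem:emq1-hm-stability}, respectively, so that the full $W^{m+k,q}(T)$ norm is bounded.
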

	\begin{proof}
		Let $m \in \mathbb{N}_0$, $b \in C^{\infty}_c(I)$, and $(s, q) \in \mathcal{A}_m$ be given. Let $\chi \in C^{\infty}_c(\mathbb{R})$ be any fixed smooth function satisfying $\chi \equiv 1$ on $[0, 1]$ and $\chi = 0$ on $\mathbb{R} \setminus [-1, 2]$ and let $\tilde{b}$ denote the zero extension of $b$ to $\mathbb{R}$. For $g \in C^{\infty}_c(\mathbb{R})$, define
		\begin{align*}
			\tilde{\mathcal{E}}_m(g)(x, y) = \chi(y) \frac{(-y)^m}{m!} \int_{\mathbb{R}}  \tilde{b}(t) g(x + ty) \ dt, \qquad (x, y) \in \mathbb{R}.
		\end{align*}
		Thanks to \cite[Lemma 4.2]{Arn88}, there holds
		\begin{align}
			\label{eq:proof:whole-space-extension-bound}
			\| \tilde{\mathcal{E}}_m(g) \|_{s, q, \mathbb{R}^2} \lesssim_{b, \chi, m, s, q} \|g\|_{s-m-\frac{1}{q}, q, \mathbb{R}} \qquad \forall g \in C^{\infty}_c(\mathbb{R}).
		\end{align}
		By density, \cref{eq:proof:whole-space-extension-bound} holds for all $g \in W^{s-m-\frac{1}{q}, q}(\mathbb{R})$.
		
		Let $f \in W^{s-\frac{1}{q}, q}(\unitint)$ and let $\tilde{f}$ denote an extension of $f$ to $\mathbb{R}$ satisfying \\
		$\|\tilde{f}\|_{s-\frac{1}{q}, q, \mathbb{R}} \lesssim_{s, q} \|f\|_{s - \frac{1}{q}, q, \unitint}$ and $\tilde{f}|_{\unitint} = f$, e.g. \cite{Devore93}. Applying \cref{eq:proof:whole-space-extension-bound} then gives
		\begin{align*}
			\| \mathcal{E}_m[b]( f ) \|_{s, q, T} = 	\| \tilde{\mathcal{E}}_m[\tilde{b}]( \tilde{f} ) \|_{s, q, T} \lesssim_{b, s, q} \| \tilde{f}\|_{s-m-\frac{1}{q}, q, \mathbb{R}} \lesssim_{s, q} \|f\|_{s-m-\frac{1}{q}, q, \unitint}.
		\end{align*}
	\end{proof}

	We are now in a position to prove \cref{lem:em3-derivative-interp-and-stability}.
	
	\begin{proof}[Proof of \cref{lem:em3-derivative-interp-and-stability}]
		For all $(s, q) \in \mathcal{A}_m$, \cref{eq:em-wsp-bound,eq:emq1-hm-stability} give
		\begin{alignat*}{2}
			\| \mathcal{E}_m^{[1]}[b](f) \|_{\beta, q, T} &\lesssim_{b, m, \beta, q} \|f\|_{q, \unitint, \beta q + 1} \qquad & &\forall f \in L^q(\unitint; t^{\beta q + 1}dt), \ 0 \leq \beta \leq m, \\
			\| \mathcal{E}_{m}^{[1]}[b](f)\|_{s, q, T} &\lesssim_{b,m,s,q} \|f\|_{s-m-\frac{1}{q}, q, \unitint} \qquad & &\forall f \in W^{s-m-\frac{1}{q}, q}(I).
		\end{alignat*}
	 	Additionally, for any $f \in C^{\infty}(\bar{\unitint})$, there holds
		\begin{align*}
			\partial_y^j \mathcal{E}_m^{[1]}(f)(x, y) = \sum_{i=0}^{j} \binom{j}{i} \frac{(-1)^{m+j} y^{m-i}}{(m-i)!} \int_{\unitint} b(t) t^{j-i} f^{(j-i)}(x + ty) \ dt, \qquad 0 \leq j \leq m,
		\end{align*}
		and so $\partial_y^j \mathcal{E}_m^{[1]}(f)(x, 0) = f(x) \delta_{jm}$ for $0 \leq x \leq 1$. Moreover, if $f \in \mathcal{P}_p(\unitint)$, $p \in \mathbb{N}_0$, then direct verification reveals that $\mathcal{E}_m^{[1]}(f) \in \mathcal{P}_{p+m}(T)$. 
		
		The result for $f \in W^{s, q}(\gamma_1)$ now follows from the smoothness of the map $\varphi_1$ \cref{eq:edge1-mapping}, while the result for $\mathcal{E}_{m}^{[i]}$, $i \in \{2, 3\}$, follows from the chain rule and the smoothness of the mappings $R$ and $R^{-1}$.
	\end{proof}
	
	\begin{remark}
		\label{rem:stability-without-int-condition}
		Note that the above proof shows that \cref{eq:em3-stability} holds without the restriction $\int_{I} b(t) \ dt = 1$.
	\end{remark}

	\subsection[Proof of \cref{lem:mmr3-derivative-interp}]{Proof of \cref{lem:mmr3-derivative-interp}}	
		For $k \in \mathbb{N}_0$ and $\beta \in [0, 1)$, define $W_L^{k+ \beta, q}(\unitint)$ by identifying $\gamma_1$ with $\unitint$. Let $(s, q) \in \mathcal{A}_m$. We first prove the following for $m, r \in \mathbb{N}_0$, $b \in C^{\infty}_c(\unitint)$, and $f \in W^{s-m-\frac{1}{q}, q}(I) \cap W_L^{\min\{ s-m, r\} - \frac{1}{q}, q}(I)$:
		\begin{align}
			\label{eq:proof:mmr1-stability}
			\|\mathcal{M}_{m, r}^{[1]}[b](f)\|_{\beta, q, T} \lesssim_{b, m, r, \beta, q} \begin{cases}
				\leftnorm{f}{\beta-m-\frac{1}{q}, q, \unitint} & \text{if }  \beta \leq m+r, \\
				\|f\|_{\beta-m-\frac{1}{q}, q, \unitint} & \text{if } \beta > {m+r},
			\end{cases}
		\end{align}		
		where ${m + 1 \leq \beta \leq s}$, $(\beta, q) \in \mathcal{A}_m$, and $W_L^{- \frac{1}{q}, q}(I) := L^q(I)$ for notational convenience. 
		
		We proceed by induction on $r$. The case $r = 0$ is a consequence of \cref{eq:em3-stability,rem:stability-without-int-condition}. Now assume that \cref{eq:proof:mmr1-stability} holds for some fixed $r \geq 0$ and all $m \in \mathbb{N}_0$ and $b \in C^{\infty}_c(\unitint)$. Let $f \in W^{s-m-\frac{1}{q}, q}(\unitint) \cap W_L^{\min\{s-m, r+1\} - \frac{1}{q}, q}(\unitint)$. The following identity will be useful: 
		\begin{align*}
			\mathcal{M}_{m, r}^{[1]}(f) - \mathcal{M}_{m, r+1}^{[1]}(f) &= x^r \frac{(-y)^m}{m!} \int_\unitint b(t) \frac{f(x + ty)}{(x+ty)^r} \left\{ 1 - \frac{x}{x+ ty} \right\} \ dt \\
			&= -x^r \frac{(-y)^{m+1}}{m!} \int_\unitint t b(t) \frac{f(x + ty)}{(x+ty)^{r+1}} \ dt \\
			&= -(m+1) \mathcal{M}_{m+1, r}[\tau b]\left(\tau^{-1} f \right)(x, y).
		\end{align*}
		First consider the case ${s = m+1}$. Thanks to \cref{eq:fovertkp12-bound}, there holds ${\tau^{-1} f \in L^q(\unitint; t dt)}$ and
		\begin{align*}
			 \| \tau^{-1} f\|_{q, \unitint, {1}} = \| {\tau^{-(1-\frac{1}{q})} f}\|_{q, \unitint} 
			\lesssim_{s, q} \leftnorm{f}{{1-\frac{1}{q}}, q, I}.
		\end{align*}
		Now applying \cref{eq:emq1-hm-stability} gives
		\begin{align}
			\label{eq:proof:small-s-bound}
			\| \mathcal{M}_{m+1, r}^{[1]}[\tau b](\tau^{-1} f) \|_{{m+1}, q, T} \lesssim_{b, m, r, q} \leftnorm{f}{{1-\frac{1}{q}}, q, I}.
 		\end{align}
		Now consider the case ${2 \leq s - m \leq r+1}$ for $r \geq 1$. Then, $\tau^{-1} f \in W_L^{s-m-1 - \frac{1}{q}}(\unitint)$ by \cref{eq:fovert-left-bound}, which combined with the inductive hypothesis gives
		\begin{align}
			\label{eq:proof:medium-s-bound}
			\| \mathcal{M}_{m+1, r}^{[1]}[\tau b](\tau^{-1} f) \|_{s, q, T} \lesssim_{b, m, r, s, q} \leftnorm{\tau^{-1} f}{s-m-1-\frac{1}{q}, q, \unitint} \lesssim_{s, q} \leftnorm{f}{s-m-\frac{1}{q}, q, \unitint}.
		\end{align}	
		Now let $s-m > r + 1$. By \cref{eq:fovert-bound,eq:fovert-left-bound} $\tau^{-1} f \in W^{s-m-1 - \frac{1}{q}}(\unitint) \cap W_L^{r - \frac{1}{q}}(\unitint)$, and the inductive hypothesis and \cref{eq:fovert-bound} give
		\begin{align*}
			\| \mathcal{M}_{m+1, r}^{[1]}[\tau b](\tau^{-1} f) \|_{s, q, T} \lesssim_{b, m, r, s, q} \|\tau^{-1} f\|_{s-m-1-\frac{1}{q}, q, \unitint} \lesssim_{s, q} \|f\|_{s-m-\frac{1}{q}, q, \unitint}.
		\end{align*}
		Thanks to the triangle inequality, we have shown that
		\begin{align*}
			\|\mathcal{M}_{m, r+1}^{[1]}[b](f)\|_{s, q, T} \lesssim_{b, m, r, s, q} \begin{cases}
				\leftnorm{f}{s-m-\frac{1}{q}, q, \unitint} & \text{if } {s = m+1,} \\
				 	\leftnorm{f}{s-m-\frac{1}{q}, q, \unitint} & \text{if } {2 \leq s-m \leq r+1}, \\
				\|f\|_{s-m-\frac{1}{q}, q, \unitint} & \text{if } s-m > r+1,
			\end{cases}
		\end{align*}	
		for any $b \in C^{\infty}_c(\unitint)$ and all $f \in W^{s-m-\frac{1}{q}, q}(\unitint) \cap W_L^{\min\{s-m, r+1\} - \frac{1}{q}, q}(\unitint)$, where $(s, q) \in \mathcal{A}_m$. 
		
		For the remaining case ${1 < s-m < 2}$, $(s, q) \in \mathcal{A}_m$, and $r \geq 1$, we apply an interpolation argument. More specifically,  $\mathcal{M}_{m, r+1}^{[1]}[b]$ maps  $W_L^{1 - \frac{1}{q}, q}(\unitint)$ into $W^{m+1, q}(T)$ and $W_L^{2 - \frac{1}{q}, q}(\unitint)$ into $W^{m+2, q}(T)$. Consequently,  
		\begin{align*}
			\mathcal{M}_{m, r+1}^{[1]}[b] : [W_L^{1 - \frac{1}{q}, q}(\unitint), W_L^{2 - \frac{1}{q}, q}(\unitint) ]_{\theta, q} \to [W^{m+1, q}(T), W^{m+2, q}(T)]_{\theta, q}
		\end{align*}	
		for any $0 \leq \theta \leq 1$. Choosing $\theta = s - m - 1$ and applying \cref{eq:left-space-interpolation,eq:proof:sobolev-interp-tri} gives that $\mathcal{M}^{[1]}_{m, r+1}[b]$ maps $W_L^{s-\frac{1}{q}, q}(\unitint)$ into $W^{m+s, q}(T)$. This completes the proof of \cref{eq:proof:mmr1-stability}. \Cref{eq:mmr3-stability} now follows from the smoothness of \cref{eq:edge1-mapping}. Direct computation then shows  \cref{eq:mmr3-derivative-interp}.
		
		Suppose further that $f \in \mathcal{P}_p(\gamma_1)$, $p \in \mathbb{N}_0$ with $\partial_{t}^{i} f(\bdd{a}_{2}) = 0$ for $0 \leq i \leq r-1$. Then, $t^{-r} f \circ \varphi(t) \in \mathcal{P}_{p-r}(\unitint)$, and so $\mathcal{M}_{m, r}^{[1]}(f) \in \mathcal{P}_{p+m}(T)$.
		
		The result for $\mathcal{M}_{m, r}^{[i]}$, $i \in \{2, 3\}$, follows from the chain rule and the smoothness of the mappings $R$ and $R^{-1}$. \hfill \proofbox

	 	\subsection[Proof of \cref{lem:smr3-derivative-interp}]{Proof of \cref{lem:smr3-derivative-interp}}	
	 	
	 	Let  $m \in \mathbb{N}_0$, $r \in \mathbb{N}$, $b \in C^{\infty}_c(\unitint)$, $(s, q) \in \mathcal{A}_m$ be as in the statement of the lemma. Let $\xi_i, \eta_i \in \mathcal{P}_{i-1}(\unitint)$, $i \in \{1, 2, \ldots, r\}$, be the components from the partial fraction decomposition of $\{ t(1-t) \}^{-r}$:
	 	\begin{align*}
	 		\{t(1-t)\}^{-r} = \sum_{i=1}^{r} \left\{  \xi_i(t) t^{-i} + \eta_i(t) (1-t)^{-i}\right\}.
	 	\end{align*}
	 	Then, for $f \in W^{s-\frac{1}{q}, q}(\gamma_1) \cap W^{\min\{s, r\} - \frac{1}{q}, q}_{00}(\gamma_1)$, there holds
	 	\begin{align*}
	 		\mathcal{S}_{m ,r}^{[1]}(f)(x, y) &= x^{r-i} (1-x-y)^r \sum_{i=1}^{r}  \mathcal{M}_{m, i}^{[1]}[b]( \xi_i f )(x, y) \\
	 		&\qquad + x^i(1-x-y)^{r-i} \sum_{i=1}^{r}  \mathcal{M}_{m, i}^{[1]}[\hat{b}] ( \hat{\eta}_i \hat{f}  )(1-x-y, y),
	 	\end{align*}
 		where $\hat{b}(t) = b(1-t)$ and $\hat{\eta}_i(t) = \eta(1-t)$ for $t \in \unitint$, while $\hat{f}(x,y) = f(1-x, y)$ for $(x,y) \in \gamma_1$. Since $f \in W^{\min\{s, r\} - \frac{1}{q}, q}_{00}(\gamma_1)$, $f \in W^{\min\{s, r\} - \frac{1}{q}, q}_{L}(\gamma_1)$ and $\hat{f} \in W^{\min\{s, r\} - \frac{1}{q}, q}_{L}(\gamma_1)$, and so 
	 	\begin{align*}
	 		\|\mathcal{S}_{m ,r}^{[1]}(f)\|_{\beta, q, T} \lesssim_{m, r, \beta, q}  \sum_{i=1}^{r} \left\{ \| \mathcal{M}_{m, i}^{[1]}[b](\xi_i f) \|_{\beta, q, T} + \|\mathcal{M}_{m, i}^{[1]}[b] (\hat{\eta}_i \hat{f}) \|_{\beta, q, T} \right\}
	 	\end{align*}
	 	for $0 \leq \beta \leq s$. \Cref{eq:smr3-stability-low,eq:smr3-stability-high} now follow from the triangle inequality and \cref{eq:mmr3-stability} on noting that
	 	\begin{align*}
	 		\leftnorm{\xi_i f}{\beta-m-\frac{1}{q}, q, \unitint} + \leftnorm{\hat{\eta}_i \hat{f} }{\beta-m-\frac{1}{q}, q, \unitint} \lesssim_{m, r, \beta, q} \zznorm{f}{\beta-m-\frac{1}{q}, q, \unitint}
	 	\end{align*}
 		for ${m + 1 \leq \beta \leq m+r}$, $(\beta, q) \in \mathcal{A}_m$. Direct computation then gives \cref{eq:smr3-derivative-interp}.
	 	
	 	Suppose further that $f \in \mathcal{P}_p(\gamma_1)$, $p \in \mathbb{N}_0$ with $\partial_t^i f|_{\partial \gamma_1} = 0$ for $i \in \{0, 1, \ldots, r-1\}$. Then, $(d_2 d_3)^{-r} f \in \mathcal{P}_{p-r}(\gamma_1)$, and so $\mathcal{S}_{m, r}^{[1]}(f) \in \mathcal{P}_{p+m}(T)$.	
	 	
	 	The result for $\mathcal{S}_{m, r}^{[i]}$, $i \in \{2, 3\}$, follows from the chain rule and the smoothness of the mappings $R$ and $R^{-1}$. \hfill \proofbox
	
	\section{Generalization to arbitrary order normal derivatives}
	\label{sec:generalization}
	
	In \cref{sec:constructing-lifting}, we constructed an operator $\tilde{\mathcal{L}}$ that boundedly lifts a pair of functions defined on the boundary $\partial T$ to a single function defined on the whole triangle $T$. We now consider the generalized problem of boundedly lifting $m+1$ functions on $\partial T$ to one function on $T$. To make this statement precise, we first review the regularity of the traces of $u \in W^{s, q}(T)$, for $(s, q) \in \mathcal{A}_m$, as we did in \cref{sec:trace-review} for $(s, q) \in \mathcal{A}_1$. To this end, we define the $m$th-order trace operator $\sigma^m$, $m \in \mathbb{N}$, edge by edge according to the rule 
	\begin{multline*}
		\sigma_i^m(f^0, f^1, \ldots, f^m)_{j_1 j_2 \ldots, j_m} := \partial_t 	\sigma_i^{m-1}(f^0, f^1, \ldots, f^{m-1})_{j_1 j_2 \ldots, j_{m-1}} \cdot (\unitvec{t}_l)_{j_m} \\ + f^m \cdot (\unitvec{n}_l)_{j_1} (\unitvec{n}_l)_{j_2} \cdots (\unitvec{n}_l)_{j_m},
 	\end{multline*}
 	where $\sigma^0$ is defined in \cref{eq:sigma0-def}. Note that the above definition coincides with \cref{eq:sigma1-def} when $m=1$.

	Let $F = (u|_{\partial T}, \partial_n u|_{\partial T}, \ldots, \partial_n^m u|_{\partial T})$. Applying the same arguments as in \cref{sec:trace-review}, we see that $\sigma^m(F) = D^m u$ on $\partial T$, and so  \cref{eq:review:usual-trace-conditions} gives the edge regularity condition \cref{eq:sigmam-edge-reg}. Similarly, we obtain continuity conditions of $\sigma^m(F)$ from \cref{eq:review:usual-trace-conditions} and \cref{eq:review:sobolev-embed-cont} for particular values of $s$ and $q$ as stated in \cref{eq:sigmam-cont} with $l=0$. By forming mixed derivatives at a vertex using tangential derivatives of $\sigma^m(F)$, as we did with $\sigma^1$ in \cref{sec:trace-review}, we obtain additional conditions which we now describe. For a $d$-dimensional tensor $S$ and $v \in \mathbb{R}^2$, we define
	\begin{align*}
		{v}^{\otimes 0} \cdot S  = S \quad \text{and} \quad	{v}^{\otimes l} \cdot S  = S_{i_1 i_2 \ldots i_l} {v}_{i_1} {v}_{i_2} \cdots {v}_{i_l}, \qquad l \in \{1, 2, \ldots, d\}.
	\end{align*}
	Then, using the symmetry of the derivative tensors, we have
	\begin{multline*}
		 \unitvec{t}_{i+2}^{\otimes l} \cdot \partial_t^l \sigma_{i+1}^m(F)(\bdd{a}_{i})
		 = \unitvec{t}_{i+2}^{\otimes l} \cdot \partial_{t_{i+1}}^l D^m u(\bdd{a}_i) = \unitvec{t}_{i+2}^{\otimes l} \cdot \left( \unitvec{t}_{i+1}^{\otimes l} \cdot D^{m+l} u(\bdd{a}_i) \right)  \\
		= \unitvec{t}_{i+1}^{\otimes l} \cdot \left( \unitvec{t}_{i+2}^{\otimes l} \cdot D^{m+l} u(\bdd{a}_i) \right)  = \unitvec{t}_{i+1}^{\otimes l} \cdot \partial_{t_{i+2}}^l D^m u(\bdd{a}_i) =  \unitvec{t}_{i+1}^{\otimes l} \cdot \partial_t^l \sigma_{i+2}^m(F)(\bdd{a}_{i})
	\end{multline*}
	for $l \in \{0, 1, \ldots, m\}$. Thus, we obtain at most $m$ additional continuity conditions at the vertices as stated in \cref{eq:sigmam-cont-combo}. In summary, $\sigma_i^m(F)$ satisfies the following conditions:
	\begin{enumerate}
		
		\item $W^{s-m-\frac{1}{q}, q}$ regularity on each edge:
		\begin{align}
			\label{eq:sigmam-edge-reg}
			\sigma_i^m(F) \in W^{s-m-\frac{1}{q}, q}(\gamma_i), \qquad i \in \{1,2,3\}.
		\end{align}
		
		\item Continuity at vertices:
		\begin{subequations}
			\label{eq:sigmam-cont-combo}
			\begin{alignat}{2}
				\label{eq:sigmam-cont}
				\unitvec{t}_{i+2}^{\otimes l} \cdot \partial_t^l \sigma_{i+1}^m(F)(\bdd{a}_{i})   =  \unitvec{t}_{i+1}^{\otimes l} \cdot \partial_t^l \sigma_{i+2}^m(F)(\bdd{a}_{i}) & \qquad & &\text{if } (s-m-l)q > 2, \\
				\label{eq:sigmam-cont-int}
				\mathcal{I}_{i}^{q}(\unitvec{t}_{i+2}^{\otimes l} \cdot \partial_t^l \sigma_{i+1}^m(F), \unitvec{t}_{i+1}^{\otimes l} \cdot \partial_t^l \sigma_{i+2}^m(F)  ) < \infty& \qquad & & \text{if } (s-m-l)q = 2,
			\end{alignat}
		\end{subequations}
		for  $l \in \{0, 1, \ldots, m\}$ and $i \in \{1,2,3\}$.
	\end{enumerate}

	Motivated by the above conditions, we define the space $X_m^{s,q}(\partial T)$, for $(s, q) \in \mathcal{A}_m$ as follows:
	\begin{multline}
		\label{eq:xsqm-def}
		X_m^{s,q}(\partial T) := \{ (f^0, f^1, \ldots, f^m) \in L^q(T)^{m+1} :  \text{$\sigma^k(f^0, f^1, \ldots, f^k)$ satisfies} \\ \text{ \cref{eq:sigmam-cont-combo,eq:sigmam-edge-reg} for $k \in \{0, 1, \ldots, m\}$} \},
	\end{multline}
	equipped with the norm
	\begin{multline*}
		\| (f^0, f^1, \ldots f^m) \|_{X_m^{s,q}, \partial T}^q := \sum_{i=1}^{3} \sum_{k=0}^{m}  \| f_i^k \|_{s-k-\frac{1}{q}, q, \gamma_i}^q  \\
		+ \sum_{i=1}^{3}  \begin{cases}
				\mathcal{I}_{i}^{q}(\unitvec{t}_{i+2}^{\otimes l} \cdot \partial_t^l \sigma_{i+1}^m(F), \unitvec{t}_{i+1}^{\otimes l} \cdot \partial_t^l \sigma_{i+2}^m(F)) & \text{if } (s-m-l)q = 2, \\
			0 & \text{otherwise}.
		\end{cases}
	\end{multline*}
	Note that with the above definition, $X_1^{s, q}(\partial T) = X^{s, q}(\partial T)$, where $X^{s, q}(\partial T)$ is defined in \cref{eq:trace-1-space-def}. The above discussion leads to the following trace estimate.
	\begin{lemma}
		For every $m \in \mathbb{N}_0$, $(s, q) \in \mathcal{A}_m$, and $u \in W^{s, q}(T)$, the traces satisfy \newline $(u|_{\partial T}, \partial_n u|_{\partial T}, \ldots, \partial_n^m u|_{\partial T}) \in X_m^{s,q}(\partial T)$ and
		\begin{align}
			\label{eq:trace-thm-gen}
			\|(u, \partial_n u, \ldots, \partial_n^m u)\|_{X_m^{s,q}, \partial T} \lesssim_{m, s, q} \|u\|_{s, q, T}.
		\end{align}
	\end{lemma}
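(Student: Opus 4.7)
The plan is to reduce every condition in the definition of $X_m^{s,q}(\partial T)$ to the standard trace and Sobolev embedding theory applied to the derivative tensor $D^k u \in W^{s-k, q}(T)$ for $0 \leq k \leq m$. Setting $F_k := (u|_{\partial T}, \partial_n u|_{\partial T}, \ldots, \partial_n^k u|_{\partial T})$, I would first establish by induction on $k$ the identity $\sigma^k(F_k) = D^k u$ on $\partial T$, exactly as was done in \cref{sec:trace-review} for $k = 1$: the base case $k = 0$ is \cref{eq:sigma0-def}, and the inductive step simply unfolds the recursive definition of $\sigma^k$, using that on $\gamma_l$ the tangential derivative $\partial_t \{D^{k-1} u|_{\gamma_l}\}$ contributes $\unitvec{t}_l \cdot D^k u$ (the new tangential slot) while $\partial_n^k u$ supplies the purely normal component. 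This identity reduces every claim in \cref{eq:sigmam-edge-reg,eq:sigmam-cont-combo} to a statement about traces of $D^k u$.

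Edge regularity \cref{eq:sigmam-edge-reg} is then immediate: since $u \in W^{s, q}(T)$ with $s \geq m+1$ gives $D^k u \in W^{s-k, q}(T)$ for each $0 \leq k \leq m$, the standard trace theorem (see \cref{eq:review:usual-trace-conditions}) yields $D^k u|_{\gamma_i} \in W^{s-k-\frac{1}{q}, q}(\gamma_i)$ with norm bounded by $\|u\|_{s, q, T}$.

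For the vertex conditions \cref{eq:sigmam-cont-combo}, I would follow the manipulation already carried out in the excerpt: on $\gamma_{i+1}$ the tangential direction is $\unitvec{t}_{i+1}$, so that $\partial_t^l \sigma_{i+1}^k(F_k) = \unitvec{t}_{i+1}^{\otimes l} \cdot D^{k+l} u$ on $\gamma_{i+1}$, and further contraction with $\unitvec{t}_{i+2}^{\otimes l}$, combined with the symmetry of $D^{k+l} u$, yields the pointwise equality \cref{eq:sigmam-cont} at $\bdd{a}_i$ whenever $D^{k+l} u$ is continuous there. The Sobolev embedding $W^{s-k-l, q}(T) \hookrightarrow C(\bar{T})$ applied to $D^{k+l} u$ covers exactly the regime $(s-k-l) q > 2$. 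For the integral condition \cref{eq:sigmam-cont-int} at $(s-k-l) q = 2$, I would apply the equivalent norm expression of $W^{s-k-l-\frac{1}{q}, q}(\partial T)$ near a vertex (\cref{eq:vertex-integral-def}) to the trace of $D^{k+l} u$, then contract the integrand with the constant tensors $\unitvec{t}_{i+2}^{\otimes l}$ and $\unitvec{t}_{i+1}^{\otimes l}$ to obtain the required bound via the finiteness of $\mathcal{I}_i^q(D^{k+l} u, D^{k+l} u)$, whose size is controlled by $\|D^{k+l} u\|_{s-k-l, q, T} \lesssim \|u\|_{s, q, T}$.

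The main technical annoyance will be a careful case split at the critical threshold $(s-k-l) q = 2$, which mixes the case $q = 2$ (where $s - k - l$ is an integer and the $\beta = \frac{1}{q}$ trace identification for $q = 2$ must be invoked) with the case $q > 2$ (where $s - k - l = \frac{2}{q} \in (0, 1)$ and the fractional trace theorem applies directly). Once the pointwise and integral conditions are verified for all admissible pairs $(k, l)$, summing the resulting estimates over $i \in \{1,2,3\}$, $k \in \{0, \ldots, m\}$, and $l \in \{0, \ldots, k\}$ produces the stability bound \cref{eq:trace-thm-gen}.
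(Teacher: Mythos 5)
Your proposal is correct and follows essentially the same route as the paper: the paper gives no separate proof of this lemma, deriving it instead from the discussion immediately preceding it, which establishes $\sigma^m(F) = D^m u$ on $\partial T$, applies the edge trace conditions \cref{eq:review:usual-trace-conditions} and the embedding \cref{eq:review:sobolev-embed-cont}, and invokes the symmetry of the derivative tensors for the vertex conditions \cref{eq:sigmam-cont-combo} --- exactly the steps you carry out. Your explicit induction on $k$ and the case split at $(s-k-l)q=2$ merely spell out what the paper leaves implicit.
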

	\noindent The remainder of this section is devoted to proving the following generalization of \cref{thm:tilde-l1-existence}.
	\begin{theorem}
		\label{thm:tilde-lm-existence}
		Let $m \in \mathbb{N}_0$. There exists a single linear operator
		\begin{align*}
			\tilde{\mathcal{L}}_m : \bigcup_{ (s, q) \in \mathcal{A}_m } X_m^{s,q}(\partial T) \to W^{m, 1}(T)
		\end{align*}
		satisfying the following properties. For all $(s, q) \in \mathcal{A}_m$ and  $F = (f^0, f^1, \ldots, f^m) \in X_m^{s,q}(\partial T)$, $\tilde{\mathcal{L}}_m(F) \in W^{s,q}(T)$ and there holds
		\begin{align}
			\label{eq:tilde-lm-prop}
			\partial_n^k \tilde{\mathcal{L}}_m (F)|_{\partial T} = f^k, \quad k \in \{0, 1, \ldots, m\}, \quad \text{and} \quad \|\tilde{\mathcal{L}}_m (F)\|_{s, q, T} \lesssim_{m, s, q} \|F\|_{X_m^{s,q}, \partial T}.
		\end{align}
		Moreover, if for some $p \in \mathbb{N}_0$ and all $i \in \{1,2,3\}$, there holds
		\begin{subequations}
			\label{eq:gen-poly-compat-conditions}
			\begin{alignat}{2}
				f_i^k &\in \mathcal{P}_{p-k}(\gamma_i) \qquad & &k \in \{0, 1, \ldots, m\}, \\
				\sigma_{i+1}^k(f^0, f^1, \ldots, f^k)(\bdd{a}_i) &= \sigma_{i+2}^k(f^0, f^1, \ldots, f^k)(\bdd{a}_i) \qquad & &k \in \{0, 1, \ldots, m\}, \\
				\unitvec{t}_{i+2}^{\otimes l} \cdot \partial_t^l \sigma_{i+1}^m(F)(\bdd{a}_{i})   &=  \unitvec{t}_{i+1}^{\otimes l} \cdot \partial_t^l \sigma_{i+2}^m(F)(\bdd{a}_{i}) \qquad & &l \in \{1, 2, \ldots, m\},
			\end{alignat}
		\end{subequations}
		 then $\tilde{\mathcal{L}}_m(F) \in \mathcal{P}_{p}(T)$ and \cref{eq:tilde-lm-prop} holds for all $(s, q) \in \mathcal{A}_m$.
	\end{theorem}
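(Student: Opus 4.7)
The plan is to construct $\tilde{\mathcal{L}}_m$ in three stages that mirror the proof of \cref{thm:tilde-l1-existence}, one stage per edge, using successive corrections by the three families of single-edge operators. Fix $b \in C_c^{\infty}(\unitint)$ with $\int_\unitint b = 1$ and let $F = (f^0, \ldots, f^m) \in X_m^{s,q}(\partial T)$. In the first stage I would lift all $m+1$ normal-derivative traces on $\gamma_1$ by the recursion
\begin{align*}
U_0 := \mathcal{E}_0^{[1]}[b](f^0_1), \qquad U_k := U_{k-1} + \mathcal{E}_k^{[1]}[b]\bigl(f^k_1 - \partial_n^k U_{k-1}|_{\gamma_1}\bigr), \quad 1 \le k \le m,
\end{align*}
setting $\tilde{\mathcal{L}}_m^{[1]}(F) := U_m$. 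Property \cref{eq:em3-derivative-interp} yields inductively $\partial_n^j U_k|_{\gamma_1} = f^j_1$ for $0 \le j \le k$, while \cref{eq:em3-stability} combined with the trace estimate \cref{eq:trace-thm-gen} gives $\|\tilde{\mathcal{L}}_m^{[1]}(F)\|_{s,q,T} \lesssim_{m,s,q} \|F\|_{X_m^{s,q},\partial T}$ and polynomial preservation as in \cref{lem:single-edge-combined-extension}.

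For the second and third stages I would analogously correct the traces on $\gamma_2$ using $\mathcal{M}_{k, m+1}^{[2]}[b]$ and then on $\gamma_3$ using $\mathcal{S}_{k, m+1}^{[3]}[b]$, for $k=0,1,\ldots,m$ in turn. The choice $r=m+1$ in the second argument of each operator guarantees, via \cref{lem:mmr3-derivative-interp,lem:smr3-derivative-interp}, that every correction vanishes to order $m$ in the normal direction on the already-processed edges, so that the previously matched traces are preserved. To apply those lemmas one needs generalizations of \cref{lem:trace-diff-left-space,lem:trace-diff-zz-space}: if $G \in X_m^{s,q}(\partial T)$ has $f^j_j = 0$ for all $j \in \{0,\ldots,m\}$ and $f^l_{j+1} = 0$ for $l < k$, then $f^k_{j+1} \in W_L^{\min\{s-k, m+1\} - \frac{1}{q}, q}(\gamma_{j+1})$ with norm controlled by $\|G\|_{X_m^{s,q},\partial T}$, and analogously for the $W_{00}$ space on $\gamma_3$ once $\gamma_1$ and $\gamma_2$ carry zero data. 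Given these two lemmas, the construction proceeds exactly as in \cref{lem:two-edge-combined-extension} and the proof of \cref{thm:tilde-l1-existence}, with polynomial preservation following from the fact that the weighted-space hypotheses force the required tangential vanishing of the residuals at the relevant vertices, and the $W^{s,q}(T)$ bound closed by the trace theorem.

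The main obstacle is the combinatorial proof of the generalized \cref{lem:trace-diff-left-space}. The idea is to decompose $\unitvec{n}_{j+1} = c_0 \unitvec{t}_j + c_1 \unitvec{t}_{j+1}$ in the basis of edge tangents, expand the identity $f^k_{j+1} = \unitvec{n}_{j+1}^{\otimes k} \cdot \sigma_{j+1}^k(G)$ by the multinomial theorem using symmetry of $\sigma^k$, and use the hypothesis $f^l_{j+1} = 0$ for $l < k$ (equivalently $\partial_t^i f^l_{j+1} = 0$ for all $i$) to kill every term containing a factor $\unitvec{t}_{j+1}^{\otimes i}$ with $i \ge 1$. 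This collapses $f^k_{j+1}$ to a scalar multiple of the pure contraction $\unitvec{t}_j^{\otimes k} \cdot \sigma_{j+1}^k(G)$. Differentiating tangentially $l$ times on $\gamma_{j+1}$ and subtracting the identically vanishing quantity $\unitvec{t}_{j+1}^{\otimes l} \cdot \partial_t^l \sigma_j^k(G)$ (which is zero on $\gamma_j$ by hypothesis (i)), the value at $\bdd{a}_{j+2}$ is recognized as a multiple of the difference
\begin{align*}
\unitvec{t}_j^{\otimes k} \cdot \partial_t^l \sigma_{j+1}^{k}(G)(\bdd{a}_{j+2}) - \unitvec{t}_{j+1}^{\otimes l} \cdot (\unitvec{t}_j^{\otimes k-\text{contractions}}) \cdot \sigma_j^{k}(G)(\bdd{a}_{j+2}),
\end{align*}
which vanishes by the mixed-derivative compatibility \cref{eq:sigmam-cont-combo} applied to the appropriate $\sigma^{k+l}$ at the vertex. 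The borderline case $(s-k-l)q = 2$ is handled exactly as in the $m=1$ argument by bounding weighted $L^q$ norms by the vertex integrals $\mathcal{I}_i^q$ in the definition of the $X_m^{s,q}$ norm.

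Once this reduction is pinned down, the $W_{00}$ version required for the third stage is obtained by applying the $W_L$ lemma twice (once on $\gamma_2 \cup \gamma_3$ and once on $\gamma_3 \cup \gamma_1$, reversing the roles of the two vertices of $\gamma_3$) and using the elementary inequality $d_1 + d_2 \lesssim d_1 d_2$ on $\gamma_3$, exactly as in the proof of \cref{lem:trace-diff-zz-space}. Assembling the pieces following the template of \cref{thm:tilde-l1-existence} then yields both the stability statement and the polynomial preservation statement of \cref{thm:tilde-lm-existence}.
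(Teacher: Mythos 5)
Your proposal follows essentially the same route as the paper: the same edge-by-edge recursive construction with $\mathcal{E}_k^{[1]}$, $\mathcal{M}_{k,\cdot}^{[2]}$, and $\mathcal{S}_{k,\cdot}^{[3]}$, the same generalized trace-difference lemmas reducing the weighted-space membership of the residuals to the vertex compatibility conditions \cref{eq:sigmam-cont-combo}, and the same two-sided application of the $W_L$ lemma plus $d_1 + d_2 \lesssim d_1 d_2$ to obtain the $W_{00}$ version. (Your choice $r = m+1$ in the Mu\~{n}oz-Sola operators, which is what guarantees preservation of normal derivatives up to order $m$ on the already-processed edges and is consistent with the $r=2$ used in the $m=1$ construction, is the correct one; the paper's displayed $r = m$ appears to be a slip.)
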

	
	\subsection{Two technical lemmas}
	
	We first generalize \cref{lem:trace-diff-left-space}.
	\begin{lemma}
		\label{lem:trace-diff-left-space-gen}
		Let $m \in \mathbb{N}_0$, $(s, q) \in \mathcal{A}_m$, and $F = (f^0, f^1, \ldots, f^m) \in X_m^{s, q}(\partial T)$. Suppose that for some $l \in \{0, 1, \ldots, m\}$, there holds
		\begin{enumerate}
			\item[(i)] $f_{1}^0 = f_{1}^1 = \cdots = f_1^m = 0$, and
			
			\item[(ii)] $f_{2}^0 = f_{2}^1 = \cdots = f_{2}^{l-1} = 0$ if $l \geq 1$.
		\end{enumerate}
		Then, $f_{2}^l \in W_L^{\beta - \frac{1}{q}, q}(\gamma_{2})$ with $\beta = \min\{ s-l, m \}$, and there holds
		\begin{align}
			\label{eq:trace-diff-left-space-bound-gen}
			\leftnorm{f_{2}^{l}}{\beta-\frac{1}{q}, q, \gamma_{2}} &\lesssim_{\beta, q}   \| F \|_{X_m^{s, q}, \partial T}.
		\end{align}
		If, in addition, $F$ satisfies \cref{eq:gen-poly-compat-conditions}, then $\partial_t^{j} f^l_{2}(\bdd{a}_{3}) = 0$, $j \in \{0, 1, \ldots, m\}$.
	\end{lemma}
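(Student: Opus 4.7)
The plan is to mimic the proof of \cref{lem:trace-diff-left-space} by extracting scalar vanishing conditions from the tensor-valued continuity relations \cref{eq:sigmam-cont-combo} and exploiting the simplifications afforded by conditions (i) and (ii). The key observation is the triangular identity
\[
\sigma_2^k(F)|_{\gamma_2} = \sum_{i=0}^{k-l} \partial_t^{k-l-i} f_2^{l+i} \, \unitvec{n}_2^{\otimes(l+i)} \otimes \unitvec{t}_2^{\otimes(k-l-i)}, \qquad l \leq k \leq m,
\]
which follows by induction on $k$ from (ii) together with the recursive definition of $\sigma^k$. In particular $\sigma_2^l(F)|_{\gamma_2} = f_2^l \, \unitvec{n}_2^{\otimes l}$, so the edge regularity $f_2^l \in W^{s-l-\frac{1}{q}, q}(\gamma_2) \subseteq W^{\beta-\frac{1}{q}, q}(\gamma_2)$ and its norm bound follow immediately from \cref{eq:sigmam-edge-reg}.

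For the vanishing of tangential derivatives at $\bdd{a}_3$, condition (i) makes the left-hand side of every continuity relation in \cref{eq:sigmam-cont-combo} at $\bdd{a}_3$ vanish, so
\[
\unitvec{t}_1^{\otimes l'} \cdot \partial_t^{l'} \sigma_2^k(F)(\bdd{a}_3) = 0
\]
pointwise whenever $(s-k-l')q > 2$, with an analogous integral bound when equality holds. For each integer $j$ with $(\beta-j)q > 2$, I would take $k = \min\{l+j, m\}$ and $l' = l+j-k$; since $l \leq m$ and $j \leq m-1$, this keeps $0 \leq l' \leq k \leq m$. Substituting the triangular identity and contracting with $\unitvec{n}_2^{\otimes(l-l')} \otimes \unitvec{t}_2^{\otimes(k-l)}$, the factor $\unitvec{n}_2 \cdot \unitvec{t}_2 = 0$ annihilates every $i \geq 1$ cross-term, and the equation reduces to
\[
(\unitvec{t}_1 \cdot \unitvec{n}_2)^{l'} \, \partial_t^j f_2^l(\bdd{a}_3) = 0.
\]
Since $\unitvec{t}_1 \cdot \unitvec{n}_2 \neq 0$ by the linear independence of $\unitvec{t}_1$ and $\unitvec{t}_2$, this yields $\partial_t^j f_2^l(\bdd{a}_3) = 0$; the threshold $j < \beta - \tfrac{2}{q}$ matches exactly the vanishing condition in \cref{eq:left-space-conditions}.

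The borderline integral case $(s-l-j)q = 2$ is handled by applying the same tensor-extraction argument to \cref{eq:sigmam-cont-int}: after parametrizing $\gamma_1$ and $\gamma_2$ by arc length from $\bdd{a}_3$ and using (i) to annihilate the $\gamma_1$-contribution, the appropriate $\mathcal{I}_3^q$ integral bounds $\|d_3^{-\frac{1}{q}} \partial_t^j f_2^l\|_{q,\gamma_2}^q$ by a constant multiple of $\|F\|_{X_m^{s,q},\partial T}^q$. Combined with the edge regularity this yields \cref{eq:trace-diff-left-space-bound-gen}. The polynomial claim is immediate: polynomial smoothness renders the regularity thresholds vacuous, and the algebraic compatibility conditions \cref{eq:gen-poly-compat-conditions} supply continuity at $\bdd{a}_3$ for every $l' \leq m$, so the tensor-extraction argument applies for every $j \in \{0, 1, \ldots, m\}$. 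The main obstacle will be the index bookkeeping in the tensor extraction---verifying that the choice $(k, l')$ lands in the admissible range for each $j$ and that the slot-by-slot contraction cleanly isolates $\partial_t^j f_2^l(\bdd{a}_3)$ with a nonzero coefficient, especially in the regime $l + j > m$, where $\sigma^m$ (rather than $\sigma^{l+j}$) must be invoked and the continuity input carries a nontrivial power of $\unitvec{t}_1 \cdot \unitvec{n}_2$.
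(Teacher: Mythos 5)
Your proposal is correct and follows essentially the same route as the paper: both arguments extract the scalar conditions $\partial_t^j f_2^l(\bdd{a}_3)=0$ (and the borderline $\mathcal{I}_3^q$ bounds) from the tensor continuity relations \cref{eq:sigmam-cont-combo} applied to $\sigma_2^k$ with $k+l'=l+j$, using (i) to kill the $\gamma_1$ contribution and (ii) to isolate the $f_2^l$ term, and both obtain the polynomial claim from membership in $W_L^{m-\frac{1}{q},q}(\gamma_2)$ for all $q$. Your ``triangular identity'' plus the orthogonal contraction with $\unitvec{n}_2^{\otimes(l-l')}\otimes\unitvec{t}_2^{\otimes(k-l)}$ is just a cleaner bookkeeping of the paper's expansion $\unitvec{n}_2=c_1\unitvec{t}_1+c_2\unitvec{t}_2$ combined with \cref{eq:proof:sigma-switch-tangent-deriv}, and the index checks ($0\le l'\le l\le k\le m$, $(s-k-l')q=(s-l-j)q$) all go through.
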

	\begin{proof}
		Let $m \in \mathbb{N}_0$, $l \in \{0, 1, \ldots, m\}$, $(s, q) \in \mathcal{A}_m$, and $F = (f^0, f^1, \ldots, f^m) \in X_m^{s, q}(\partial T)$ be as in the statement of the lemma. By definition, there holds
		\begin{align}
			\label{eq:proof:trace-comp-triangle-ineq-gen}
			\|f_{j+1}^{l} \|_{\beta-\frac{1}{q}, q, \gamma_{j+1}} &\lesssim_{m, \beta, q}   \| F \|_{X_m^{s, q}, \partial T},
		\end{align}
		and so it remains to verify the conditions \cref{eq:left-space-conditions} and bound the weighted $L^q$ norm term in \cref{eq:left-norm-def-gamma} when $s - 2/q \in \mathbb{Z}$.

		Let $\phi_{1}(h) = \bdd{a}_{3} - h \unitvec{t}_{1}$, and $\phi_{2}(h) = \bdd{a}_{3} + h \unitvec{t}_{2}$ for $0 \leq h \leq 1$ be the same edge parametrizations as in the proof of \cref{lem:trace-diff-left-space}. Thanks to the identity
		\begin{align}
			\label{eq:proof:sigma-switch-tangent-deriv}
			\partial_t^r \sigma_2^l(F) = \unitvec{t}_2^{\otimes r} \cdot \sigma_2^{l+r}(F), \qquad r \in \{0, 1, \ldots, m-l\},
		\end{align}
		where $\sigma^{j}(F) = \sigma^j(f^0, f^1, \ldots, f^{j})$, we obtain the following for $k \in \{0, 1, \ldots, m-l-1\}$:
		\begin{align*}
			\partial_h^k \{ f_2^l \circ \phi_2 \} &= \partial_h^k \left\{  \unitvec{n}_2^{\otimes l} \cdot \sigma_2^l (F) \circ \phi_2 \right\} \\
			&= \unitvec{t}_2^{\otimes k} \cdot \unitvec{n}_2^{\otimes l} \cdot \sigma_2^{l+k}(F) \circ \phi_2 \\
			&= \unitvec{t}_2^{\otimes l+k} \cdot \unitvec{n}_2^{\otimes l} \cdot \{ \sigma_2^{l+k}(F) \circ \phi_2 - \sigma_1^{l+k}(F) \circ \phi_1 \},
		\end{align*}
 		where we used (i) in the final step. \Cref{eq:sigmam-cont} then gives $\partial_t^k f^l_2(\bdd{a}_3) = 0$.
 		
		For $k \in \{m-l, m-l+1, \ldots, m\}$ and $k < s - l + 1/q$, there holds
		\begin{align*}
			\partial_t^k f_2^l  &= \unitvec{n}_2^{\otimes l} \cdot \unitvec{t}_2^{\otimes m-l} \cdot \partial_t^{k-m+l} \sigma_2^{m}(F). 
		\end{align*}
		Since $\unitvec{t}_1$ and $\unitvec{t}_{2}$ are linearly independent, there exist constants $c_1, c_2 \in \mathbb{R}$ such that  $\unitvec{n}_{2} = c_1 \unitvec{t}_1 + c_2 \unitvec{t}_{2}$, and so \cref{eq:proof:sigma-switch-tangent-deriv} gives
		\begin{align*}
			\partial_t^k f_2^l  &= \sum_{i=0}^{l} \tilde{c}_i \unitvec{t}_1^{\otimes i} \cdot \unitvec{t}_2^{\otimes m-i} \cdot \partial_t^{k-m+l} \sigma_2^{m}(F) 
				= \sum_{i=0}^{l} \tilde{c}_i \unitvec{t}_1^{\otimes i}  \cdot \partial_t^{k+l-i} \sigma_2^{i}(F) 
		\end{align*}
		for suitable constants $\{\tilde{c}_i\}$. Thanks to (ii), $\partial_h^{k+l-i} \{\sigma_2^i(F) \circ \phi_2\} = 0$ for $i \in \{0, 1, \ldots, l-1\}$, and so
		\begin{align*}
			\partial_h^k \{ f_2^l \circ \phi_2 \}
			&= \tilde{c}_l \unitvec{t}_1^{\otimes l} \cdot \unitvec{t}_2^{\otimes m-l}  \cdot \partial_h^{k-m+l} \{\sigma_2^{m}(F) \circ \phi_2\} \\
			&= \tilde{c}_l \unitvec{t}_2^{\otimes m-l} \cdot \unitvec{t}_1^{\otimes m-k} \cdot \{ \unitvec{t}_1^{\otimes k-m+l}  \cdot \partial_h^{k-m+l} \{\sigma_2^{m}(F) \circ \phi_2\} \\
				&\qquad \qquad \qquad \qquad \qquad - \unitvec{t}_2^{\otimes k-m+l}  \cdot \partial_h^{k-m+l} \{\sigma_1^{m}(F) \circ \phi_1\} \}, 
		\end{align*}
		where we used (i). Using \cref{eq:sigmam-cont} and combining with the case $k \leq m-l-1$ then gives 
		\begin{align*}
			\partial_t^k f_2^l(\bdd{a}_3) &= 0 \qquad \text{if } (s-k-l)q > 2  \text{ and }  0 \leq k \leq m. 
		\end{align*}
		When $(s-k-l)q = 2$, we have the bound 
		\begin{align*}
			\|d_3^{-\frac{1}{q}} \partial_t^k f^l\|_{q, \gamma_2}^q
			\lesssim_q \|\partial_t^k f^l\|_{q, \gamma_{2}}^q + \mathcal{I}_{3}^q( \unitvec{t}_2^{\otimes j} \cdot \partial_t^{j} \sigma_{1}^m(F), \unitvec{t}_1^{\otimes j} \cdot \partial_t^{j} \sigma_{2}^m(F) ),
		\end{align*}
		where $j = k-m+l$.
		Collecting results then gives $f_{2}^l \in W_L^{\beta - \frac{1}{q}, q}(\gamma_{2})$ and \cref{eq:trace-diff-left-space-bound-gen}.
		
		Now suppose that $F$ satisfies \cref{eq:gen-poly-compat-conditions}. Then, we have already shown that $f^l_{2} \in W_L^{m - \frac{1}{q},q}(\gamma_{2})$ for all $1 < q < \infty$, and so $\partial_t^j f^l_{2} (\bdd{a}_{3}) = 0$, $j \in \{0, 1, \ldots, m\}$.	
	\end{proof}
	We also have the following generalization of \cref{lem:trace-diff-zz-space}.
		\begin{lemma}
		\label{lem:trace-diff-zz-space-gen}
		Let $(s, q) \in \mathcal{A}_m$, and $F = (f^0, f^1, \ldots, f^m) \in X_m^{s, q}(\partial T)$. Suppose that for some $l \in \{0, 1, \ldots, m\}$ there holds
		\begin{enumerate}
			\item[(i)] $f_{i}^{0} = f_i^1 = \cdots = f_i^m =0$ for $i \in \{1, 2\}$, and
			
			\item[(ii)] $f_{3}^{0} = f_3^1 = \cdots = f^{l-1}_3 = 0$ if $l \geq 1$.
		\end{enumerate}
		Then, $f_{3}^{n} \in W_{00}^{\beta - \frac{1}{q}, q}(\gamma_{3})$ with $\beta = \min\{ s-l, m \}$, and there holds
		\begin{align}
			\label{eq:trace-diff-zz-space-bound-gen}
			\zznorm{f_{3}^{l}}{\beta-\frac{1}{q}, q, \gamma_{3}} &\lesssim_{\beta, q}   \| F \|_{X_m^{s, q},\partial T}.
		\end{align}
		If, in addition, $F$ satisfies \cref{eq:gen-poly-compat-conditions}, then then $\partial_t^j f^l_{3}|_{\partial \gamma_{3}} = 0$, $j \in \{0, 1, \ldots, m\}$.
	\end{lemma}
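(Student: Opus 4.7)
The strategy mirrors the proof of the $m=1$ case (\cref{lem:trace-diff-zz-space}): $W_{00}$ decay at both endpoints of $\gamma_3$ is recovered by establishing $W_L$-type decay at each endpoint separately and then combining the two one-sided weighted estimates.

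First, I would apply \cref{lem:trace-diff-left-space-gen} directly with the indexing chosen so that $\gamma_3$ plays the role of the target edge. Hypotheses (i) and (ii) of the present lemma supply exactly the input needed by that lemma, yielding membership $f_3^l \in W_L^{\beta - \frac{1}{q}, q}(\gamma_3)$ encoding decay of $f_3^l$ at the endpoint $\bdd{a}_2$ of $\gamma_3$, together with the estimate
\begin{align*}
\leftnorm{f_3^l}{\beta - \tfrac{1}{q}, q, \gamma_3} \lesssim_{\beta, q} \|F\|_{X_m^{s,q}, \partial T}.
\end{align*}

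Next, I would rerun the argument of \cref{lem:trace-diff-left-space-gen} verbatim with the roles of $\gamma_1$ and $\gamma_3$ interchanged in the cyclic labeling. Under this relabeling, hypothesis (i) still supplies the vanishing traces on the ``previous'' edge, hypothesis (ii) supplies the lower-order vanishing on $\gamma_3$ itself, and the same two-case analysis used inside the proof of \cref{lem:trace-diff-left-space-gen} (handling $k \in \{0, \ldots, m-l-1\}$ using identity \cref{eq:proof:sigma-switch-tangent-deriv}, and $k \in \{m-l, \ldots, m\}$ via the linear-independence decomposition $\unitvec{n}_3 = c_1 \unitvec{t}_1 + c_2 \unitvec{t}_3$) produces the analogous pointwise vanishing $\partial_t^k f_3^l(\bdd{a}_1) = 0$ for $0 \leq k < \min\{s - l - \frac{1}{q}, m+1\}$, together with a companion weighted bound $\|d_1^{-1/q} \partial_t^k f_3^l\|_{q, \gamma_3} \lesssim_{s,q} \|F\|_{X_m^{s,q}, \partial T}$ in the borderline case $(s-l-k)q = 2$.

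Combining the two one-sided estimates then delivers $f_3^l \in W_{00}^{\beta - \frac{1}{q}, q}(\gamma_3)$ and the bound \cref{eq:trace-diff-zz-space-bound-gen}: the two weights $d_1^{-1/q}$ and $d_2^{-1/q}$ can be absorbed into the symmetric weight $(d_1 d_2)^{-1/q}$ via the same pointwise comparison between distance functions on $\gamma_3$ that is used in the proof of \cref{lem:trace-diff-zz-space}. Finally, under the polynomial compatibility conditions \cref{eq:gen-poly-compat-conditions}, the argument above applies for every $1 < q < \infty$ and hence yields $f_3^l \in W_{00}^{m - \frac{1}{q}, q}(\gamma_3)$ for all such $q$; since $f_3^l$ is polynomial, this forces the pointwise vanishing $\partial_t^j f_3^l|_{\partial \gamma_3} = 0$ for all $j \in \{0, 1, \ldots, m\}$. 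The main obstacle I anticipate is purely bookkeeping: when swapping the roles of $\gamma_1$ and $\gamma_3$, the tangent and normal vectors must be carefully tracked through the linear-independence expansion, and the correct vertex conditions in the definition of $X_m^{s,q}(\partial T)$ must be invoked; no genuinely new analytic ingredient beyond \cref{lem:trace-diff-left-space-gen} is required.
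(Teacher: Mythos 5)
Your proposal is correct and follows essentially the same route as the paper: two applications of the one-sided argument of \cref{lem:trace-diff-left-space-gen} (one per endpoint of $\gamma_3$), combined via the pointwise comparison of $d_1$, $d_2$ and $(d_1 d_2)$ on $\gamma_3$, with the polynomial case deduced from membership in $W_{00}^{m-\frac{1}{q},q}(\gamma_3)$ for every $1<q<\infty$. The only quibble is a harmless labeling swap: by the convention \cref{eq:left-space-conditions}, $W_L^{\cdot}(\gamma_3)$ encodes decay at $\bdd{a}_1$ (the vertex shared with $\gamma_2$), so the first application handles $\bdd{a}_1$ and the relabeled second application handles $\bdd{a}_2$, the reverse of what you wrote.
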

	\begin{proof}
		Let $m \in \mathbb{N}_0$, $l \in \{0, 1, \ldots, m\}$, $(s, q) \in \mathcal{A}_m$, and $F = (f^0, f^1, \ldots, f^m) \in X_m^{s, q}(\partial T)$ be as in the statement of the lemma. Applying the same arguments as in the proof of \cref{lem:trace-diff-left-space-gen}, replacing $\gamma_1$ and $\gamma_2$ with $\gamma_2$ and $\gamma_3$ gives $f_{3}^{l} \in W_{L}^{\beta - \frac{1}{q}, q}(\gamma_{3})$ with 
		\begin{align}
			\label{eq:proof:gamma3-left-norm-bound-gen}
			\leftnorm{f_{3}^{l}}{\beta-\frac{1}{q}, q, \gamma_{3}} &\lesssim_{s, q}   \| F \|_{X_m^{s, q}, \partial T}.
		\end{align}
		Again applying the same arguments as in the proof of \cref{lem:trace-diff-left-space-gen} but reversing the roles of $\gamma_1$ and $\gamma_{2}$ and then replacing  $\gamma_1$ and $\gamma_2$ with $\gamma_2$ and $\gamma_3$ gives
		\begin{align*}
			\partial_t^k f_{3}^{l} (\bdd{a}_{2}) = 0 \qquad \text{for } 0 \leq k <  \min\left\{ s - k - \frac{2}{q}, m \right\}, 
		\end{align*}
		and if  $(s-k-l)q = 2$ for some $k \in \{0, 1, \ldots, m\}$, then
		\begin{align}
			\label{eq:proof:gamma3-right-norm-bound-gen}
			\| d_2^{-\frac{1}{q}} \partial_t^k f^l\|_{q, \gamma_3} \lesssim_{\beta, q} \|F\|_{X_m^{\beta, q}, \partial T}.
		\end{align}
		The inclusion $f_{3}^{l} \in W_{00}^{\beta - \frac{1}{q}, q}(\gamma_{3})$ then follows from \cref{eq:zz-space-conditions} on noting that $d_1 + d_2 \lesssim d_1 d_2$, which, in conjunction with \cref{eq:proof:gamma3-left-norm-bound-gen}, \cref{eq:proof:gamma3-right-norm-bound-gen}, and the triangle inequality, gives \cref{eq:trace-diff-zz-space-bound-gen}.
		
		Now suppose that $F$ satisfies \cref{eq:gen-poly-compat-conditions}. Then, we have already shown that $f^l_{3} \in W_{00}^{m - \frac{1}{q},q}(\gamma_3)$ for all $1 < q < \infty$, and so $\partial_t^j f^l_{3} (\bdd{a})|_{\gamma_3} = 0$, $j \in \{0, 1, \ldots, m\}$.
	\end{proof}

	\subsection{Construction of the lifting operator}
	\label{sec:gen-lifting-constr}
	
	We now extend the construction in \cref{sec:constructing-lifting}. Let $m \in \mathbb{N}_0$ be given and $b \in C^{\infty}_c(\unitint)$ with $\int_{\unitint} b(t) \ dt = 1$. For $F = (f^0, f^1, \ldots, f^m) \in L^q(\partial T)^{m+1}$, we formally define the following operators:
	\begin{subequations}
		\begin{alignat}{2}
			\mathcal{K}_{0}^{[1]}(F) &:= \mathcal{E}_0^{[1]}[b](f_1^0), \qquad & & \\
			\mathcal{K}_{i}^{[1]}(F) &:= \mathcal{K}_{i-1}^{[1]}(F) +  \mathcal{E}_i^{[1]}[b]\left( f_1^i - \partial_n^i \mathcal{K}_{ i-1}^{[1]}(F)|_{\gamma_1} \right), \qquad & & i \in \{1,2,\ldots,m\}, \\
	 		\mathcal{K}_{0}^{[2]}(F) &:= \mathcal{K}_{m}^{[1]}(F) + \mathcal{M}_{0, m}^{[2]}[b](f_2^0 - \mathcal{K}_{m}^{[1]}(F)|_{\gamma_2} ), \qquad & & \\
			\mathcal{K}_{i}^{[2]}(F) &:= \mathcal{K}_{i-1}^{[2]}(F) + \mathcal{M}_{i, m}^{[2]}[b](f_2^i - \partial_n^i \mathcal{K}_{i-1}^{[2]}(F)|_{\gamma_2} ), \qquad & & i \in \{1,2,\ldots,m\}, \\
			\mathcal{K}_{0}^{[3]}(F) &:= \mathcal{K}_{m}^{[2]}(F) + \mathcal{S}_{0, m}^{[3]}[b](f_3^0 - \mathcal{K}_{m}^{[2]}(F)|_{\gamma_3} ), \qquad & & \\
			\mathcal{K}_{i}^{[3]}(F) &:= \mathcal{K}_{i-1}^{[3]}(F) + \mathcal{S}_{i, m}^{[3]}[b](f_3^i - \partial_n^i \mathcal{K}_{i-1}^{[3]}(F)|_{\gamma_3} ), \qquad & & i \in \{1,2,\ldots,m\}, \\
			\label{eq:tilde-lm-def}
			\tilde{\mathcal{L}}_m(F) &:= \mathcal{K}_{m}^{[3]}(F). \qquad & & 
		\end{alignat}
	\end{subequations}
	We now prove \cref{thm:tilde-lm-existence}.
	
	\begin{proof}[Proof of \cref{thm:tilde-lm-existence}]
		Let $m \in \mathbb{N}_0$, $(s, q) \in \mathcal{A}_m$, and $F = (f^0, f^1, \ldots, f^m) \in X_m^{s, q}(\partial T)$. $\mathcal{K}_{0}^{[1]}$ is well-defined by \cref{lem:em3-derivative-interp-and-stability}, and arguing inductively shows that $\mathcal{K}_{i}^{[1]}$ is well-defined for $i \in \{0, 1, \ldots, m\}$. Repeatedly applying \cref{eq:em3-stability}, the triangle inequality, and the trace estimate \cref{eq:trace-thm-gen} gives
		\begin{align*}
			\|\mathcal{K}_{m}^{[1]}(F)\|_{s, q, T} &\lesssim_{m, s, q} \| \mathcal{K}_{m-1}^{[1]}(F) \|_{s, q, T} + \| f_1^m \|_{s-m-\frac{1}{q}, q, \partial T} \\
				&\lesssim_{m, s, q} \| \mathcal{K}_{m-2}^{[1]}(F) \|_{s, q, T} + \sum_{i=m-1}^{m} \| f_1^i \|_{s-i-\frac{1}{q}, q, \partial T} \\
				\cdots &\lesssim_{m, s ,q} \| \mathcal{K}_{0}^{[1]} \|_{s, q, T} + \sum_{i=1}^{m} \| f_1^i \|_{s-i-\frac{1}{q}, q, \partial T} \\
				&\lesssim_{m, s, q} \|F\|_{X^{s, q}_m, \partial T}.
		\end{align*}
		Moreover, \cref{eq:em3-derivative-interp} shows that $\partial_n^k \mathcal{K}_{m}^{[1]}(F)|_{\gamma_1} = f_1^k$, for $k \in \{0, 1, \ldots, m\}$.
		
		We now turn to $\mathcal{K}_{0}^{[2]}$. Applying \cref{lem:trace-diff-left-space-gen} gives $f_2^0 - \mathcal{K}_{m}^{[1]}(F)|_{\gamma_2} \in W^{s-\frac{1}{q}, q}(\gamma_2) \cap W_L^{\min\{s, m\} - \frac{1}{q}, q}(\gamma_2)$. Thus, $\mathcal{K}_{0}^{[2]}$ is well-defined by \cref{lem:mmr3-derivative-interp} with $\mathcal{K}_{0}^{[2]}(F)|_{\gamma_2} = f^0_2$, $\partial_n^k \mathcal{K}_{0}^{[2]}(F)|_{\gamma_1} = f_0^k$, for $k \in \{0, 1, \ldots, m\}$, and
		\begin{align*}
			\|\mathcal{K}_{0}^{[2]}(F)\|_{s, q, T} &\lesssim_{m, s, q} \|\mathcal{K}_{m}^{[1]}(F)\|_{s, q, T} + \leftnorm{f_2^0 - \mathcal{K}_{m}^{[1]}(F)}{s-\frac{1}{q}, q, \gamma_2} 
				\lesssim_{m, s, q} \|F\|_{X^{s, q}_m, \partial T}
		\end{align*}
		by \cref{eq:mmr3-stability,eq:trace-thm-gen,eq:trace-diff-left-space-bound-gen}. Arguing inductively by applying \cref{lem:mmr3-derivative-interp} and \cref{lem:trace-diff-left-space-gen} repeatedly shows that $\mathcal{K}_{i}^{[2]}$, $i \in \{0, 1, \ldots, m\}$, is well-defined with
		\begin{align*}
		\partial_n^k \mathcal{K}_{m}^{[2]}(F)|_{\gamma_1 \cup \gamma_2} = f^k, \quad k \in \{0, 1, \ldots, m\}, \ \ \text{and} \ \	\|\mathcal{K}_{m}^{[2]}(F)\|_{s, q, T} \lesssim_{m, s, q} \|F\|_{X^{s, q}_m, \partial T}.
		\end{align*}
	
		Next, we turn to $\mathcal{K}_{0}^{[3]}$. Applying \cref{lem:trace-diff-zz-space-gen} gives $f_3^0 - \mathcal{K}_{m}^{[2]}(F)|_{\gamma_3} \in W^{s-\frac{1}{q}, q}(\gamma_3) \cap W_{00}^{\min\{s, m\} - \frac{1}{q}, q}(\gamma_3)$. Thus, $\mathcal{K}_{0}^{[3]}$ is well-defined by \cref{lem:smr3-derivative-interp} with $\mathcal{K}_{0}^{[3]}(F)|_{\gamma_3} = f^0_3$, $\partial_n^k \mathcal{K}_{0}^{[3]}(F)|_{\gamma_1 \cup \gamma_2} = f_0^k$, for $k \in \{0, 1, \ldots, m\}$, and
		\begin{align*}
			\|\mathcal{K}_{0}^{[3]}(F)\|_{s, q, T} &\lesssim_{m, s, q} \|\mathcal{K}_{m}^{[2]}(F)\|_{s, q, T} + \zznorm{f_3^0 - \mathcal{K}_{m}^{[2]}(F)}{s-\frac{1}{q}, q, \gamma_2} 
			\lesssim_{m, s, q} \|F\|_{X^{s, q}_m, \partial T}
		\end{align*}
		by \cref{eq:trace-thm-gen,eq:trace-diff-zz-space-bound-gen,eq:smr3-stability-high}. Arguing inductively and analogously as above with \cref{lem:smr3-derivative-interp} and \cref{lem:trace-diff-zz-space-gen} repeatedly shows that $\mathcal{K}_{i}^{[2]}$, $i \in \{0, 1, \ldots, m\}$, is well-defined with
		\begin{align*}
			\partial_n^k \mathcal{K}_{m}^{[3]}(F)|_{\partial T} = f^k, \quad k \in \{0, 1, \ldots, m\}, \ \ \text{and} \ \	\|\mathcal{K}_{m}^{[3]}(F)\|_{s, q, T} \lesssim_{m, s, q} \|F\|_{X^{s, q}_m, \partial T}.
		\end{align*}
		This completes the proof of \cref{eq:tilde-lm-prop}.
		
		Finally, we assume that $F$ satisfies \cref{eq:gen-poly-compat-conditions}. By definition \cref{eq:xsqm-def}, $F \in X^{s, q}_m(\partial T)$. \Cref{lem:em3-derivative-interp-and-stability} then gives that $\mathcal{K}_{m}^{[1]}(F) \in \mathcal{P}_p(T)$. Repeatedly applying \cref{lem:trace-diff-left-space-gen} and \cref{lem:mmr3-derivative-interp} and arguing analogously as in the proof of \cref{thm:tilde-l1-existence} show that $\mathcal{K}_{m}^{[2]}(F) \in \mathcal{P}_{p}(T)$. Similar arguments based on \cref{lem:trace-diff-zz-space-gen} and \cref{lem:smr3-derivative-interp} then show that $\mathcal{K}_{m}^{[3]}(F) \in \mathcal{P}_{p}(T)$, which completes the proof.
	\end{proof}
	
	\section{Summary and future work}
	
	We have constructed a right inverse of the trace operator $u \mapsto (u|_{\partial T}, \partial_n u|_{\partial T}, \ldots, \partial_n^m u|_{\partial T})$, $m \in \mathbb{N}_0$, that maps suitable piecewise polynomial data on $\partial T$ into polynomials of the same degree and is bounded from $X_m^{s, q}(\partial T)$ into $W^{s, q}(T)$ for all $(s, q) \in \mathcal{A}_m$. One open problem is whether the above construction is also stable from the appropriate Besov space into $W^{s, q}(T)$ when $s-1/q \in \mathbb{Z}$ and $q \neq 2$ or from the trace of $W^{s, q}(T)$ with $m + 1/q < s < m + 1$ into $W^{s, q}(T)$, which arises in the analysis of high order discretizations of fractional PDEs. Another open problem is how to generalize the above construction to three or more space dimensions.
	
	\appendix
	 
	\section{Auxiliary 1D results}
	\label{sec:aux-1d}	

	\begin{lemma}
		Define the operator $\mathcal{H}_L$ formally by the rule 
		\begin{align*}
			\mathcal{H}_L f(t) = t^{-1} \int_{0}^{t} f(s) \ ds = \int_{0}^{1} f(ts) \ ds, \qquad t \in \unitint.
		\end{align*}
		For any real numbers $s \geq 0$ and $1 < q < \infty$, $\mathcal{H}_L$ is a bounded map of $W^{s, q}(\unitint)$ into $W^{s, q}(\unitint)$ and of $W^{s}_L(\unitint)$ into $W^{s}_L(\unitint)$. In particular,
		\begin{alignat}{2}
			\label{eq:hardy-average-bound}
			\| \mathcal{H}_L f \|_{s, q, \unitint} &\lesssim_{s, q} \| f\|_{s, q, \unitint} \qquad & &\forall f \in W^{s, q}(\unitint), \\
			\label{eq:hardy-average-left-bound}
			\leftnorm{\mathcal{H}_L f}{s, q, \unitint} &\lesssim_{s, q} \leftnorm{f}{s, q, \unitint} \qquad & &\forall f \in W_L^{s, q}(\unitint).
		\end{alignat}  
	\end{lemma}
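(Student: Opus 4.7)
The plan is to exploit the dilation form $\mathcal{H}_L f(t) = \int_0^1 f(ts)\,ds$. Differentiating under the integral for smooth $f$ yields
\begin{equation*}
(\mathcal{H}_L f)^{(k)}(t) = \int_0^1 s^k f^{(k)}(ts)\,ds, \qquad k \in \mathbb{N}_0,
\end{equation*}
so every derivative of $\mathcal{H}_L f$ is itself an average of the corresponding derivative of $f$ against $s^k\,ds$. With this in hand, every piece of the norm can be controlled by a single template: Minkowski's integral inequality followed by the change of variable $u = ts$.

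For the unweighted estimate \cref{eq:hardy-average-bound} with integer $s = k$, Minkowski's integral inequality gives, after $u = ts$ in the inner integral,
\begin{equation*}
\|(\mathcal{H}_L f)^{(k)}\|_{q,I} \leq \int_0^1 s^k \Bigl( \int_0^1 |f^{(k)}(ts)|^q\,dt \Bigr)^{1/q} ds = \Bigl( \int_0^1 s^{k-1/q}\,ds \Bigr) \|f^{(k)}\|_{q,I},
\end{equation*}
and the outer integral converges because $q > 1$. For non-integer $s = k + \beta$ with $0 < \beta < 1$, I would apply the same Minkowski argument to the Slobodeckij double integral; the scaling $(u,v) = (xs, ys)$ extracts a factor $s^{\beta - 1/q}$, so that Minkowski yields $|(\mathcal{H}_L f)^{(k)}|_{\beta, q, I} \lesssim \bigl(\int_0^1 s^{k + \beta - 1/q}\,ds\bigr) |f^{(k)}|_{\beta, q, I}$. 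Summing the derivative contributions proves \cref{eq:hardy-average-bound} for smooth $f$, and density finishes the job.

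For \cref{eq:hardy-average-left-bound} I would then check that the structural conditions in \cref{eq:left-space-conditions} are preserved. Evaluating the derivative formula at $t = 0$ gives $(\mathcal{H}_L f)^{(j)}(0) = f^{(j)}(0)/(j+1)$, so the pointwise vanishing conditions at $t = 0$ transfer directly from $f$ to $\mathcal{H}_L f$. For the weighted $L^q$ condition when $\beta q = 1$, the Minkowski/substitution template works verbatim with the weight $t^{-1/q}$ inserted: the change $u = ts$ converts $t^{-1}\,dt$ into $u^{-1}\,du$, the inner integral is bounded by $\|t^{-1/q} f^{(k)}\|_{q,I}^q$ uniformly in $s$, and the outer integration against $s^k\,ds$ produces the constant $1/(k+1)$. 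Assembling the three pieces of $\leftnorm{\cdot}{s,q,I}$ yields \cref{eq:hardy-average-left-bound}.

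The main obstacle is essentially bookkeeping: each of the (up to three) ingredients of $\leftnorm{\cdot}{s,q,I}$, namely the integer Sobolev norm, the Slobodeckij seminorm, and, when $\beta q = 1$, the weighted $L^q$ piece, must be controlled by the corresponding ingredient of $\leftnorm{f}{s,q,I}$ with no cross-contamination; each case is a separate instance of the same Minkowski-plus-substitution template. An equivalent and shorter route that avoids this case analysis is to establish the integer-order estimates directly and obtain the non-integer case by real interpolation, using the interpolation identities for $W^{s,q}(I)$ and for the weighted $W_L^{s,q}(I)$ spaces that are already invoked elsewhere in the paper, e.g.\ in the proof of \cref{lem:mmr3-derivative-interp}.
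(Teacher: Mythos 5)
Your proposal is correct, and it follows the same skeleton as the paper's proof --- both arguments hinge on the derivative identity $(\mathcal{H}_L f)^{(k)}(t) = \int_0^1 s^k f^{(k)}(ts)\,ds = t^{-(k+1)}\int_0^t u^k f^{(k)}(u)\,du$, establish the integer-order bounds, and then verify that the structural conditions \cref{eq:left-space-conditions} are preserved --- but the two key estimates are carried out with different tools. For the integer-order $L^q$ bounds the paper majorizes $|(\mathcal{H}_L f)^{(k)}(t)|$ by $t^{-1}\int_0^t |f^{(k)}(u)|\,du$ and invokes Hardy's inequality, whereas you use Minkowski's integral inequality followed by the substitution $u = ts$; the two routes give the same conclusion with comparable effort (your constant $\int_0^1 s^{k-1/q}\,ds$ versus Hardy's $q/(q-1)$). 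The more substantive divergence is the fractional case: the paper obtains \cref{eq:hardy-average-bound} for non-integer $s$ by real interpolation between integer orders, while you estimate the Slobodeckij seminorm directly, with the scaling $(u,v)=(xs,ys)$ producing the factor $s^{\beta-1/q}$. Your computation there is right, and it buys a self-contained, interpolation-free proof with explicit constants; the paper's version is shorter on the page because it reuses interpolation identities needed elsewhere anyway. For \cref{eq:hardy-average-left-bound} the two proofs are essentially identical in content: the vanishing conditions at the left endpoint transfer via $(\mathcal{H}_L f)^{(j)}(0) = f^{(j)}(0)/(j+1)$ (legitimate since $f^{(j)}$ is continuous at $0$ exactly in the range $j < s - 1/q$ where the condition is imposed), and the weighted $L^q$ piece in the case $\beta q = 1$ is handled by the same weight-preserving substitution (the paper again phrases this via Hardy). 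Your closing remark that one could instead interpolate using the $W_L$ interpolation identity is exactly the trade-off the paper makes for the unweighted part, so both endpoints of that trade-off are sound.
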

	\begin{proof}
		Let $f \in C^{\infty}(\bar{\unitint})$ and $1 < q < \infty$. Thanks to \cite[Lemma 3.1 eq. (3.3)]{AinCP19Extension},
		\begin{align}
			\label{eq:proof:hardy-derivative-form}
			(\mathcal{H}_L f)^{(n)}(t) = t^{-(n+1)} \int_{0}^{t} u^{n} f^{(n)}(u) \ du = \int_{0}^{1} u^n f^{(n)}(ut) \ du \qquad \forall n \in \mathbb{N}_0.
		\end{align}
		Applying Hardy's inequality \cite[Theorem 327]{Hardy52} gives
		\begin{align*}
			\| (\mathcal{H}_L f)^{(n)}\|_{q, \unitint}^q \leq \int_{\unitint} \left( t^{-1} \int_{0}^{t} |f^{(n)}(u)| \ du \right)^q \ dt  \leq \left( \frac{q}{q-1} \right)^q \|f^{(n)}\|_{q, \unitint}^q.
		\end{align*}
		Consequently, $\mathcal{H}_L$ is a bounded map of $W^{n, q}(\unitint)$ into $W^{n, q}(\unitint)$ for all $n \in \mathbb{N}_0$. \Cref{eq:hardy-average-bound} now follows from interpolation.
		
		Now let $f \in W_L^{s, q}(\unitint)$. Identity \cref{eq:proof:hardy-derivative-form} and inequality \cref{eq:hardy-average-bound} show that $\mathcal{H}_L f \in  W^{s, q}(\unitint)$ and $(\mathcal{H}_L f)^{(i)}(0) = 0 $ for  $0 \leq i < s - \frac{1}{q}$. Consequently, in the case $s - 1/q \notin \mathbb{Z}$, $\mathcal{H}_L f \in  W_L^{s, q}(\unitint)$. For $s-1/q \in \mathbb{Z}$, we set $n = \lfloor s \rfloor$ and apply Hardy's inequality \cite[Theorem 327]{Hardy52} once again:
		\begin{align*}
			\| \tau^{-s} (\mathcal{H}_L f)^{(n)} \|_{q, \unitint}^q \leq \int_{\unitint} \left( t^{-1} \int_{0}^{t} u^{-s} |f^{(n)}(u)| \ ds \right)^q \ dt \leq \left( \frac{q}{q-1} \right)^q \|\tau^{-s} f^{(n)}\|_{q, \unitint}^q.
		\end{align*} 
		\Cref{eq:hardy-average-bound} then shows that $\mathcal{H}_L$ is a bounded map of $W^{s, q}_L(\unitint)$ into $W^{s, q}_L(\unitint)$ for all $1 < q < \infty$, which completes the proof.
	\end{proof}
	
	\begin{corollary}
		\label{cor:weighted-wsp-bounds}
		Let $1 < q < \infty$. For real numbers $s > 0$, there holds:
		\begin{alignat}{2}
			\label{eq:fovert-bound}
			\| \tau^{-1} f\|_{s, q, \unitint} &\lesssim_{s, q} \|f\|_{s+1, q, \unitint} \qquad & &\forall f \in W^{s+1, q}(\unitint) \cap W^{1, q}_L(\unitint), \\
			\label{eq:fovert-left-bound}
			\leftnorm{\tau^{-1} f}{s, q, \unitint} &\lesssim_{s, q} \leftnorm{f}{s+1, q, \unitint} \qquad & & \forall f \in W^{s+1, q}_{L}(\unitint).
		\end{alignat}
		Additionally, for all real $0 < \beta < 1$, there holds
		\begin{alignat}{2}
			\label{eq:fovertkp12-bound}
			\| \tau^{-\beta} f\|_{q, \unitint} &\lesssim_{\beta, q} \leftnorm{f}{\beta, q, \unitint} \qquad & &\forall f \in W^{\beta, q}_L(\unitint).  
		\end{alignat}
	\end{corollary}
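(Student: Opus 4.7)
The plan is to derive (1) and (2) from the $\mathcal{H}_L$-stability estimates of the previous lemma via the identity $\tau^{-1} f = \mathcal{H}_L(f')$, and to obtain (3) from a classical one-dimensional fractional Hardy inequality, with the borderline case $\beta q = 1$ handled directly by the definition of the left-norm.

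For (1) and (2), the key observation is that $f \in W_L^{1,q}(\unitint)$ forces $f(0)=0$ (since $q>1$), so the fundamental theorem of calculus gives $\tau^{-1} f(t) = t^{-1} \int_0^t f'(u)\,du = \mathcal{H}_L(f')(t)$. For (1), $f' \in W^{s,q}(\unitint)$ with $\|f'\|_{s,q,\unitint} \leq \|f\|_{s+1,q,\unitint}$, and \cref{eq:hardy-average-bound} immediately yields \cref{eq:fovert-bound}. For (2), I would first verify that differentiation sends $W_L^{s+1,q}(\unitint)$ continuously into $W_L^{s,q}(\unitint)$: writing $s+1 = K + B$ and $s = (K-1)+B$ with $K \geq 1$, the vanishing condition $\partial_t^j f(0)=0$ for $0 \leq j < s+1-\tfrac{1}{q}$ reindexes by one into $\partial_t^j f'(0)=0$ for $0 \leq j < s-\tfrac{1}{q}$, and the weighted correction $\|\tau^{-1/q} \partial_t^K f\|_{q,\unitint}$ (present when $Bq = 1$) appears verbatim in the left-norm of $f'$. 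Applying \cref{eq:hardy-average-left-bound} to $f'$ then produces \cref{eq:fovert-left-bound}.

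For (3), the case $\beta q = 1$ is immediate from \cref{eq:left-norm-def-gamma}, since $\|\tau^{-\beta} f\|_{q,\unitint} = \|\tau^{-1/q} f\|_{q,\unitint}$ is already contained in $\leftnorm{f}{\beta,q,\unitint}$. For $\beta q \neq 1$ the bound is the classical one-dimensional fractional Hardy inequality for $W^{\beta,q}(\unitint)$, where the vanishing condition $f(0)=0$ (built into $W_L^{\beta,q}$ when $\beta q > 1$ and unnecessary when $\beta q < 1$) is precisely the hypothesis required. I would derive it by extending $f$ by zero across the left endpoint: denoting the extension by $\tilde f$ on $(-\infty, 1)$, the portion of the Slobodeckij seminorm of $\tilde f$ coming from pairs $(t, s)$ with $t \in (0,1)$ and $s < 0$ evaluates to
\begin{equation*}
  \int_0^1 |f(t)|^q \int_{-\infty}^0 \frac{ds}{(t-s)^{\beta q + 1}}\, dt = \frac{1}{\beta q}\, \|\tau^{-\beta} f\|_{q,\unitint}^q,
\end{equation*}
and the classical estimate $\|\tilde f\|_{\beta,q,(-\infty,1)} \lesssim \|f\|_{\beta,q,\unitint}$ (see e.g.\ \cite{Grisvard85}) delivers \cref{eq:fovertkp12-bound}.

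The main obstacle is the fractional Hardy inequality underpinning (3) for $\beta q \neq 1$: it is not a direct consequence of the preceding lemma and must be invoked from the Slobodeckij-space literature. The rest of the corollary is essentially bookkeeping on top of the $\mathcal{H}_L$ estimates.
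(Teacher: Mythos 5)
Your treatment of \cref{eq:fovert-bound} and \cref{eq:fovert-left-bound} is correct and is exactly the paper's argument: $f(0)=0$ gives $\tau^{-1}f = \mathcal{H}_L(f')$, and the two $\mathcal{H}_L$-stability estimates \cref{eq:hardy-average-bound,eq:hardy-average-left-bound} finish the job; your check that differentiation maps $W_L^{s+1,q}(\unitint)$ into $W_L^{s,q}(\unitint)$ (index shift of the vanishing conditions, verbatim match of the weighted term) is a detail the paper leaves implicit. The case $\beta q = 1$ of \cref{eq:fovertkp12-bound} is likewise handled as in the paper, directly from the definition of the left-norm.

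The gap is in \cref{eq:fovertkp12-bound} for $\beta q \neq 1$, and your own computation exposes it: the cross-term of the Slobodeckij seminorm of the zero extension $\tilde f$ over pairs $(t,s)$ with $t \in (0,1)$, $s<0$ equals $\tfrac{1}{\beta q}\|\tau^{-\beta} f\|_{q,\unitint}^q$ exactly. Hence the ``classical estimate'' $\|\tilde f\|_{\beta,q,(-\infty,1)} \lesssim \|f\|_{\beta,q,\unitint}$ that you invoke \emph{is} the fractional Hardy inequality you are trying to prove --- the two statements are equivalent up to the identity you wrote down, and in the standard references (including Grisvard, where the zero-extension result is a corollary of Theorem 1.4.4.4) the extension bound is \emph{derived from} the Hardy inequality, not the other way around. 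As structured, the argument is circular; it reduces the claim to a citation of an equivalent statement rather than proving it. The paper supplies the missing analytic content: for $g \in C_c^\infty(\mathbb{R}_+)$ one sets $w(x) = x^{-1}\int_0^x [g(t)-g(x)]\,dt$ and writes $g = -w + \int_x^\infty y^{-1}w(y)\,dy$ (for $\beta q<1$; with $-\int_0^x y^{-1}w(y)\,dy$ when $\beta q>1$), bounds $\|\tau^{-\beta}w\|_{q,\mathbb{R}_+}$ by the Slobodeckij seminorm via H\"older, and controls the integral term by the classical integer-order Hardy inequalities of Hardy--Littlewood--P\'olya; the result on $\unitint$ then follows by a bounded extension to $\mathbb{R}_+$ and density. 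To repair your proof you would either need to reproduce such a self-contained argument or cite a source that proves the weighted bound independently of the zero-extension statement.
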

	\begin{proof}
		Let $s > 0$ and $1 < q < \infty$. \Cref{eq:fovert-bound} follows from the identity $t^{-1} f(t) = (\mathcal{H}_L f')(t)$ for $f \in W^{s+1, q}(\unitint) \cap W^{1, q}_L(\unitint)$ and \cref{eq:hardy-average-bound}. Similarly, \cref{eq:fovert-left-bound} follows from the same identity and \cref{eq:hardy-average-left-bound}.
		
		Now let $f \in W_L^{\beta, q}(\unitint)$, $0 < \beta < 1$. When $\beta q = 1$, \cref{eq:fovertkp12-bound} follows from the definition of the norm, so suppose that $\beta q  \neq 1$. \Cref{eq:fovertkp12-bound} is implicit in \cite[Theorem 1.4.4.4]{Grisvard85}, but we provide that proof here for completeness. 
		
		(1) We first show that
		\begin{align}
			\label{eq:proof:half space weighted}
			\| \tau^{-\beta} g \|_{q, \mathbb{R}_+} \lesssim_q \|g\|_{\beta, q, \mathbb{R}_+} \qquad \forall g \in W_0^{\beta, q}(\mathbb{R}_+).
		\end{align}
		By density, it suffices to consider $g \in C_c^{\infty}( \mathbb{R}_+ )$. 
		(a) Let $\beta q < 1$. Thanks to the identity
		\begin{align}
			\label{eq:proof:weird-ibp-id}
			\int_{x}^{\infty} y^{-1} g(y) \ dy 
			&= \int_{x}^{\infty} y^{-2} \int_{0}^{y} g(t) \ dt - x^{-1} \int_{0}^{x} g(t) \ dt,
		\end{align}
		which follows from integration by parts, we have
		\begin{align}
			\label{eq:proof:hardy w def}
			g(x) = -w(x) + \int_{x}^{\infty} y^{-1} w(y) \ dy, \qquad \text{where} \quad w(x) = x^{-1} \int_{0}^{x} [g(t) -g(x)] \ dt.
		\end{align}
		Using H\"{o}lder's inequality, we obtain
		\begin{align*}
			\| \tau^{-\beta} w \|_{q, \mathbb{R}_+}^q 
			&\leq \int_{0}^{\infty} x^{-\beta q - 1} \int_{0}^{x} |g(t) - g(x)|^{q} \ dt \ dt 
			\leq \int_{0}^{\infty} \int_{0}^{\infty} \frac{|g(t) - g(x)|^q}{|x-t|^{1 + \beta q}} \ dt \ dx.
		\end{align*}
		Consequently, $	\|\tau^{-\beta} w \|_{q, \mathbb{R}_+} \leq \|g\|_{\beta, q, \mathbb{R}_+}$. Hardy's inequality \cite[Theorem 330]{Hardy52} then gives
		\begin{align*}
			\int_{0}^{\infty} \left| x^{-\beta} \int_{x}^{\infty} y^{-1} w(y) \ dy \right|^q \ dx \lesssim_{q} \int_{0}^{\infty} x^{-\beta q} |w(x)|^q \ dx \leq \|g\|_{\beta, q, \mathbb{R}_+}^q.
		\end{align*}
		Consequently, \cref{eq:proof:half space weighted} holds.
		
		(b) Now assume that $\beta q > 1$. The identity $g(x) = -w(x) - \int_{0}^{x} y^{-1} w(y) \ dy$, where $w$ is defined in \cref{eq:proof:hardy w def}, may be shown similarly to the identity \cref{eq:proof:weird-ibp-id}. Applying Hardy's inequality \cite[Theorem 330]{Hardy52} once again gives
		\begin{align*}
			\int_{0}^{\infty} \left| x^{-\beta} \int_{0}^{x} y^{-1} w(y) \ dy \right|^q \ dx \lesssim_{q} \int_{0}^{\infty} x^{\beta q} |w(x)|^q \ dx \leq \|g\|_{1-\frac{1}{q}, q, \mathbb{R}_+}^q,
		\end{align*}
		and so \cref{eq:proof:half space weighted} holds.
		
		(2) Now let $f \in W_L^{\beta, q}(\unitint)$, $1 < q < \infty$, $\beta q \neq 1$. Let $\tilde{f}$ denote an extension of $f$ to $\mathbb{R}_{+}$ satisfying $\tilde{f}|_{\unitint} = f$ and $\|\tilde{f}\|_{\beta, q, \mathbb{R}_+ } \lesssim_{\beta, q} \|f\|_{\beta, \unitint}$. Many extensions are possible. For example, let $F$ denote the extension of $f$ on $(1/2, 1)$ to all of $\mathbb{R}$ using the linear extension operator of Stein \cite[Chapter 3]{Stein70} and take $\tilde{f} = F$ on $[1, \infty)$. Applying \cref{eq:proof:half space weighted} gives
		\begin{align*}
			\| \tau^{-\beta} f\|_{q, \unitint} \leq \| \tau^{-\beta} \tilde{f} \|_{q, \mathbb{R}_+} \lesssim_q \|\tilde{f}\|_{\beta, q, \mathbb{R}_+} \lesssim_{\beta, q} \|f\|_{\beta, q, \unitint},
		\end{align*}
		which completes the proof.
	\end{proof}

	\begin{lemma}
		\label{lem:left-extension-operator}
		There exists a linear operator $	\mathcal{F}_L : \bigcup_{ \substack{ s \geq 0 \\ 1 < q < \infty } } W_L^{s, q}(\unitint) \to L^{1}(\mathbb{R})$
		satisfying
		\begin{align*}
			\mathcal{F}_L(f) |_{\unitint} = f, \quad \mathcal{F}_L(f) |_{\mathbb{R}_-} = 0, \quad \text{and} \quad \|\mathcal{F}_L(f)\|_{s, q, \mathbb{R}} \lesssim_{s, q} \leftnorm{f}{s, q, \unitint} \qquad \forall f \in W_L^{s, q}(\unitint).
		\end{align*}
		Moreover, the space $C^{\infty}_c((0, 1])$ is dense in $W_L^{s, q}(\unitint)$ for all $s \geq 0$ and $1 < q < \infty$. 
	\end{lemma}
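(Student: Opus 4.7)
The plan is to define $\mathcal{F}_L$ by combining a zero extension past the left endpoint with a Stein-type extension past the right endpoint, localized with a fixed smooth cutoff. Fix $\chi \in C^\infty_c((-1, 2))$ with $\chi \equiv 1$ on $[0, 1]$, and let $E_R$ denote Stein's universal extension \cite[Chapter 3]{Stein70} applied to $f|_{(1/2, 1)}$, so that $\|E_R f\|_{s, q, \mathbb{R}} \lesssim_{s, q} \|f\|_{s, q, (1/2, 1)}$ for every $s \geq 0$ and $1 < q < \infty$. With $\tilde f$ denoting the zero extension of $f$ from $\unitint$ to $\mathbb{R}$, set
\[
	\mathcal{F}_L(f)(t) := \chi(t) \tilde f(t) + \chi(t) \mathds{1}_{(1, \infty)}(t) \, E_R(f)(t), \qquad t \in \mathbb{R}.
\]
This defines a single linear operator independent of $s$ and $q$, with $\mathcal{F}_L(f)|_{\mathbb{R}_-} \equiv 0$ and $\mathcal{F}_L(f)|_\unitint = f$ by construction.

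\textbf{Stability estimate.} The Stein contribution satisfies $\|\chi \mathds{1}_{(1,\infty)} E_R f\|_{s, q, \mathbb{R}} \lesssim_{s, q} \|f\|_{s, q, \unitint}$ by standard results. The core task is to bound $\|\chi \tilde f\|_{s, q, \mathbb{R}}$ by $\leftnorm{f}{s, q, \unitint}$. For integer $s = k \in \mathbb{N}_0$, the conditions $f^{(j)}(0) = 0$ for $0 \leq j \leq k - 1$ built into $W_L^{k, q}(\unitint)$ ensure that the distributional derivatives of $\tilde f$ up to order $k$ coincide with zero extensions of $f^{(j)}$, giving the bound immediately. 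For $s = k + \beta$ with $\beta \in (0, 1)$ and $\beta q \neq 1$, one splits the Slobodeckij seminorm of $\tilde f^{(k)}$ into contributions over $\unitint \times \unitint$, $\mathbb{R}_- \times \mathbb{R}_-$, and $\unitint \times \mathbb{R}_-$; the cross term is controlled using the vanishing derivatives of $f$ at $0$. The main obstacle is the borderline case $\beta q = 1$: after integrating $y \in \mathbb{R}_-$ out, the cross term reduces to a constant multiple of
\[
	\int_0^1 |f^{(k)}(x)|^q x^{-\beta q} \, dx = \| \tau^{-1/q} f^{(k)} \|_{q, \unitint}^q,
\]
which is exactly the additional weighted term included in the definition \cref{eq:left-norm-def-gamma} of $\leftnorm{\cdot}{s, q, \unitint}$ for precisely this purpose.

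\textbf{Density of $C^\infty_c((0, 1])$.} Given $f \in W_L^{s, q}(\unitint)$, set $g := \mathcal{F}_L(f) \in W^{s, q}(\mathbb{R})$, which vanishes on $\mathbb{R}_-$ and has compact support. Choose a cutoff $\eta_\epsilon \in C^\infty(\mathbb{R})$ equal to $0$ on $(-\infty, \epsilon/2]$ and to $1$ on $[\epsilon, \infty)$ with $|\eta_\epsilon^{(j)}| \lesssim_j \epsilon^{-j}$, and a mollifier $\phi_\delta$ supported in $(-\delta, \delta)$ with $\delta < \epsilon/4$. Setting $f_{\epsilon, \delta} := (\eta_\epsilon g) * \phi_\delta \in C^\infty(\mathbb{R})$, one has $\supp f_{\epsilon, \delta} \subset [\epsilon/4, 2 + \delta]$, so $f_{\epsilon, \delta}|_\unitint \in C^\infty_c((0, 1])$. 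Convergence in the standard part of the $W_L^{s, q}$ norm follows from classical mollification theory upon sending $\delta \to 0$ then $\epsilon \to 0$; for the critical weighted term (present only when $\beta q = 1$), splitting the difference $(f - f_{\epsilon, \delta})^{(k)}$ into a cutoff part and a mollification error part and applying the weighted Hardy-type bound \cref{eq:fovertkp12-bound} together with dominated convergence yields $\|\tau^{-1/q}(f - f_{\epsilon, \delta})^{(k)}\|_{q, \unitint} \to 0$.
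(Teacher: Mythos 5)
Your construction and the mechanism behind the stability estimate are essentially the paper's: extend by zero across $t=0$, so that the only genuinely new contribution to the $W^{s,q}(\mathbb{R})$ norm is the cross term over $\unitint\times\mathbb{R}_-$, which integrates out to a multiple of $\|\tau^{-\beta}f^{(k)}\|_{q,\unitint}^q$ and is absorbed either by the fractional Hardy inequality \cref{eq:fovertkp12-bound} (when $\beta q\neq 1$ -- note that for $\beta q<1$ this is not "vanishing derivatives at $0$" but the Slobodeckij regularity of $f^{(k)}$ doing the work) or by the extra weighted term built into \cref{eq:left-norm-def-gamma} (when $\beta q=1$); past $t=1$ one uses a Stein extension of $f|_{(1/2,1)}$. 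Your density argument takes a different route from the paper's: the paper invokes Grisvard's density theorem on the half-line, whereas you run the cutoff-plus-mollification argument by hand. That is legitimate and more self-contained, but be aware that the crux there is not the mollification but the convergence $\eta_\epsilon g\to g$ in $W^{s,q}(\mathbb{R})$, which itself rests on Hardy-type inequalities for functions vanishing on $\mathbb{R}_-$; you only invoke such a bound for the weighted term.

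One step fails as written: you cannot estimate the two summands $\chi\tilde f$ and $\chi\,\mathds{1}_{(1,\infty)}E_R(f)$ separately in $W^{s,q}(\mathbb{R})$. For $s>1/q$ and generic $f$ with $f(1)\neq 0$, each summand has a jump at $t=1$ -- the zero extension $\tilde f$ drops from $f(1^-)$ to $0$, and $\mathds{1}_{(1,\infty)}E_R(f)$ jumps from $0$ to $E_R(f)(1^+)=f(1)$ -- so neither summand belongs to $W^{s,q}(\mathbb{R})$, and both "$\|\chi\,\mathds{1}_{(1,\infty)}E_Rf\|_{s,q,\mathbb{R}}\lesssim\|f\|_{s,q,\unitint}$ by standard results" and "bound $\|\chi\tilde f\|_{s,q,\mathbb{R}}$" are claims about quantities that are infinite. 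Only the sum is regular across $t=1$, because the jumps cancel and the sum coincides with $\chi E_R(f)$ on $(1/2,\infty)$. The repair is routine: bound $\|\mathcal{F}_L(f)\|_{s,q,(-\infty,1)}$ by the zero-extension/cross-term argument, bound $\|\mathcal{F}_L(f)\|_{s,q,(1/2,\infty)}=\|\chi E_R(f)\|_{s,q,(1/2,\infty)}$ by Stein, and glue over the overlap $(1/2,1)$ -- which is in effect what the paper does by first constructing a one-sided extension $\tilde f\in W^{s,q}(\mathbb{R}_+)$ and only then setting it to zero on $\mathbb{R}_-$. As stated, though, your decomposition of the norm is not a valid argument and must be reorganized along these lines.
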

	\begin{proof}
		Let $1 < q <\infty$, $k \in \mathbb{N}_0$, $\beta \in [0, 1)$, $s = k + \beta$, and $f \in W_L^{s, q}(\unitint)$. Let $\tilde{f} \in W^{s, q}(\mathbb{R}_+)$ denote the extension of $f$ to $(0, \infty)$ in the proof of \cref{cor:weighted-wsp-bounds}. We then define $\mathcal{F}_L(f)$ by $\mathcal{F}_L(f) = 0$ on $\mathbb{R}_{-}$ and $\mathcal{F}_L(f) = \tilde{f}$ on $\mathbb{R}_{+}$. Clearly $\mathcal{F}_L$ is a linear operator and if $\beta > 0$, there holds
		\begin{align*}
			\|\mathcal{F}_L(f)\|_{s, q, \mathbb{R}}^q \lesssim_{s, q} 	\|\tilde{f}\|_{s, q, \mathbb{R}_{+}}^q +  \int_{0}^{1} \int_{-\infty}^0 \frac{|f^{(k)}(t)|^q}{ |t-u|^{\beta q+1} } \ du \ dt &\lesssim_{s, q}  \|f\|_{s, q, \unitint}^q + \|\tau^{-\beta} f^{(k)}\|_{q, \unitint}^q \\
			&\lesssim_{s, q} \leftnormsup{f}{s, q, I}{q}
		\end{align*}
		by \cref{eq:fovertkp12-bound}. The case $\beta=0$ follows analogously. Thus, the zero extension of $\tilde{f}$ to all of $\mathbb{R}$ is bounded. By \cite[Theorem 1.4.2.2]{Grisvard85}, there exists a sequence $\{ \tilde{f}_n \} \subset C^{\infty}_c(\mathbb{R}_+)$ such that $\tilde{f}_n \to \tilde{f}$ strongly in $W_L^{s, q}(\mathbb{R}_+)$. Consequently, $f_n := \tilde{f}_n$ on $\unitint$ satisfies $f_n \in C^{\infty}_c((0, 1])$ and $f_n$ converges strongly to $f$ in $W_L^{s, q}(\unitint)$. Thus,  $C^{\infty}_c((0, 1])$ is dense in $W_L^{s, q}(\unitint)$.
	\end{proof}
	
	\begin{lemma}
		\label{lem:left-zz-interpolation}
		For $n \in \mathbb{N}$, $1 < q < \infty$, and $0 < \beta < 1$ with $\beta q \neq 1$ if $q \neq 2$, there holds
		\begin{align}
			\label{eq:left-space-interpolation}
			W_L^{n+\beta-\frac{1}{q}, q}(\unitint) &= [ W_L^{n-\frac{1}{q}, q}(\unitint), W_{L}^{n+1-\frac{1}{q}, q}(\unitint) ]_{\beta, q},
		\end{align}
		with equivalent norms, where brackets indicate the real method of interpolation \cite{BerLof76}.
	\end{lemma}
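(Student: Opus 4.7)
The plan is to realize $W_L^{s, q}(\unitint)$ as a retract of a support-constrained Sobolev space on $\mathbb{R}$, reducing the interpolation identity to a classical one on the whole line. Let $Y^{s,q} := \{g \in W^{s,q}(\mathbb{R}) : g|_{\mathbb{R}_-} = 0\}$, a closed subspace of $W^{s,q}(\mathbb{R})$. By \cref{lem:left-extension-operator}, $\mathcal{F}_L$ is bounded from $W_L^{s,q}(\unitint)$ into $Y^{s,q}$ for each relevant $s$. I would first verify that the restriction $R : Y^{s,q} \to W_L^{s,q}(\unitint)$ is also bounded: the vanishing trace conditions $\partial_t^i f(0) = 0$ for $i < s - 1/q$ come directly from Sobolev embedding across $\{t = 0\}$, while the weighted term $\|\tau^{-1/q}\partial_t^k f\|_{q, \unitint}$ appearing in the critical case $\beta q = 1$ is controlled by the half-space Hardy estimate \cref{eq:proof:half space weighted} applied to $\partial_t^k g$. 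Since $R \circ \mathcal{F}_L = \mathrm{Id}$, the pair $(\mathcal{F}_L, R)$ realizes $W_L^{s,q}(\unitint)$ as a retract of $Y^{s,q}$ for each of $s \in \{n - \tfrac{1}{q},\, n + \beta - \tfrac{1}{q},\, n + 1 - \tfrac{1}{q}\}$.

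The standard retract principle for real interpolation (see e.g.\ \cite[Theorem 6.4.2]{BerLof76}) then yields, with equivalent norms,
\begin{equation*}
	[W_L^{n-\frac{1}{q},q}(\unitint), W_L^{n+1-\frac{1}{q},q}(\unitint)]_{\beta, q}
	\;=\; R\bigl([Y^{n-\frac{1}{q},q}, Y^{n+1-\frac{1}{q},q}]_{\beta, q}\bigr),
\end{equation*}
so the lemma reduces to the whole-line identity $[Y^{n-\frac{1}{q},q}, Y^{n+1-\frac{1}{q},q}]_{\beta, q} = Y^{n+\beta-\frac{1}{q},q}$. For this, I would use that, by the density assertion in \cref{lem:left-extension-operator}, $Y^{s,q}$ coincides with the closure of $C_c^{\infty}(\mathbb{R}_+)$ in $W^{s,q}(\mathbb{R})$; combining the classical interpolation identity for $W^{s,q}(\mathbb{R})$ (e.g.\ \cite[Chapter 6]{BerLof76}) with the fact that the support condition is preserved under interpolation of closed subspaces stabilized by a common retract (one-sided mollification composed with a left cutoff) gives the desired equality in the range of $s$ away from integer-trace exponents.

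The main obstacle is the sole critical case $q = 2$, $\beta = \tfrac{1}{2}$ --- the only value permitted by the hypothesis in which the interpolated exponent $n + \beta - 1/q$ falls on an integer. Here $Y^{n, 2}$ is not simply a zero-trace subspace but the Lions--Magenes-type interpolation space, whose natural norm is equivalent to $\|g\|_{n, 2, \mathbb{R}_+}^{2} + \|\tau^{-1/2}\partial^n g\|_{2, \mathbb{R}_+}^{2}$ (see e.g.\ \cite[\S 1.4.4]{Grisvard85}). Verifying that this norm agrees, after restriction through $R$, with $\leftnormsup{\cdot}{n - \frac{1}{2}, 2, \unitint}{2}$ is the delicate step; the inequality in one direction is implicit in \cref{cor:weighted-wsp-bounds}, and the reverse follows from Hardy's inequality applied to $\mathcal{F}_L(f)$ on $\mathbb{R}_+$, thereby completing the identification and hence the proof.
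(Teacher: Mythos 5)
Your overall strategy---extend by zero via $\mathcal{F}_L$, restrict back, and transfer the interpolation identity to the real line---is the same extension--restriction argument the paper's proof invokes (it defers to the proof of Brenner--Scott, Theorem 14.2.3, with $\mathcal{F}_L$ in place of the generic extension operators), but your write-up has genuine gaps at exactly the steps that carry the content. The first is the identity $[Y^{n-\frac{1}{q},q}, Y^{n+1-\frac{1}{q},q}]_{\beta,q} = Y^{n+\beta-\frac{1}{q},q}$: this is not classical and does not follow from interpolating the ambient spaces, because real interpolation does not commute with passing to closed subspaces---that failure is precisely the Lions--Magenes phenomenon you are trying to circumvent. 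To make this step rigorous you need an honest common bounded projection $P$ of $W^{s,q}(\mathbb{R})$ onto $Y^{s,q}$, valid at both endpoint exponents simultaneously, for instance $Pg = g - E(g|_{\mathbb{R}_-})$ with $E$ a Stein extension from $\mathbb{R}_-$ to $\mathbb{R}$; your parenthetical recipe ``one-sided mollification composed with a left cutoff'' is not such a projection (composing a sharp cutoff with a mollifier either reintroduces a jump at the origin, so the result leaves $W^{s,q}(\mathbb{R})$ for $s \geq 1/q$, or fails to act as the identity on $Y^{s,q}$). As written, the crucial step is asserted rather than proved.

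Second, your final paragraph locates the delicate case in the wrong place. For $q=2$, $\beta=\tfrac12$ the \emph{interpolated} space $Y^{n,2}$ is simply the closed subspace $\{g \in W^{n,2}(\mathbb{R}) : g|_{\mathbb{R}_-}=0\}$ with the plain $H^n(\mathbb{R})$ norm, and the corresponding target $W_L^{n,2}(\unitint)$ carries no weighted term: by \cref{eq:left-norm-def-gamma} the term $\|d^{-1/q}\partial_t^k f\|_{q}$ appears only when the fractional part $\gamma$ of the smoothness index satisfies $\gamma q=1$, which fails for $\gamma=0$. The norm $\|g\|_{n,2,\mathbb{R}_+}^2 + \|\tau^{-1/2}\partial^n g\|_{2,\mathbb{R}_+}^2$ that you propose to verify is strictly stronger than the $H^n$ norm on $Y^{n,2}$, so the equivalence you set out to prove is false, and you also compare it against the endpoint norm $\leftnorm{\cdot}{n-\frac12,2,\unitint}$ rather than the target norm. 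The weighted terms actually live at the \emph{endpoints} $W_L^{n\pm\frac12-0,2}(\unitint)$ when $q=2$ (where you must check that restriction $R: Y^{n\pm\frac12,2} \to W_L^{\,\cdot\,}(\unitint)$ controls $\|\tau^{-1/2}\partial_t^{k}f\|_{2,\unitint}$), and in the \emph{target} when $q\neq 2$ and $\beta\in\{2/q,\,2/q-1\}\cap(0,1)$---a case you do not mention. Relatedly, \cref{eq:proof:half space weighted}, which you cite for these weighted bounds, is established in the paper only for $\beta q\neq 1$; the bound you need at $\beta q=1$ is true, but it requires the separate (easy) observation that for $h$ vanishing on $\mathbb{R}_-$ one has $\int_0^\infty\!\int_{-\infty}^0 |h(x)|^q\,|x-y|^{-2}\,dy\,dx = \int_0^\infty |h(x)|^q x^{-1}\,dx$, which bounds the weighted term by the Gagliardo seminorm on $\mathbb{R}$.
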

	\begin{proof}
		Since $C^{\infty}_c((0, 1])$ is dense in both $	W_L^{n+\beta-\frac{1}{q}, q}(\unitint)$ and the interpolation space $[ W_L^{n-\frac{1}{q}, q}(\unitint), W_{L}^{n+1-\frac{1}{q}, q}(\unitint) ]_{\beta, q}$ by \cref{lem:left-extension-operator} and \cite[Theorem 3.4.2]{BerLof76}, it suffices to show that the $W_L^{n+\beta-\frac{1}{q}, q}(\unitint)$ norm is equivalent to the interpolation norm. This equivalence follows from exactly the same arguments as in the proof of \cite[Theorem 14.2.3]{Brenner08} replacing the operator ``$E_S$" and ``$E_G$" with $\mathcal{F}_L$ from \cref{lem:left-extension-operator} and the spaces ``$W^{k}_p(\Omega)$" and ``$W^{k+1}_p(\Omega)$" with $W_L^{n - \frac{1}{q}, q}(\unitint)$ and $W_L^{n + 1 - \frac{1}{q}, q}(\unitint)$. 
	\end{proof}
	
\bibliographystyle{siamplain}
\bibliography{references}

\end{document}